\DeclarePairedDelimiter{\ceil}{\lceil}{\rceil}
\newtheorem{theorem}{Theorem}[section]
\newtheorem{lemma}[theorem]{Lemma}
\newtheorem{proposition}[theorem]{Proposition}
\newtheorem{corollary}[theorem]{Corollary}
\newtheorem{remark}[theorem]{Remark}
\newtheorem{definition}[theorem]{Definition}
\numberwithin{equation}{section}
\newtheorem{mainthm}{Theorem}
\newcommand{\R}{{ \mathbb{R}  }}
\newcommand{\abs}[1]{\lvert#1\rvert}
\newcommand{\bke}[1]{\left( #1 \right)}
\newcommand{\norm}[1]{\left\Vert #1 \right\Vert}
\def\XXint#1#2#3{{\setbox0=\hbox{$#1{#2#3}{\int}$ }
\vcenter{\hbox{$#2#3$ }}\kern-.6\wd0}}
\def\BigRoman{\uppercase\expandafter{\romannumeral\number\count255 }}
\def\Romannumeral{\afterassignment\BigRoman\count255=}
\DeclareMathOperator *{\essosc}{ess\ osc}
\DeclareMathOperator *{\osc}{osc}
\DeclareMathOperator *{\esssup}{ess\ sup}
\DeclareMathOperator *{\essinf}{ess\ inf}
\DeclareMathOperator *{\di}{div} 
\DeclareMathOperator *{\meas}{meas}
\DeclareMathOperator *{\dist}{dist}
\DeclareMathOperator *{\data}{data}
\DeclareMathOperator *{\diam}{diam}
\DeclareMathOperator *{\loc}{loc}
\DeclareMathOperator *{\Lip}{Lip}
\DeclareMathOperator *{\Tr}{Tr}
\DeclareMathOperator *{\BMO}{BMO}
\DeclareMathOperator *{\Proj}{Proj}
\newenvironment{thm1.5}{{\par\noindent\bf
           Proof of Theorem \ref{weak-1}. }}
           {\hfill\fbox{}\par\vspace{.2cm}}
\newenvironment{thm1.6}{{\par\noindent\bf
           Proof of Theorem \ref{weak-2}. }}
           {\hfill\fbox{}\par\vspace{.2cm}}
\newenvironment{thm1.7}{{\par\noindent\bf
           Proof of Theorem \ref{weak-4}. }}
           {\hfill\fbox{}\par\vspace{.2cm}}
\newenvironment{thm1.8}{{\par\noindent\bf
           Proof of Theorem \ref{bdd weak-1}. }}
           {\hfill\fbox{}\par\vspace{.2cm}}
\newenvironment{thm1.9}{{\par\noindent\bf
           Proof of Theorem \ref{bdd weak-2}. }}
           {\hfill\fbox{}\par\vspace{.2cm}}
\newenvironment{thm1.10}{{\par\noindent\bf
           Proof of Theorem \ref{bdd weak-4}. }}
           {\hfill\fbox{}\par\vspace{.2cm}}
\begin{document}
\title[Degenerate diffusion-drift equation]{Continuity results for \\
degenerate diffusion equations with $L^{p}_t L^{q}_{x}$ drifts}

\author{Sukjung Hwang}
\address{
(S. Hwang)  Center for Mathamatical Analysis and Computation, Yonsei University}
\email{sukjung{\_}hwang@yonsei.ac.kr, sukjungh@gmail.com}
\thanks{S. Hwang is partially supported by Basic Science Research Program
        through the National Research Foundation of Korea(NRF) funded
        by the Ministry of Education, Science and Technology(2017R1D1A1B03035152).}

\author{Yuming Paul Zhang}
\address{
(Y. Zhang) Department of Mathematics \\ 
University of California   \\ 
Los Angeles\\
USA}
\email{yzhangpaul@math.ucla.edu}

\keywords{degenerate diffusion, drift equations, uniform continuity, H\"{o}lder continuity, critical bounds}

\subjclass[2010]{35B65, 35K55, 35K65}

\begin{abstract}
In this paper, we study local uniform continuity of nonnegative weak solutions to degenerate diffusion-drift equations in the form
\[
u_{t} = \Delta u^{m} + \nabla\cdot \left( B (x,t) \, u\right), \quad \text{for } m \geq 1
\]
assuming a vector field $B \in L^{p}_t L^{q}_{x}$.
{Regarding local H\"{o}lder continuity, we provide a sharp condition on $p$ and $q$, which is referred to as the subcritical region. In the critical region, the divergence-free condition is essential to providing uniform continuity which depends on the modulus continuity of $B$.} 
\end{abstract}

\maketitle

\section{Introduction}


In this paper, we concern the following form of equation:
\begin{equation}\label{PME}
u_{t} = \Delta u^{m} + \nabla\cdot \left( B (x,t) \, u\right)
\end{equation}
{for $m \geq 1$ and here $B$ is a vector field.
Our goal is to study the local continuity property of $u$, a bounded non-negative weak solution of \eqref{PME} in the domain $Q_1:=K_1\times (-1,0]$ where $K_1:=\{x\in \mathbb{R}^d,\,|x|< 1\}$, under the condition
\begin{equation}\label{B}
B \in {L^{2\hat{q}_1}_t L^{ 2\hat{q}_2}_{x}} \left( Q_1\right).
\end{equation}
for some positive constants $\hat{q}_1 > 1$ and $\hat{q}_2 > 1$.} This result handles both linear $(m=1)$ and degenerate $(m>1)$ equations under general $B$ that can depend on both spatial and time variables. Moreover, the continuity result for a linear parabolic equation can be extended to cover a uniform parabolic equation, Remark~\ref{rmk linear}.

\medskip

By rewriting $\Delta u^m$ in \eqref{PME} into the divergence form $\nabla \cdot \left(m \, u^{m-1} \nabla u\right)$, one can easily see the parabolicity $m u^{m-1}$ shows different tendencies depending on the range of $m$. If $m=1$, then \eqref{PME} has linear diffusion. When $m>1$, \eqref{PME} carries degenerate diffusion because the parabolicity tends to zero as $u$ approaches to zero. For $0<m<1$, the parabolicity tends to blow up when $u$ tends to zero and such a equation is called fast diffusion equation and has singular diffusion.

{When $B=0$, \eqref{PME} is the well-known \emph{Porous Medium Equation} { ($PME$)} which describes a number of physical applications:} fluid flow \cites{Lei, Muskat, Bouss}, heat radiations in plasma \cite{Zeldovich}, mathematical biology \cites{CalCar, Kowal}, and other fields.
For a systematic presentation of the mathematical theories, we refer to \cites{Va, DBGiVe06}. The Harnack inequalities and H\"{o}lder continuity are explained in \cite{DBGiVe06} in details following the spirit of DeGiorgi type iterations using the intrinsic geometric properties. It is already known that a bounded non-negative weak solution of \eqref{PME} with $B=0$ is locally H\"{o}lder continuous.

In case $B \neq 0$, \eqref{PME} represents a certain form of reaction-diffusion equation which corresponds to several physical phenomena: for example, local chemical reactions in which the substances are transformed into each other and diffusion which causes the substances to spread out in space \cite{KellerSegel}. Especially, the divergence-free condition is relevant for applications to incompressible fluids \cite{Osada}.

\medskip

By a scaling argument, which will be presented later, it can be shown that the following identity is crucial for all $m\geq 1$:
\begin{equation}\label{hatq12}
\frac{2}{\hat{q}_1} + \frac{d}{\hat{q}_2} = 2 - d\kappa
\end{equation}
for some $\kappa \in [0, 2/d)$.
We allow $\hat{q}_1=\infty$ and in such case, $\hat{q}_2$ becomes $\frac{d}{2-d\kappa}$.
We will refer to the case $\kappa>0$ as \textit{subcritical} and $\kappa=0$ as the \textit{critical} region.

\medskip

The properties of weak solutions of uniform parabolic diffusion-drift equations were investigated in a number of papers.
For uniformly parabolic equations in the form of
\begin{equation*}
    u_t - \di \left( A(x,t) \nabla u \right) + B(x,t) \nabla u = 0,
\end{equation*}
H\"{o}lder estimates were obtained by Ladyzhenskaya and Ural'tseva \cite{LU} for $b \in L^{q+2}_{x}$, $q>d$. In \cite[Chapter~VI]{Lieberman}, Lieberman showed Harnack's inequality under $b \in L^{p}_{t}L^{q}_{x}$ with $2/p + d/q <1$, and obviously Harnack's inequality implies H\"{o}lder continuity.
For the borderline case with $2/p + d/q =1$, under the additional condition $\di B\leq 0$, Nazarov and Ural'tzeva \cite{NU11} proved H\"{o}lder continuity of the solution. Safanov \cite{Safanov} solved the problem in the critical region of corresponding linear stationary equations. However, as far as we are aware, it is not discussed for parabolic equations with general drifts.

Aiming the same regularity, often divergence-free drift requires weaker conditions.
With divergence-free $B$,
Osada \cite{Osada} proved H\"{o}lder continuity of solutions assuming $B \in L^{\infty}_{t} (L^{\infty}_{x})^{-1}$. In \cite{Zhang}, the vector field is allowed to be much more singular than boundedness in $L^d(\mathbb{R}^d)$.
Friedlander and Vicol \cite{FV} showed H\"{o}lder continuity assuming divergence-free $B \in L_{t}^{\infty}\text{BMO}_{x}^{-1}$. Let us mention that $L^{\infty}_{t}(L^{\infty}_{x})^{-1}$, $L^{d}_{x}$, $\text{BMO}_{x}^{-1}$ are all sharing the same scaling property.
{In general, people belive that the divergence-free assumption only provides a borderline improvement \cites{SVZ}.} Seregin, Silvestre, Sverak, and Zlatos \cite{SSSZ}:  in two dimensions, even $L^1$ bound for time-independent divergence-free drift turns out to be sufficient to yield continuous solutions with a weak modulus of continuity. The authors also constructed counterexamples saying that solutions $u\in L^{\infty} \cap H^1$ with divergence-free $B\in L^{1}_{x}$ may be discontinuous in three and higher dimensions.


While for degenerate equations, there are fewer results.
In DiBenedetto's work (see \cite{DB83} and \cite[Section IV.15]{DB93}), H\"{o}lder regularity estimates hold for the following equation
\begin{equation}\label{dib equation}
    u_t - \Delta u^m + b(x,t,u,\nabla u) = 0\quad \text{ where }|b(x,t,u,\nabla u)| \leq C \left|\nabla u^m\right|^{2} + \varphi(x,t)
\end{equation}  for a function $\varphi$ bounded in some Lebesgue spaces.

On the other hand, in \cite{CHKK}, the equation \eqref{PME} for $m\geq 1$ and its regularity under the condition on $B \in L^{2\hat{q}_1}_t L^{ 2\hat{q}_2}_{x} $ arises naturally to understand the motion of swimming bacteria modeled by Keller-Segel system with porous medium type diffusion coupled to fluid equations, such as \emph{ Bacillus subtilis} \cite{Tuval}. In \cite{CHKK}, authors proved the H\"{o}lder continuity of \eqref{PME} under the condition on $B(x,t)$ and $\nabla B (x,t)$ such as $B \in L^{2\hat{q}_1}_t L^{ 2\hat{q}_2}_{x} $ and $\nabla B \in L^{\hat{q}_1}_t L^{\hat{q}_2}_{x}$ where $\hat{q}_1$ and $\hat{q}_2$ satisfies \eqref{hatq12}. 

This paper is motivated by one of recent works by Kim and Zhang \cite{KZ} studying the well-posedness and continuity of a weak solution of \eqref{PME} with $m>1$ and $B\in L_t^\infty L_x^p$. In \cite{KZ}, the authors construct counterexamples that fail the boundedness and the uniform modulus of continuity if $p\leq d$. Moreover, the boundedness of a weak solution is shown for the case $p>d$, while the H\"{o}lder continuity is only proved if $p > d + \frac{4}{d+2}$ which leaves a question mark on the H\"{o}lder continuity in the range $ d < p \leq d +\frac{4}{d+2}$. In \eqref{hatq12}, one can easily observe that $2\hat{q}_2 \in (d, \infty]$ when $\hat{q}_1 = \infty$. In this paper, we are able to extend H\"{o}lder estimate for all $p>d$ completing the sharpness by applying the same DeGiorgi-Nash-Moser iteration method with more efficient use of the Sobolev embedding inspired by \cite{DB93}.

\medskip

Now let us revisit the condition on $B$ in \eqref{B} through a scaling argument. {Suppose that $\omega > 0$ is approximately the essential oscillation of $u$ in a cylinder $Q_{\omega,r}'=K_r\times (-w^{1-m}r^2,0]$.}
Let $u_{\omega, r}(x,t): = \omega^{-1} u(rx, \omega^{1-m}r^2 t)$. This scaling preserves the zero-drift equation ($PME$) and the rescaling for the time variable is often referred to as the \emph{ intrinsic scaling} which is essential in understanding local behavior of nonlinear parabolic equations \cites{DB86, DBGiVe06}.  This $u_{\omega, r}$ solves
\[
\partial_{t} u_{\omega, r} = \Delta u_{\omega, r}^{m} + \nabla \cdot \left( B_{\omega, r} u_{\omega, r}\right).
\]
for
\[
B_{\omega, r}(x,t): = \omega^{1-m} r B(rx, \omega^{1-m}r^2 t).
\]
Then we make an observation that
\begin{align*}
\| B_{\omega, r}\|_{L_{t}^{2\hat{q}_1}L_{x}^{2\hat{q}_2}{(Q_{\omega,r}')}}
&= \left(\int_{-\omega^{1-m}r^2}^{0} \left[ \int_{K_r} B_{\omega, r}^{2\hat{q}_2} (y, s)  \,dy \right]^{\frac{2\hat{q}_1}{2\hat{q}_2}} \,ds \right)^{\frac{1}{2\hat{q}_2}} \\
&= \omega^{1-m}r\left(\int_{-1}^{0} \left[ \int_{K_1}  B^{2\hat{q}_2} (x,t)  r^{-d}\,dx\right]^{\frac{2\hat{q}_1}{2\hat{q}_2}} \omega^{m-1}r^{-2} \,dt \right)^{\frac{1}{2\hat{q}_2}}\\
& = \omega^{(1-m)\left(1-\frac{1}{2\hat{q}_1}\right)} r^{1- \frac{1}{2}\left( \frac{2}{\hat{q}_1} + \frac{d}{\hat{q}_2}\right)} \| B\|_{L_{t}^{2\hat{q}_1}L_{x}^{2\hat{q}_2}(Q_1)}.
\end{align*}
{Without loss of generality, we regard $\omega$ as a positive constant which is comparably large to $r$ (by which we mean $\omega\geq r^\epsilon$ for any small $\epsilon>0$), because otherwise we might already have good regularity of $u$ in the scale $r$.} {Now as $r\to 0$ if $\| B_{\omega, r}\|_{{L_{t}^{2\hat{q}_1}L_{x}^{2\hat{q}_2}}}\to 0$, we expect nice behaviour of the solution since the equation becomes ($PME$)  when $r=0$}. This leads to the condition
\begin{equation}\label{hatq12:si}
1- \frac{1}{2}\left( \frac{2}{\hat{q}_1} + \frac{d}{\hat{q}_2}\right) \geq 0,
\end{equation}
that matches to \eqref{hatq12}. The strict inequality is the {subcritical region} and the equality is the {critical case}.

\medskip

It turns out to be more convenient to consider {$v := u^{m}$} instead of $u$, and the equation \eqref{PME} becomes
\begin{equation}
\label{main v}
\partial_t(v^{1/m})=\Delta v+\nabla \cdot(B\,v^{1/m}).
\end{equation}
We study the continuity of $v$ a non-negative bounded weak solution of \eqref{main v} in $Q_1$ and then recover the continuity of $u$.

\medskip


Below we state two Theorems A-B that summarize our main results. The first result is given for the subcritical region.

\begin{mainthm}[Theorem~\ref{T:Holder:v}] \label{Theorem1} 
Suppose \eqref{hatq12} holds with $\kappa \in (0, d/2)$.
 Let $u$ be a non-negative bounded weak solution of \eqref{PME} with $m \geq 1$ in $Q_1$. Then $u$ is uniformly H\"{o}lder continuous in $Q_\frac{1}{2}=K_{\frac{1}{2}}\times (-\frac{1}{4},0]$.
\end{mainthm}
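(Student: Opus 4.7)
The plan is to pass to $v := u^m$, which solves \eqref{main v}, and to establish a geometric reduction of the essential oscillation
\[
\essosc_{Q'_{\omega/2,\,\lambda r}}\! v \;\leq\; (1-\eta)\,\omega
\]
on intrinsic cylinders $Q'_{\omega,r} := K_r\times(-\omega^{1-m}r^2,0]$, where $\omega$ is a positive upper bound for $\essosc_{Q'_{\omega,r}} v$ and $\lambda,\eta\in(0,1)$ depend only on the data. Standard iteration along a geometric sequence of such cylinders then yields Hölder continuity of $v$, hence of $u=v^{1/m}$. By the intrinsic rescaling $u_{\omega,r}$ given in the introduction, proving the reduction in any $Q'_{\omega,r}$ reduces to proving it in the unit cylinder $Q_1$ with a rescaled drift whose norm can be made arbitrarily small. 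This is exactly what \eqref{hatq12:si} delivers in the subcritical regime $\kappa>0$, because the scaling factor $r^{d\kappa/2}$ dominates any negative power of $\omega$ once the standing assumption $\omega\geq r^\epsilon$ is used with $\epsilon$ sufficiently small.

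The main analytic input is a pair of parabolic energy estimates for \eqref{main v}. Testing against $(v-k)_\pm \zeta^2$ with a smooth cutoff $\zeta$ yields, after integration by parts,
\[
\sup_t \int_K \Psi_\pm(v)\,\zeta^2\,dx + \iint_Q |\nabla(v-k)_\pm|^2\,\zeta^2 \;\leq\; C\!\iint_Q (v-k)_\pm^2\bigl(|\nabla\zeta|^2+|\zeta_t|\zeta\bigr) + C\!\iint_Q B^2\,v^{2/m}\,\chi_{\{(v-k)_\pm>0\}}\,\zeta^2,
\]
where $\Psi_\pm$ is the antiderivative associated with the degenerate time term $\partial_t v^{1/m}$. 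In parallel, a logarithmic test function of the form $\log^2\!\bigl[H/(H-(v-k)_+)\bigr]$ produces an expansion-of-positivity estimate that propagates largeness forward in time. The drift contribution in the last term above is controlled by Hölder's inequality in $(t,x)$ with the conjugate exponents $(\hat q_1,\hat q_2)$, combined with the sharper parabolic Sobolev embedding exploited in \cite{DB93} (rather than the cruder $L^{2(d+2)/d}$ one). The subcritical gain $r^{d\kappa/2}$ from \eqref{hatq12:si} enters the iteration precisely through this Hölder step.

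With these ingredients in place, the oscillation reduction follows from DiBenedetto's two-alternative scheme. Writing $\mu^+ = \esssup_{Q'_{\omega,r}} v$ and $\mu^- = \essinf_{Q'_{\omega,r}} v$ and fixing the level $k = \mu^-+\omega/2$: either (a) $|\{v<k\}\cap Q'_{\omega,r}| \leq \nu\,|Q'_{\omega,r}|$ for a small universal $\nu$, in which case a De Giorgi iteration based on the Caccioppoli estimate boosts this measure smallness to the pointwise bound $v\geq \mu^-+\omega/4$ on a smaller intrinsic cylinder; or (b) the opposite inequality holds, and then the logarithmic estimate transports the information that $v$ is close to $\mu^+$ onto a set of positive measure at every time slice, after which a second De Giorgi iteration gives $v\leq \mu^+-\omega/4$ on the smaller cylinder. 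Either way, the oscillation on the smaller cylinder contracts by a fixed factor, as desired. The main obstacle is to certify that, at every step of both iterations, the drift term is truly a lower-order perturbation: it contributes a factor $\omega^{(1-m)(1-1/(2\hat q_1))} r^{d\kappa/2}\,\|B\|_{L^{2\hat q_1}_t L^{2\hat q_2}_x}$, and while the exponent on $\omega$ is negative for $m>1$, the standing convention $\omega\geq r^\epsilon$ converts the product into a small positive power of $r$ for $\epsilon$ small, so the drift contribution is absorbed into the leading energy dissipation, reducing matters in the limit to the drift-free case treated in \cite{DBGiVe06}.
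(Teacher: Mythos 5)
Your proposal follows essentially the same route as the paper: pass to $v=u^m$ solving \eqref{main v}, establish Caccioppoli-type and logarithmic energy estimates in which the drift is controlled by H\"older with exponents $(2\hat q_1,2\hat q_2)$ together with the parabolic Sobolev embedding of Proposition~\ref{P:admissible}, and then run DiBenedetto's two-alternative scheme in intrinsically scaled cylinders, using the subcritical gain $r^{d\kappa/2}$ from \eqref{hatq12:si} to absorb the drift. Two points in your outline need tightening before the argument closes.

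First, in alternative (b) you pass directly from the logarithmic estimate to the second De Giorgi iteration. This cannot work as stated: the logarithmic estimate (Proposition~\ref{P:LE}, culminating in Lemma~\ref{L:EPT}) only shows that at every later time slice the set $\{v > \mu_+ - \delta_1 k\}$ has measure at most $(1-\nu_0^2/4)|K_{4\rho}|$, a \emph{fixed} fraction bounded away from $1$. A De Giorgi iteration starting from a level set of fixed positive measure does not converge; you must first make the relevant level set have arbitrarily small measure. The missing ingredient is the \emph{expansion of positivity in space}, which in the paper is Lemma~\ref{L:EPX}: it uses the isoperimetric inequality (Lemma~\ref{Iso}) applied across a dyadic sequence of shrinking levels $k_j = 2^{-j}k$, together with the energy estimate, to shrink $|\{v>\mu_+-\delta^* k\}\cap Q|/|Q|$ below any prescribed $\nu_1$. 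Only then can the second De Giorgi iteration (Proposition~\ref{L:SA}) be launched. This step is genuinely necessary, not merely a presentation choice.

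Second, the ``standing convention $\omega\geq r^\epsilon$'' cannot be assumed without loss of generality; it must become an explicit dichotomy inside the iteration, because the time length of the intrinsic cylinder $Q_{k,\rho}$ depends on $k\sim\omega$ and you have no a priori control of $\omega$ versus $\rho$. The paper handles this with the alternative conclusion \eqref{L:FA:result1} (equivalently \eqref{L:FA:result3}): if $k^{-2-\beta\frac{2(1+\kappa)}{q_1}}\rho^{d\kappa}>\delta_*^2$, then $k\lesssim \rho^{d\kappa/\gamma}$ and the oscillation is already H\"older at that scale, which feeds into Case~2 of Lemma~\ref{L:main} (one also needs Case~3 there to reduce to the uniformly parabolic situation when $\mu_{n,-}\geq k_n$). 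Without building this case into the inductive construction of the pairs $(k_n,r_n)$, the geometric decay $k_{n+1}\leq \max\{\eta k_n,\,C_1 r_n^\epsilon\}$ does not close, and one cannot conclude a power-law modulus for $v$.
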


{In particular, when $m=1$, we recover the result of Lieberman. However, our result also shows that as $m\to 1$ from the above, uniform H\"{o}lder regularity holds.}

We modify DiBenedetto's method (see \cite{DB83}) of treating $\varphi \in L_{t}^{\hat{r}}L_{x}^{\hat{q}}$ in \eqref{dib equation} to handle our drift term given $\nabla (B u)$ for $B \in L_{t}^{2\hat{q}_1}L_{x}^{2\hat{q}_2}$.
{We prove two alternatives, which mainly says that if restricting to a smaller region, either the maximum of the solution decreases or the minimum increases depending on its local average. From this, we derive the reduction of the oscillation of the solution which, after iterations, leads to a uniform continuity property of the solution.
The hard part we need to overcome while practicing this argument is to control the effects coming from the unbounded drift with the help of its norm in the subcritical spaces.}

\medskip

Recall Theorem~1.3 from \cite{KZ}. We have
\begin{theorem}\label{thm Ac}
 For $d=2,3$,
there is a sequence of divergence-free vector fields $\{B_n\}_n$ that are uniformly bounded in $L^d(\R^d)$, and a sequence of uniformly smooth initial data $\{u_{n,0}\}_n$, such that the corresponding solutions $\{u_n\}$  of \eqref{PME} are uniformly bounded in height but not bounded in any H\"{o}lder norm in a unit parabolic neighborhood.
\end{theorem}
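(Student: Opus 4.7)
The plan is to exploit two features available precisely in the degenerate regime $m>1$: the solution has a genuine free boundary $\partial\{u>0\}$ that propagates with finite speed according to a Darcy-type law, and the norm $L^{d}(\R^d)$ is invariant under the isotropic rescaling $x\mapsto rx$. Consequently, a divergence-free drift can be concentrated on an arbitrarily small scale $r_n\to 0$ without inflating $\|B_n\|_{L^d}$, while still being strong enough to deform the free boundary on that same scale. This is the mechanism that separates $m>1$ from the linear case, where $L^d$ drifts already give H\"older regularity.

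First I would fix a smooth radial initial datum $u_0$ (e.g.\ a truncated Barenblatt profile) whose pressure $v_0=\frac{m}{m-1}u_0^{m-1}$ is approximately linear near its free boundary, so that the unperturbed free boundary moves with an explicit normal velocity. Near a chosen point $x_\ast$ on the free boundary I would place $B_n(x,t)=\chi_n(t)\,\beta((x-x_\ast)/r_n)$, where $\beta$ is a fixed smooth, compactly supported, divergence-free bump producing an inward push, constructed from a stream function when $d=2$ and a vector potential when $d=3$. Scale invariance yields $\|B_n\|_{L^d_x}=\|\beta\|_{L^d_x}$ uniformly in $n$, and the time window $\chi_n$ is calibrated so that in the intrinsic time $\sim r_n^2\omega^{1-m}$ of (PME) on scale $r_n$, the advective displacement induced by $B_n$ is comparable to $r_n$, imprinting a cusp or corner on the free boundary at scale $r_n$.

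The verification splits into two parts. The uniform $L^\infty$ bound follows from the standard Kruzhkov/DeGiorgi energy inequality applied to $(u_n-k)_+$: since $\nabla\cdot B_n=0$, the drift contribution integrates by parts into the boundary term on the truncation level set and does not destabilize the iteration, giving $\|u_n\|_\infty\lesssim \|u_{n,0}\|_\infty$ uniformly in $n$. The H\"older breakdown is proved by constructing explicit radial sub- and super-solutions (the unperturbed Barenblatt profile and a translated/indented variant) that trap $u_n$ on the scale $r_n$; from the resulting geometry one reads off $\operatorname*{osc}_{B(x_\ast,r_n)} u_n\geq c_0>0$ uniformly in $n$, hence $[u_n]_{C^\alpha}\gtrsim c_0\,r_n^{-\alpha}\to\infty$ for every $\alpha>0$.

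The main obstacle is the barrier step: one must rigorously trap the free boundary of $u_n$ under the combined effect of degenerate diffusion (which smooths) and a singular drift (which concentrates), and in particular show that the cusp persists after the drift is switched off. I would address this by working in pressure variables where the free boundary equation is more tractable, by first regularizing the drift (convolution with a mollifier) to obtain classical smooth solutions whose free boundaries can be tracked, and then passing to the limit using $L^1$ contraction/stability for weak solutions of \eqref{PME} with divergence-free drift. The restriction $d\in\{2,3\}$ enters only through the explicit construction of the divergence-free bump $\beta$ with sufficient pointwise strength to overcome the internal pressure gradient on the relevant scale; in higher dimensions the same heuristic works but the bump must be constructed differently, which is outside the scope here.
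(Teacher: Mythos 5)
This theorem is not proved in the paper you are reading: it is Theorem~1.3 of Kim--Zhang \cite{KZ}, restated verbatim ("Recall Theorem~1.3 from \cite{KZ}") and invoked as a black box to argue that the subcritical condition on the drift in Theorem~\ref{Theorem1} is sharp. There is therefore no proof in the paper to compare your attempt against, only the external reference.

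Judged on its own, your outline identifies plausible ingredients --- the scale invariance of $\|B\|_{L^d}$ under $B\mapsto \lambda B(\lambda\,\cdot)$, the free-boundary and finite-speed-of-propagation structure available only for $m>1$, and concentration of a divergence-free field near a free-boundary point --- but it does not constitute a proof. Concretely: (i) as written, $B_n(x,t)=\chi_n(t)\,\beta\bigl((x-x_\ast)/r_n\bigr)$ satisfies $\|B_n\|_{L^d_x}=r_n\|\beta\|_{L^d_x}\to 0$; you need the normalization $B_n=\chi_n(t)\,r_n^{-1}\beta\bigl((x-x_\ast)/r_n\bigr)$ to hold the $L^d$ norm fixed, and that $r_n^{-1}$ amplification is precisely what makes the field strong enough to deform the solution at scale $r_n$. (ii) The step you yourself flag as "the main obstacle" --- rigorously trapping the free boundary of $u_n$ between barriers under a singular, concentrated, non-radial drift, and showing the imprinted scale-$r_n$ oscillation persists for a unit time --- is not a technicality but the entire substance of the theorem; building genuine sub- and super-solutions of $\partial_t u=\Delta u^m+\nabla\cdot(Bu)$ when $B$ is not radially aligned is delicate, and a "translated/indented Barenblatt" is not obviously a barrier. (iii) Your explanation that $d\in\{2,3\}$ enters only through the construction of the divergence-free bump $\beta$ is not convincing: compactly supported divergence-free bumps exist in every dimension (curl of a potential), so the dimensional restriction in \cite{KZ} more plausibly comes from the analytic framework (well-posedness, $L^1$-stability, and quantitative estimates for the drifted PME) that the construction sits on. (iv) That very $L^1$-contraction/stability, which you invoke to pass from mollified drifts to the singular limit, is itself a nontrivial result for PME with drift and is not established in the present paper. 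In short, the high-level heuristics are reasonable and point in the right direction, but the argument is left open at each load-bearing step.
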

Thus as for uniform H\"{o}lder regularity, the condition in Theorem \ref{Theorem1} on the drifts given by \eqref{hatq12} for $\kappa>0$ is sharp. Seeing from the theorem, the loss of the uniform H\"{o}lder regularity happens in finite time even for divergence-free drifts.

\medskip

Second, we consider divergence-free drift $B$ which lies in the critical regime and we study the uniform continuity of weak solutions of \eqref{PME}. The continuity depends on the modulus of continuity of the local $L_t^{2\hat{q}_1}L_x^{2\hat{q}_2}$ norms of $B$. Define
\begin{equation}
    \label{def varrho}
    \varrho_B(r):=\sup_{x_0,t_0}\left\|B\right\|_{L_t^{2\hat{q}_1}L_x^{2\hat{q}_2}((x_0,t_0)+Q_{r})\cap Q_1)}\quad \text{ with }Q_r:=K_{r}\times (-r^2,0].
\end{equation}
By definition, as $r\to 0$, we have $\varrho_B(r)\to 0$ and therefore $\varrho_B$ is a modulus of continuity.


Now we present our result:

\begin{mainthm}\label{Theorem2}
Results in the critical region.
  \begin{enumerate}
    \item [(a)] [Theorem~\ref{main cri}]
    Suppose $B$ is divergence-free and belongs to $L_t^{2\hat{q}_1}L_x^{2\hat{q}_2}(Q_1)$ for \eqref{hatq12} with $\kappa = 0$. Let $u$ be a non-negative bounded weak solution of \eqref{PME} with $m \geq 1$ in $Q_1$. Then $u$ is uniformly continuous in $Q_\frac{1}{2}$ depending on $m,d, \hat{q}_1, \hat{q}_2$, $\|u\|_{L^\infty(Q_1)}$ and $\varrho_B(r)$ as in \eqref{def varrho}. 
\item [(b)] [Theorem~\ref{thm:lackofcont}]
     For any fixed $m\geq 1$, there exists a sequence of vector fields $\{B_n\}_n$, which are uniformly bounded in $L^d(K_1)$ and
 \begin{equation}
     \label{mainthm B}
     \varrho_{B_n}(r)\leq \omega(r) \quad \text{for some modulus of continuity }\omega,
 \end{equation}
 along with a sequence of uniformly bounded functions $\{u_{n}\}$ in $K_1$ which are stationary solutions to \eqref{PME} with $B=B_n$ such that, they do not share any common mode of continuity.
  \end{enumerate}
\end{mainthm}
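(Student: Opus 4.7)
My plan for part (a) is to adapt the two-alternatives iteration behind Theorem A to the critical regime, where the drift norm is scale invariant and smallness must instead be extracted from the modulus $\varrho_B$. I would test equation \eqref{main v} against $\phi^{2}(v-k)_{+}$ and $\phi^{2}(k-v)_{+}$ on nested cylinders $Q_r$ inside $Q_1$, with $\phi$ a smooth spatial cutoff. The crucial new input is a divergence-free cancellation: introducing $G(v):=\int_{k}^{v}s^{1/m}\mathbf{1}_{\{s>k\}}\,ds$ one has the identity $v^{1/m}\,\nabla(v-k)_{+}=\nabla G(v)$, and
\[
\int \phi^{2}B\cdot\nabla G(v)\,dx=-2\int \phi\,G(v)\,B\cdot\nabla\phi\,dx
\]
because $\nabla\cdot B=0$. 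Since $|G(v)|\le \|v\|_{L^\infty}^{1/m}(v-k)_{+}$, the surviving drift contribution is free of $\nabla v$ and can be estimated by H\"older's inequality against $\|B\|_{L^{2\hat q_1}_t L^{2\hat q_2}_x(Q_r)}\le \varrho_B(r)$, with every Sobolev and time exponent closing exactly thanks to the critical equality $\tfrac{2}{\hat q_1}+\tfrac{d}{\hat q_2}=2$.

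With this Caccioppoli-type inequality in hand, the standard DeGiorgi machinery yields, on every cylinder where $\varrho_B(r)$ is below a threshold determined by $m,d,\hat q_i,\|u\|_{L^\infty}$, a reduction of oscillation $\osc_{Q_{\theta r}}v\le (1-\lambda)\osc_{Q_r}v+c\,\varrho_B(r)^{\gamma}$ for fixed $\theta,\lambda,\gamma\in(0,1)$. Iterating from a base scale $r_0$ small enough to meet the threshold produces a discrete recursion whose solution supplies a modulus of continuity for $v$ controlled only by $m,d,\hat q_i,\|u\|_{L^\infty}$ and $\varrho_B$. Continuity of $u=v^{1/m}$ then follows from continuity of $v$ since $m\ge 1$.

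For part (b), I would construct the counterexamples among \emph{stationary} solutions, where \eqref{PME} reduces to the elliptic identity $\Delta u^{m}+\nabla\cdot(Bu)=0$ on $K_1$. The simplest class is obtained by imposing the pointwise relation $\nabla u^{m}=-Bu$, which automatically enforces stationarity; for $m=1$ this forces $B=-\nabla\log u$ with the constraint $\nabla\cdot B=0$ amounting to $\log u$ being harmonic, and for $m>1$ one takes $u>0$ with a suitable harmonicity-type condition on $u^{m-1}$ and sets $B=-u^{-1}\nabla u^{m}$. Starting from a fixed bump profile $\eta$, I would let $u_n=1+\sum_{k=1}^{n}\varepsilon_{k}\,\eta((x-y_{k})/r_{k})$ with disjoint supports at scales $r_k\downarrow 0$, and choose amplitudes $\varepsilon_k$ so that $\sup_n\|B_n\|_{L^d(K_1)}<\infty$ and $\varrho_{B_n}(r)\le\omega(r)$ for a common modulus $\omega$. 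This balance is possible precisely because $L^d$ is the critical scale-invariant norm, so each nested bump costs the same amount in the drift norm irrespective of its scale; meanwhile the pointwise oscillation of $u_n$ at scale $r_k$ remains of order $\varepsilon_k$, so a diagonal choice of $(r_k,\varepsilon_k)$ forces $\{u_n\}$ to violate any prescribed single modulus of continuity.

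The principal difficulty in part (a) is that the divergence-free cancellation only removes the $\nabla v$ appearance of $B$ when $B$ is paired with the gradient of a function of $v$; every differentiation of the spatial cutoff $\phi$ leaves a residual drift term that in the critical regime is not automatically small by scaling, so one must verify that each such term closes with the critical Sobolev/H\"older exponents while coexisting with the degenerate weight $v^{1/m-1}$ coming from the time derivative $\partial_t v^{1/m}$. In part (b), the delicate point is to arrange the scales and amplitudes so that the uniform $L^d$ bound and common modulus $\omega$ on $B_n$ survive the superposition while the collection $\{u_n\}$ still exhibits oscillations too wild to be captured by any single modulus; this balance hinges on the scale invariance of the critical space $L^d$, which is what makes the critical regime genuinely different from the subcritical one handled in Theorem A.
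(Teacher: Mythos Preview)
Your outline for part (a) has a genuine gap. The divergence-free cancellation you propose---writing $v^{1/m}\nabla(v-k)_{+}=\nabla G(v)$ and integrating by parts to move the derivative onto $\phi$---does eliminate $\nabla v$ from the drift term, but the residual term $\int |B|\,\phi|\nabla\phi|\,(v-k)_{+}\,dx\,dt$ does not close the De~Giorgi iteration in the critical regime. After H\"older and Young, this term contributes either $\|B\|^{2}\bigl[\int |A_{k,\rho}^{+}(t)|^{q_1/q_2}dt\bigr]^{2/q_1}$ (exactly the term that fails when $\kappa=0$; see the discussion after \eqref{EE07}--\eqref{EE9}) or, if you try to absorb via Sobolev, an error of the form $C\|B\|^{2}|Q_{i}|/r^{2}$ carrying no factor of $|A_{i}^{+}|$. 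Either way the recursion for $Y_{i}$ acquires a floor of order $\varrho_{B}(r)^{2}$ and the $16^{i}$ growth prevents $Y_{\infty}=0$. The paper's trick is different: because $\nabla\cdot B=0$, one may replace $v^{1/m}$ by $(v^{1/m}-(\mu_{+}-k)^{1/m})_{+}\le (\mu_{+}-k)^{-\beta}v_{+}$ inside the drift term \emph{without} integrating by parts (Proposition~\ref{energy cri}). This keeps $v_{+}\zeta$ in the integrand and produces $C\,k^{-2\beta}\|B\|^{2}\,\|v_{+}\zeta\|_{L^{q_1}_tL^{q_2}_x}^{2}$, which by Proposition~\ref{P:admissible} is bounded by the $V^{2}$ energy and can be absorbed into the left side whenever $k^{-\beta}\|B\|_{L^{2\hat q_1}_tL^{2\hat q_2}_x(Q_{k,\rho})}\le\delta$. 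This absorption---not the removal of $\nabla v$---is what makes the iteration close and is the step your sketch is missing.

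Your construction for part (b) also fails, for a reason tied precisely to the scale invariance you invoke. For a single smooth bump $u=1+\varepsilon\,\eta((x-y)/r)$ with $B=-u^{-1}\nabla u^{m}$, one has $|B|\sim \varepsilon/r$ on a ball of radius $r$, so $\|B\|_{L^{d}(K_{r}(y))}\sim\varepsilon$ and the oscillation of $u$ at scale $r$ is also $\varepsilon$. With disjoint bumps at scales $r_{k}$ and amplitudes $\varepsilon_{k}$, the constraint $\varrho_{B_{n}}(r)\le\omega(r)$ forces $\varepsilon_{k}\le\omega(r_{k})$, and then $\omega$ itself is a common modulus of continuity for the family $\{u_{n}\}$. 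To defeat every modulus one needs oscillation of order $1$ at arbitrarily small scales while keeping $\varrho_{B}$ small there; smooth bumps cannot do this. The paper instead uses the borderline radial profile $\phi_{N}(r)=\ln\ln(1/r)$ (truncated near $0$), for which $|\nabla\phi_{N}|\sim 1/(r\ln(1/r))$ lies just in $L^{d}$, and sets $u_{N}\sim (\phi_{N}/\ln\ln N)_{+}^{1/(m-1)}$. Then $u_{N}(0)\sim 1$ while $u_{N}$ is supported in $K_{1/\ln\ln N}$, and a direct computation shows $\varrho_{\nabla\Phi_{N}}(r)$ is bounded by a fixed $\omega(r)$ independent of $N$. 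The doubly-logarithmic profile is the missing ingredient that decouples the oscillation from the local $L^{d}$ norm of the drift.
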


For the regularity result, again we follow the idea of DiBenedetto's method.
Unfortunately, when we assume $\kappa =0$, it fails the energy estimates and the proof of the alternatives developed for the subcritical regime. Indeed, the divergence-free condition on $B$ is essential to derive a proper energy estimate and new techniques based on DiBenedetto's method are used for the regularity result.

By the counterexample, we know that in the critical region, non-divergence-free condition on $B$ prevents any mode of continuity for all $m\geq  1$ even for restricted modulus of continuity of the local norm of $B$. The divergence-free condition in the theorem is sharp for linear diffusion cases. Part (b) of the theorem can be viewed as an extension of Theorem~5.2 from \cite{KZ} where \eqref{mainthm B} is not imposed.

{Finally let us mention that, for the divergence-free drift fractional diffusion equation, \cite{SVZ} discussed the loss of continuity in the supercritical region. As for our equation, the supercritical region is defined with $\kappa<0$ in \eqref{hatq12}. It is not known in such cases whether or not there exist discontinuous solutions to the equation with general drifts particularly for $m>1$, though we believe the answer is positive. And of course, for divergence-free drifts, the continuity problem in the supercritical region is open.}

\medskip

This paper is composed of the following sections. Section~\ref{Preliminaries} is for notations and known theories and inequalities. Next Section~\ref{S:Energy} is to deliver all energy estimates for both subcritical and critical regimes, which are the starting points to pursue the continuity results in later sections.  Section~\ref{S:subcritical} is devoted to deliver H\"{o}lder continuity in the subcritical regime. The last Section~\ref{S:critical} is about the critical regime that the divergence-free condition on drifts is sharp in terms of the uniform continuity.

\section{Preliminaries}\label{Preliminaries}

\subsection{Notations and useful inequalities} In this subsection,
we introduce the notations throughout this paper and recall some
useful inequalities for our purpose.

For future use, we denote
\begin{equation}\label{q12}
q_1 = \frac{2\hat{q}_1 (1+\kappa)}{\hat{q}_1 - 1}, \quad q_2 = \frac{2\hat{q}_2 (1+\kappa)}{\hat{q}_2 - 1},
\end{equation}
which is a pair of a dual of $(2\hat{q}_1,2\hat{q}_2)$.

Let $\Omega$ be an open domain
in $\mathbb{R}^{d}$, $d\ge 1$ and $I$ a finite interval. We denote
\[
L^{p}(\Omega)=\{ f : \Omega \rightarrow \mathbb{R} \mid f \text{ is
Lebesgue measurable, } \norm{f}_{L^{p}(\Omega)}<\infty\},
\]
 where
\[
 \norm{f}_{L^{p}(\Omega)}= \bke{\
 \int_{\Omega}|f|^{p}dx}^{\frac{1}{p}}, \qquad (1 \leq p < \infty).
\]
We will write $\norm{f}_{p,\Omega}:=\norm{f}_{L^{p}(\Omega)}$, unless there
is any confusion to be expected. For $1\leq p \leq \infty$,
$W^{k,p}(\Omega)$ denotes the usual Sobolev space, i.e.,
\[
W^{k,p}(\Omega)=\{u\in L^{p}(\Omega):D^{\alpha}u\in L^{p}(\Omega), 0
\leq |\alpha|\leq k\}.
\]
We also write the mixed norm of $f$ in spatial and temporal
variables as
\[
\norm{f}_{L^q_tL^p_x(\Omega\times I)}=\norm{f}_{L^{q}_{t}(I;
L^p_x(\Omega))}=\norm{\norm{f}_{L^p_x(\Omega)}}_{L^q_t(I)}.
\]
{For $\Omega_{T} := \Omega \times [0, T]$,
we will write $\norm{f}_{p,\Omega_T}:=\norm{f}_{L_t^{p}L_x^p(\Omega_T)}$ for simplicity.}

Take $p\geq 1$ and consider
the Banach spaces
\[
V^{p}(\Omega_{T}) := L^{\infty} (0, T; L^{p}(\Omega)) \cap
L^{p}(0,T; W^{1,p}(\Omega))
\]
and
\[
V^{p}_{0}(\Omega_{T}) := L^{\infty} (0, T; L^{p}(\Omega)) \cap
L^{p}(0,T; W^{1,p}_{0}(\Omega)),
\]
both equipped with the norm $v \in V^{p}(\Omega_{T})$,
\[
\| v\|_{V^{p}(\Omega_{T})} : = \esssup_{0<t<T} \|v(\cdot, t)\|_{p,
\Omega} + \| \nabla v\|_{p, \Omega_{T}}.
\]

Note
that if $p$ is finite, both spaces are embedded in $L^{q}(\Omega_{T})$ for some $q >
p$. We denote by $C=C(\alpha,\beta,...)$ a constant depending on the
prescribed quantities $\alpha,\beta,...$, which may change from line
to line.

Now we introduce basic embedding inequalities.

\begin{theorem}\label{T:GN} (Gagliardo-Nirenberg multiplicative embedding inequality)
Let $v \in W_{0}^{1,p}(\Omega)$, $p\geq 1$. For every fixed number
$s \geq 1$ there exists a constant $C$ depending only upon $d$, $p$
and $s$ such that
\[
\| v\|_{q, \Omega} \leq C \|\nabla v\|^{\alpha}_{p, \Omega}
\|v\|^{1-\alpha}_{s,\Omega},
\]
where $\alpha \in [0,1]$, $p, q \geq 1$, are linked by
\[
\alpha = \left( \frac{1}{s} - \frac{1}{q}\right) \left(\frac{1}{d} -
\frac{1}{p} + \frac{1}{s}\right)^{-1},
\]
and their admissible range is
\[\begin{cases}
q \in [s, \infty], \ \alpha \in [0, \frac{p}{p + s(p-1)}], & \text{ if } d = 1, \\
q\in [s, \frac{dp}{d-p}], \ \alpha \in [0,1], & \text{ if } 1\leq p < d, \ s \leq \frac{dp}{d-p}, \\
q\in [\frac{dp}{d-p}, s], \ \alpha \in [0,1], & \text{ if } 1\leq p < d, \ s \geq \frac{dp}{d-p}, \\
q \in [s, \infty), \ \alpha\in [0, \frac{dp}{dp + s(p-d)}), & \text{
if } 1<d\leq p .
\end{cases}\]
\end{theorem}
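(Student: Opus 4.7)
The plan is to prove this classical interpolation inequality by combining a single application of the sharp Sobolev embedding with a Hölder interpolation between $L^s$ and the Sobolev exponent. The formula for $\alpha$ is forced by scaling: requiring $L^q$ to lie on the interpolation scale between $L^{p^*}$ (where $\tfrac{1}{p^*}=\tfrac{1}{p}-\tfrac{1}{d}$) and $L^s$ imposes $\tfrac{1}{q}=\tfrac{\alpha}{p^*}+\tfrac{1-\alpha}{s}$, and solving for $\alpha$ directly produces the expression stated in the theorem. The assumption $v\in W^{1,p}_0(\Omega)$ is used so that the Sobolev inequality is available without a zeroth-order contribution on the right-hand side.

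First, for the principal case $1\leq p<d$, I would invoke the Sobolev inequality $\|v\|_{p^*,\Omega}\leq C\|\nabla v\|_{p,\Omega}$ with $p^*=\tfrac{dp}{d-p}$. When $q$ lies between $s$ and $p^*$, Hölder's inequality yields $\|v\|_{q,\Omega}\leq \|v\|_{p^*,\Omega}^{\alpha}\|v\|_{s,\Omega}^{1-\alpha}$ for the unique $\alpha\in[0,1]$ determined by the arithmetic identity above, and substituting the Sobolev bound for the first factor closes the argument. This covers both admissible subcases (whether $s\leq p^*$ or $s\geq p^*$), with the roles of the two endpoints of the interpolation interval exchanged.

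For $p\geq d$, the scheme is the same but with $L^\infty$ replacing $L^{p^*}$. When $p>d$, Morrey's embedding combined with a dyadic rescaling argument on balls gives $\|v\|_{\infty,\Omega}\leq C\|\nabla v\|_{p,\Omega}^{\beta}\|v\|_{s,\Omega}^{1-\beta}$, and then Hölder interpolation between $L^\infty$ and $L^s$ delivers the claimed range $q\in[s,\infty)$ with $\alpha\in[0,\tfrac{dp}{dp+s(p-d)})$; the strictness at the upper endpoint of the $\alpha$-range reflects the fact that $L^\infty$ itself is not attained in the interpolation. The one-dimensional case is handled similarly, using the elementary embedding $W^{1,p}_0(\Omega)\hookrightarrow L^\infty(\Omega)$ for any $p\geq 1$.

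The substantive calculations are routine once the strategy is fixed; the main obstacle is the bookkeeping of admissible ranges of $q$ and $\alpha$ and the open-versus-closed nature of the endpoints across the four regimes. Since this is a standard result, I would refer to Nirenberg's original paper or a standard monograph such as \cite{DBGiVe06} for the full verification of these endpoint cases once the core interpolation-plus-Sobolev argument above is in place.
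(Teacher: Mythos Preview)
The paper does not prove this theorem; it is stated in the Preliminaries section as a classical result without proof, so there is no ``paper's own proof'' to compare against. Your sketch follows the standard route (Sobolev embedding to the critical exponent, then H\"older interpolation against $L^s$), which is indeed how the inequality is typically derived, and your deferral of the endpoint bookkeeping to Nirenberg's original paper or \cite{DBGiVe06} is appropriate given that the paper itself treats this as background.

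One small gap worth flagging: your treatment of the regime $p\geq d$ jumps directly to Morrey's embedding for $p>d$, but the borderline case $p=d$ (where $W^{1,d}_0$ fails to embed into $L^\infty$) is not covered by that argument. For $p=d$ one needs the Moser--Trudinger-type embedding $W^{1,d}_0\hookrightarrow L^q$ for all finite $q$, together with a quantitative form of that embedding that tracks the dependence of the constant on $q$; this is exactly the kind of endpoint detail your final paragraph defers to the literature, but it is the one case where the simple ``Sobolev plus H\"older'' scheme does not close on its own.
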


\begin{theorem}\label{T:Sobolev} (Sobolev embedding theorem) Let $p>1$. There exists a constant $C$ depending only upon $d, p$ such that for every  ,
\[
\iint_{\Omega_{T}} \left| v(x,t) \right|^{q} \,dxdt \leq C^{q}\left(\esssup_{0<t<T} \int_{\Omega} \left|
v(x,t)\right|^{p} \,dx \right)^{p/d} \left( \iint_{\Omega_{T}}
\left| \nabla v(x,t)\right|^{p}\,dxdt\right)
\]
where $q = \frac{p(d+p)}{d}$.
\end{theorem}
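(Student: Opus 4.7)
The plan is to reduce this parabolic embedding to the elliptic Gagliardo--Nirenberg inequality (Theorem~\ref{T:GN}) applied slice-by-slice in time, and then integrate. First I would fix a time $t\in(0,T)$ and apply Theorem~\ref{T:GN} to $v(\cdot,t)\in W^{1,p}_{0}(\Omega)$ with the choice $s=p$. With this choice, the interpolation exponent simplifies to
\[
\alpha=\Bigl(\tfrac{1}{p}-\tfrac{1}{q}\Bigr)\Bigl(\tfrac{1}{d}\Bigr)^{-1}=d\Bigl(\tfrac{1}{p}-\tfrac{1}{q}\Bigr),
\]
yielding the slicewise estimate $\|v(\cdot,t)\|_{q,\Omega}\le C\,\|\nabla v(\cdot,t)\|_{p,\Omega}^{\alpha}\,\|v(\cdot,t)\|_{p,\Omega}^{1-\alpha}$.

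Next, I would raise this to the $q$-th power and integrate over $t\in(0,T)$. Pulling the $L^\infty_t$ norm out of the factor $\|v(\cdot,t)\|_{p,\Omega}^{(1-\alpha)q}$ gives
\[
\iint_{\Omega_T}|v|^{q}\,dx\,dt\le C^{q}\Bigl(\esssup_{0<t<T}\|v(\cdot,t)\|_{p,\Omega}\Bigr)^{(1-\alpha)q}\int_{0}^{T}\|\nabla v(\cdot,t)\|_{p,\Omega}^{\alpha q}\,dt.
\]
The key algebraic step is to choose $q$ so that the last integral is exactly $\iint_{\Omega_T}|\nabla v|^{p}$; this forces $\alpha q=p$, i.e. $\alpha=p/q$. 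Combining with $\alpha=d(1/p-1/q)$ and solving gives $p+d=dq/p$, i.e. $q=p(d+p)/d$, and then $(1-\alpha)q=q-p=p^{2}/d$, which matches the exponent $p/d$ on the $L^p_x$-integral as written in the statement.

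Finally I would verify that the chosen pair $(q,\alpha)=\bigl(p(d+p)/d,\ p/q\bigr)$ lies in the admissible range of Theorem~\ref{T:GN} for each regime of $d$ versus $p$. For $1\le p<d$, one checks $q=p(d+p)/d<dp/(d-p)$ by comparing $(d+p)(d-p)=d^{2}-p^{2}<d^{2}$; for $1<d\le p$, one has $\alpha=p/q=p^{2}/(p(d+p))=p/(d+p)<d/p$ since $p\cdot p<d(d+p)$ fails in general, but the correct check is $p/q<d/p\Leftrightarrow q>p^{2}/d$, which holds because $q=p+p^{2}/d$; the case $d=1$ is analogous with the bound $\alpha\le 1/p$. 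With admissibility confirmed, the inequality above is exactly the claim. The main subtlety — the only non-routine part — is identifying the correct value of $q$ by matching powers so that the temporal integration collapses cleanly into the $L^p$ norm of $\nabla v$ on $\Omega_T$; once this is done the rest is bookkeeping.
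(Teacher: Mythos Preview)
Your approach is correct and is exactly the standard one: the paper states Theorem~\ref{T:Sobolev} without proof as a preliminary fact, but your slice-wise application of Theorem~\ref{T:GN} with $s=p$ followed by time integration and the choice $\alpha q=p$ is precisely the argument the paper itself uses to prove the closely related Proposition~\ref{P:admissible}. One small slip: in the admissibility check for $d\le p$ you compute $\alpha=p/q$ as $p/(d+p)$, whereas it is $d/(d+p)$; your rephrased check $q>p^{2}/d$ is correct and covers this regime, so the argument goes through.
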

By H\"{o}lder's inequality {and} Sobolev embedding theorem, we have
\begin{corollary}For $ v\in L^{\infty}(0,T; L^{p}(\Omega)) \cap L^{p}(0, T; W^{1,p}_{0}(\Omega))$, there exists $C$ depending only upon $d, p$ such that
\begin{align*}
\|v\|_{q, \Omega_{T}}
&\leq C \|v\|_{ V^{p} (\Omega_{T})}
\intertext{ and }
\| v \|^{p}_{p, \Omega_{T}} &\leq C \left| \{ |v| > 0
\}\right|^{\frac{p}{d+p}} \|v\|^{p}_{V^{p}(\Omega_{T})}.
\end{align*}
\end{corollary}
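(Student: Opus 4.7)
The plan is to deduce both bounds directly from the Sobolev embedding of Theorem~\ref{T:Sobolev} by interpolating the $L^\infty_tL^p_x$ and $L^p_tW^{1,p}_x$ norms that define $V^p(\Omega_T)$, and then using H\"older's inequality to insert the measure of the support set.

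First, I would observe that taking the $q$-th root in Theorem~\ref{T:Sobolev} gives
\[
\|v\|_{q,\Omega_T}\leq C\,\|v\|_{L^\infty_tL^p_x(\Omega_T)}^{p^2/(dq)}\,\|\nabla v\|_{p,\Omega_T}^{p/q},
\]
with $q=p(d+p)/d$. A routine check shows $p^2/(dq)=p/(d+p)$ and $p/q=d/(d+p)$, so the two exponents sum to~$1$. Since $v\in V^p_0(\Omega_T)$ and both factors are bounded by $\|v\|_{V^p(\Omega_T)}$, the trivial bound $a^\theta b^{1-\theta}\leq a+b\leq 2\|v\|_{V^p(\Omega_T)}$ (or Young's inequality) yields the first claimed estimate $\|v\|_{q,\Omega_T}\leq C\|v\|_{V^p(\Omega_T)}$.

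For the second estimate, I would write $|v|^p=|v|^p\mathbf{1}_{\{|v|>0\}}$ and apply H\"older's inequality with conjugate exponents $q/p$ and $q/(q-p)$:
\[
\|v\|_{p,\Omega_T}^{p}=\iint_{\Omega_T}|v|^p\mathbf{1}_{\{|v|>0\}}\,dx\,dt\leq \|v\|_{q,\Omega_T}^{p}\,\bigl|\{|v|>0\}\bigr|^{1-p/q}.
\]
A quick computation gives $1-p/q=p/(d+p)$, and combining with the first estimate delivers the second inequality exactly as stated.

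I do not anticipate a real obstacle here: the only content is the Gagliardo--Nirenberg-type interpolation already packaged in Theorem~\ref{T:Sobolev} and a one-line H\"older argument. The only small point to be careful about is keeping track of the arithmetic $p^2/(dq)=p/(d+p)$ and $1-p/q=p/(d+p)$, which makes the exponents match precisely so that no extra factors depending on $T$ or $|\Omega|$ appear in the constant $C$.
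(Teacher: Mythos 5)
Your proof is correct and follows exactly the route the paper intends: the corollary is stated as a direct consequence of H\"older's inequality and the Sobolev embedding of Theorem~\ref{T:Sobolev}, and your exponent bookkeeping ($p^2/(dq)=p/(d+p)$, $p/q=d/(d+p)$, $1-p/q=p/(d+p)$) and the Young-inequality step to absorb the interpolation into $\|v\|_{V^p}$ are all accurate. No gap.
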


\begin{proposition}\label{P:admissible}
There exists a constant $C$ depending only upon $d$ and $p$ such
that for every $v\in V^{p}_{0}(\Omega_{T})$,
\[
\| v \|_{L^r_tL^q_x( \Omega_{T})} \leq C \|v\|_{V^{p}(\Omega_{T})}
\]
where the numbers $q, r \geq 1$ are linked by
\[
\frac{1}{r} + \frac{d}{pq} = \frac{d}{p^2},
\]
and their admissible range is
\[\begin{cases}
q \in (p,\infty), \ r\in (p^2, \infty) & \text{ for } d = 1, \\
q \in (p, \frac{dp}{d-p}), \ r\in (p, \infty) & \text{ for } 1 < p < d, \\
q \in (p, \infty), \ r\in (\frac{p^2}{d}, \infty) & \text{ for } 1
< d \leq p .
\end{cases}\]
\end{proposition}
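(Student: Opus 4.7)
The plan is to interpolate, time-slice by time-slice, between the two ingredients that define $V^p(\Omega_T)$: the uniform-in-time control of $\|v(\cdot,t)\|_{p,\Omega}$ and the space--time $L^p$ bound on $\nabla v$. Applying the Gagliardo--Nirenberg inequality (Theorem~\ref{T:GN}) with $s=p$ yields, for a.e.\ $t\in(0,T)$,
\[
\|v(\cdot,t)\|_{q,\Omega}\le C\,\|\nabla v(\cdot,t)\|_{p,\Omega}^{\alpha}\,\|v(\cdot,t)\|_{p,\Omega}^{1-\alpha},\qquad \alpha=\frac{d(q-p)}{pq},
\]
whenever $q$ is admissible in Theorem~\ref{T:GN}. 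Since $v(\cdot,t)\in W^{1,p}_0(\Omega)$ for a.e.\ $t$, this inequality is available pointwise in $t$.

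Next I would raise this pointwise estimate to the $r$-th power, pull the $L^p_x$ factor out through its essential supremum in $t$, and integrate in time to obtain
\[
\int_0^T\|v(\cdot,t)\|_{q,\Omega}^{r}\,dt\le C\,\|v\|_{L^\infty_t L^p_x(\Omega_T)}^{(1-\alpha)r}\int_0^T\|\nabla v(\cdot,t)\|_{p,\Omega}^{\alpha r}\,dt.
\]
The decisive step is to calibrate $r$ so that $\alpha r=p$, which converts the remaining time integral into $\|\nabla v\|_{p,\Omega_T}^{p}\le\|v\|_{V^p(\Omega_T)}^{p}$. Solving $\alpha r=p$ with the above value of $\alpha$ gives $r=p^2q/[d(q-p)]$, equivalently $\tfrac{1}{r}+\tfrac{d}{pq}=\tfrac{d}{p^2}$, which is exactly the scaling relation in the statement. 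Taking the $r$-th root then produces $\|v\|_{L^r_t L^q_x(\Omega_T)}\le C\,\|v\|_{V^p(\Omega_T)}$, with $C$ depending only on $d$ and $p$ through the Gagliardo--Nirenberg constant.

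Finally I would translate the admissible range of $\alpha$ in Theorem~\ref{T:GN} with $s=p$, through the identity $r=p/\alpha$, to read off the three regimes declared in the statement. In the Sobolev range $1<p<d$, $\alpha$ sweeps $(0,1)$ as $q$ ranges over $(p,dp/(d-p))$, giving $r\in(p,\infty)$; for $d=1$, Theorem~\ref{T:GN} allows $\alpha\in[0,1/p]$ with $s=p$, which via $r=p/\alpha$ yields $r\in(p^2,\infty)$; and in the borderline case $1<d\le p$, the restriction $\alpha\in[0,d/p)$ produces $r\in(p^2/d,\infty)$. I expect no serious obstacle beyond this bookkeeping: the intervals in the statement are kept open precisely so that we remain strictly inside the Gagliardo--Nirenberg admissibility window, and the whole argument is essentially a one-line consequence of combining Theorem~\ref{T:GN} with H\"older in time.
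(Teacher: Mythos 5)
Your proof is correct and follows essentially the same route as the paper: apply Theorem~\ref{T:GN} with $s=p$ slicewise in time, pull out the $\esssup_t\|v\|_{p,\Omega}$ factor, integrate the gradient term in time, and calibrate $\alpha r=p$ to land on the stated relation. You have merely filled in the arithmetic for $\alpha=d(q-p)/(pq)$ and the translation of the admissibility windows, all of which the paper's terse proof leaves implicit.
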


\begin{proof}
Let $v\in V_{0}^{p}(\Omega_{T})$ and let $r \geq 1$ to be chosen.
From Theorem~\ref{T:GN} with $s=p$ follows that
\[
\left( \int_{0}^{T} \|v(\cdot, \tau)\|^{r}_{q, \Omega} \,d\tau\right)^{1/r}
\leq C \left( \int_{0}^{T} \|\nabla v(\cdot, \tau)\|^{\alpha r}_{p}
\,d\tau\right)^{1/r} \esssup_{ 0\leq r \leq T} \| v (\cdot,
\tau)\|^{1-\alpha}_{p, \Omega_{T}}.
\]
Choose $\alpha$ such that $\alpha r = p$.
\end{proof}

We state a lemma concerning the geometric convergence
of sequences of numbers (Refer Chapter~I in \cite{DB93}).


\begin{lemma}\label{L:iter2}
Let $\{Y_n\}$ and $\{Z_n\}$, $n=0,1,2, \ldots,$ be sequences of
nonnegative numbers, satisfying the recursive inequalities
\[\begin{cases}
Y_{n+1} \leq C b^{n} \left( Y_{n}^{1+\alpha} + Z_{n}^{1+\kappa} Y_{n}^{\alpha}\right) & \\
Z_{n+1} \leq C b^{n}\left( Y_{n} + Z_{n}^{1+\kappa} \right) &
\end{cases}\]
where $C,b>1$ and $\kappa, \alpha >0$ are given numbers. If
\[
Y_{0} + Z_{0}^{1+\kappa} \leq
(2C)^{-\frac{1+\kappa}{\sigma}}b^{-\frac{1+\kappa}{\sigma^2}}, \quad
\text{where } \sigma = \min\{\kappa, \alpha\},
\]
then $\{ Y_n \}$ and $\{ Z_n \}$ tend to zero as $n \to \infty$.
\end{lemma}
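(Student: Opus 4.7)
\textbf{Proof plan for Lemma \ref{L:iter2}.} The strategy is the standard induction argument for coupled DiBenedetto-type iteration inequalities: find a single geometric sequence $\lambda^n$ that simultaneously dominates both $Y_n$ and $Z_n^{1+\kappa}$, and check that the recursions close up provided the initial mass is small enough. Concretely, set $M := Y_0 + Z_0^{1+\kappa}$ and propose the ansatz
\[
Y_n \leq M \lambda^n, \qquad Z_n^{1+\kappa} \leq M \lambda^n, \qquad n = 0,1,2,\ldots,
\]
where $\lambda \in (0,1)$ is to be chosen. The base case $n=0$ is immediate from the definition of $M$.

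For the inductive step I would substitute the hypothesis into the two recursive inequalities and use the trivial bound $Y_n^\alpha Z_n^{1+\kappa} \leq M^{1+\alpha}\lambda^{n(1+\alpha)}$ to combine the two terms on the right of the $Y$ recursion. This yields
\[
Y_{n+1} \leq 2C\, M^{1+\alpha} \bigl(\lambda^\alpha b\bigr)^n \lambda^n,
\qquad
Z_{n+1}^{1+\kappa} \leq (2C)^{1+\kappa} M^{1+\kappa} \bigl(\lambda^\kappa b^{1+\kappa}\bigr)^n \lambda^n.
\]
The key observation is that the natural choice $\lambda := b^{-(1+\kappa)/\sigma}$, with $\sigma=\min\{\kappa,\alpha\}$, makes both growth factors harmless: since $\sigma \leq \alpha$,
\[
\lambda^\alpha b = b^{\,1-\alpha(1+\kappa)/\sigma} \leq 1,
\]
and since $\sigma \leq \kappa$,
\[
\lambda^\kappa b^{1+\kappa} = b^{(1+\kappa)(1-\kappa/\sigma)} \leq 1.
\]
Consequently, to close the induction at step $n+1$ it suffices to guarantee $2C M^\alpha \leq \lambda$ (from the $Y$-line) and $(2C)^{1+\kappa} M^\kappa \leq \lambda$ (from the $Z$-line).

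Rewriting these two inequalities gives $M \leq (2C)^{-1/\alpha} b^{-(1+\kappa)/(\alpha\sigma)}$ and $M \leq (2C)^{-(1+\kappa)/\kappa} b^{-(1+\kappa)/(\kappa\sigma)}$, respectively. Because $\sigma \leq \alpha$ and $\sigma \leq \kappa$ (and $2C, b >1$), both of these are implied by the single hypothesis $M \leq (2C)^{-(1+\kappa)/\sigma} b^{-(1+\kappa)/\sigma^2}$ stated in the lemma. Thus the induction goes through and both $Y_n \leq M\lambda^n$ and $Z_n \leq M^{1/(1+\kappa)} \lambda^{n/(1+\kappa)}$ decay geometrically to zero.

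The routine manipulations are elementary; the only subtle point — and the step I would flag as the main pitfall — is the algebraic matching of exponents. One must pick a single $\lambda$ that kills the growth of $b^n$ in both recursions simultaneously, and verify that the smallness threshold produced by the two constraints on $M$ is exactly the one packaged as $(2C)^{-(1+\kappa)/\sigma} b^{-(1+\kappa)/\sigma^2}$. Once $\lambda = b^{-(1+\kappa)/\sigma}$ is identified as the right choice via the two conditions $\lambda^\alpha b \leq 1$ and $\lambda^\kappa b^{1+\kappa} \leq 1$, everything else falls into place.
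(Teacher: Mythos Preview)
Your proof is correct. The paper does not actually prove this lemma---it merely states it and refers the reader to Chapter~I of DiBenedetto's book \cite{DB93}---so there is no in-paper argument to compare against; your induction with the single geometric majorant $M\lambda^n$ and the choice $\lambda = b^{-(1+\kappa)/\sigma}$ is exactly the standard approach one finds in that reference, and the exponent bookkeeping you flag is handled cleanly.
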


The following lemma is introduced in \cite{DBGiVe06}; it states that
if the set where $v$ is bounded away from zero occupies a sizable
portion of $K_{\rho}$, then the set where $v$ is positive cluster
about at least one point of $K_{\rho}$. Here we name the inequality
as the isoperimetric inequality.

\begin{lemma}\label{Iso} (Isoperimetric inequality)
Let $v \in W^{1,1}(K_{\rho}) \cap C(K_{\rho})$ for some
$\rho > 0$ and some $x_0 \in \mathbb{R}^{d}$ and let $k$ and $l$ be
any pair of real numbers wuch that $k < l$. Then there exists a
constant $\gamma$ depending upon $N, p$ and independent of $k, l, v,
x_0, \rho$, such that
\[
(l-k) |K_{\rho} \cap \{v > l\}| \leq \gamma
\frac{\rho^{d+1}}{|K_{\rho} \cap \{v \leq k\}|}
\int_{K_{\rho}\cap \{k< v < l\}} |Dv|\,dx .
\]
\end{lemma}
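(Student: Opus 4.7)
The plan is to reduce the statement to a weighted Poincar\'e-type inequality by means of a standard truncation. Set
\[
w := \min\{(v-k)_{+},\, l-k\} \in W^{1,1}(K_{\rho}),
\]
so that $w = 0$ on $A_k := K_{\rho}\cap\{v\le k\}$, $w = l-k$ on $K_{\rho}\cap\{v\ge l\}$, and $|Dw| = |Dv|\,\chi_{\{k<v<l\}}$. Since $w \ge (l-k)\chi_{\{v>l\}}$,
\[
(l-k)\,|K_{\rho}\cap\{v>l\}| \;\le\; \int_{K_{\rho}} w\,dx,
\]
and since $Dw$ is supported in $\{k<v<l\}$, it suffices to prove the weak Poincar\'e inequality
\[
\int_{K_{\rho}} w\,dx \;\le\; \gamma\,\frac{\rho^{d+1}}{|A_k|}\int_{K_{\rho}}|Dw|\,dx,
\]
with $\gamma = \gamma(d)$.

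To establish this, I would approximate $w$ by $C^{1}$ functions and apply the fundamental theorem of calculus along segments joining $x$ to points of $A_k$. For each $x\in K_{\rho}$ and $y\in A_k$, with $\omega := (x-y)/|x-y|$,
\[
w(x) = w(x) - w(y) \;\le\; \int_0^{|x-y|} |Dw(y + r\omega)|\,dr.
\]
Averaging in $y\in A_k$ and then passing to polar coordinates $y = x + s\nu$ centered at $x$, followed by the substitution $\sigma = s-r$ so that $z := y + r\omega = x + \sigma\nu$, the right-hand side becomes an integral of $|Dw(z)|$ weighted by the Jacobian $s^{d-1}$. Using $s \le 2\rho$ and carrying out the $s$-integration on $[\sigma,2\rho]$ yields the Riesz-potential bound
\[
|A_k|\,w(x) \;\le\; C(d)\,\rho^{d}\int_{K_{\rho}}\frac{|Dw(z)|}{|z-x|^{d-1}}\,dz.
\]
Integrating in $x\in K_{\rho}$ and applying Fubini, the inner integral $\int_{K_{\rho}} |z-x|^{1-d}\,dx \le C(d)\,\rho$ uniformly in $z$ absorbs the kernel, and the weighted Poincar\'e inequality above follows.

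The main technical point is the change of variables $(y,r)\mapsto z = y + r\omega$ inside the double integral: it must convert the apparent $r=0$ singularity into the integrable kernel $|z-x|^{1-d}$ through the $s^{d-1}$ factor supplied by spherical integration around $x$, and it must keep $z$ inside the region where $Dw$ has been defined. Once that bookkeeping is in place, the remainder is Fubini together with the standard estimate $\int_{K_{\rho}}|z-x|^{1-d}\,dx\lesssim \rho$, so no property of the equation is used; the constant $\gamma$ depends only on $d$, and $k,l,v,x_0,\rho$ enter the final bound only through the quantities $(l-k)$, $|A_k|$, $|K_\rho\cap\{v>l\}|$ and $\int_{K_\rho\cap\{k<v<l\}}|Dv|\,dx$ displayed in the statement.
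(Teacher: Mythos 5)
The paper does not actually prove this lemma: it is stated as a black box, cited from DiBenedetto--Gianazza--Vespri (and it also appears as Lemma~2.2 of Chapter~I in DiBenedetto's 1993 book). Your proof is correct and is essentially the same potential-estimate argument used in that reference: you truncate, write the fundamental theorem of calculus along segments joining a generic point to the set $\{v\le k\}$ (the convexity of $K_\rho$ keeps those segments inside the ball, which is the bookkeeping point you flag), pass to polar coordinates to produce the Riesz kernel $|z-x|^{1-d}$, and close with Fubini and $\int_{K_\rho}|z-x|^{1-d}\,dx\lesssim\rho$. The only cosmetic difference from the textbook proof is that you truncate to $w=\min\{(v-k)_+,\,l-k\}$ and route the factor $l-k$ through $\int_{K_\rho} w\geq(l-k)\,|\{v>l\}|$, whereas DiBenedetto truncates to $\bar v=\max\{\min\{v,l\},k\}$ and uses the pointwise identity $\bar v(x)-\bar v(y)=l-k$ for $x\in\{v>l\}$, $y\in\{v\le k\}$; both give the same constant. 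Two minor remarks: the Sobolev density/ACL step you sketch is standard but should be stated (mollify, use that $w$ is continuous so $w_\varepsilon\to 0$ uniformly on $A_k$), and the dependence ``$N,p$'' in the lemma as printed is a typo carried over from the source --- as you observe, $\gamma$ depends only on the dimension $d$.
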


{



Here we introduce the notion of a nonnegative local weak solution of \eqref{PME} (while a global solution is defined in \cite[Definition~2.1]{KZ}):
\begin{definition}\label{D:weak_sol}
Recall $Q_1:=K_1\times (-1,0]$ and $K_1=\{|x|<1\}$. We say that a nonnegative function $u(x,t): \overline{Q}_1 \to [0, \infty)$ is a solution to \eqref{PME} in $Q_1$ if
\begin{gather*}
u \in C\left((-1, 0], L^{1}(K_1)\right) \cap L^{\infty}\left( \overline{Q}_1 \right), \\
 u^m \in L^2\left((-1, 0], W^{1,2} (K_1)\right).
\end{gather*}
And for all nonnegative test functions $\phi \in C^1_{c} \left( Q_1 \right)$, we have for any $T\in (-1,0]$
\[
\int_{-1}^{T}\int_{K_1} u \phi_t \,dx\,dt  =\int_{K_1} u (x,T) \phi(x,T) \,dx+ \int_{-1}^{T}\int_{K_1} \left[ \nabla u^m + u B \right] \nabla \phi \,dx\,dt.
\]
\end{definition}

In this paper, we only concern the local regularity of a nonnegative weak solution of \eqref{PME}.

}

\section{Energy estimate}\label{S:Energy}


{

In this section, we provide local and logarithmic type energy estimates for $v$ a non-negative bounded weak solution of \eqref{main v} that are key to prove the continuity of $v$ under the case the drift term $B$ in \eqref{hatq12} is either subcritical $\kappa \in (0, 2/d)$ or critical $\kappa = 0$ regions. When we discuss $v$ from \eqref{main v}, the constant
\begin{equation}
\beta=\frac{m-1}{m}
\end{equation}
appears in many places such as the intrinsic scaling. In the case $m=1$ of uniform parabolic equations, we have $\beta=0$.

Denote $K_\rho$ as a ball centered at the origin with radius $\rho$.
{Denote the parabolic cylinder
\begin{equation}\label{Qrho}
Q_\rho:=K_\rho\times (t_0,t_1].
\end{equation}
Let us denote the parabolic boundary of $Q_\rho$  as
\[
\partial_{p} Q_{\rho} := \partial K_{\rho} \times [t_0, t_1] \cup K_{\rho}\times \{t_0\}
\]
as the union of the lateral and initial boundaries.
}

Let
\begin{equation}\label{mu+-}
\mu_+:=\sup_{Q_\rho}v, \quad \mu_-:=\inf_{Q_\rho}v.
\end{equation}

Note that both $v-\mu_{-}$ and $\mu_{+} -v$ are non-negative, therefore, we set up the upper and lower level set $ (v - \mu_{\pm} \pm k)_{\pm} $ for a given positive constant $k\in (\mu_-,\mu_+)$.
For simplicity, let
\begin{equation}\label{v_pm}
v_{\pm} := (v - \mu_{\pm} \pm k)_{\pm}.
\end{equation}

For given positive constant $\rho$, we denote the set
\begin{equation}\label{Akrho_t}
A_{k,\rho}^{\pm}(\tau) = \{ x\in K_{\rho}: \, \left(v(x,\tau) -
\mu_{\pm} \pm k \right)_{\pm} > 0 \},
\end{equation}
that indicates a level set (either $ v < \mu_{-} + k$ or $v> \mu_{+}
- k$) at a fixed time $\tau$.
 Moreover, let
\begin{equation}\label{Akrho}
A_{k,\rho}^{\pm} = Q_{\rho} \cap \{\left(v(x,\tau) - \mu_{\pm} \pm k \right)_{\pm} > 0 \}.
\end{equation}

}

\subsection{Local energy estimate}

In this section, we deliver two local energy estimates, Proposition~\ref{P:EE} for subcritical case and Proposition~\ref{energy cri} for the critical case. Recall that $q_1,q_2$ are given by \eqref{q12}.

\begin{proposition}\label{P:EE}
{ Let $v$ be a non-negative bounded weak solution of \eqref{main v} with $m \geq 1$. Suppose that $\zeta$ is a cutoff function in $Q_{\rho}$ which vanishes on $\partial_{p} Q_{\rho}$ with $0 \leq \zeta \leq 1$. Then  there exists a constant $C= C(m)$ such that for any $k\in(\mu_-,\mu_+)$}
\begin{equation}\label{energy 1}\begin{split}
&\quad \mu_+^{-\beta}\sup_{t_0 \leq t \leq t_1}\int_{K_\rho \times \{t\}} v_+^2\zeta^2\,dx
 +  \iint_{Q_{\rho}} \abs{\nabla (v_+\,\zeta)}^2 \,dx\,dt \\
&\leq 
C(\mu_+-k)^{-\beta}\iint_{Q_{\rho}} v_{+}^{2} {|\zeta\zeta_t|} \,dx\,dt
 +C\iint_{Q_{\rho}} v_{+}^{2} |\nabla\zeta|^2\,dx\,dt \\
&\quad + C\mu_{+}^{2/m} \|B\|^{2}_{L_{t}^{2\hat{q}_1} L_{x}^{2\hat{q}_2} (Q_\rho)} \left[ \int_{t_0}^{t_1} \left[A^{+}_{k,\rho}(t)\right]^{\frac{q_1}{q_2}} \,dt\right]^{\frac{2(1+\kappa)}{q_1}},
\end{split}\end{equation}
and
\begin{equation}\label{energy 2}\begin{split}
&\quad (\mu_-+k)^{-\beta}\sup_{t_0 \leq t \leq t_1}\int_{K_\rho \times \{t\}} v_-^2\zeta^2\,dx
 +  \iint_{Q_{\rho}} |\nabla (v_-\,\zeta)|^2 \,dx\,dt \\
&\leq 
C(\mu_-+k)^{-\beta}k\iint_{Q_{\rho}} v_{-}|\zeta\zeta_t|\,dx\,dt
 +C\iint_{Q_{\rho}} v_{-}^{2}|\nabla\zeta|^2\,dx\,dt \\
&\quad +C(\mu_-+k)^{2/m} \|B\|^{2}_{L_{t}^{2\hat{q}_1} L_{x}^{2\hat{q}_2} (Q_\rho)} \left[ \int_{t_0}^{t_1} \left[A^{-}_{k,\rho}(t)\right]^{\frac{q_1}{q_2}} \,dt\right]^{\frac{2(1+\kappa)}{q_1}}.
\end{split}\end{equation}

\end{proposition}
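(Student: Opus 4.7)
The plan is to test the weak form of \eqref{main v} against $v_+\zeta^2$ for \eqref{energy 1} and against $-v_-\zeta^2$ for \eqref{energy 2}, and to handle separately the three resulting contributions coming from $\partial_t(v^{1/m})$, $\Delta v$, and $\nabla\cdot(B\,v^{1/m})$. For the time-derivative term in the upper level case I will use the standard degenerate-parabolic trick of introducing a primitive: setting
\[
G_+(v) := \int_{\mu_+-k}^{v} \tfrac{1}{m}\,s^{-\beta}(s-\mu_++k)\,ds \qquad (v\geq \mu_+-k),
\]
one has $\partial_t G_+(v)=\partial_t(v^{1/m})\,v_+$, and on the relevant range $s\in[\mu_+-k,\mu_+]$ the factor $s^{-\beta}$ is squeezed between $\mu_+^{-\beta}$ and $(\mu_+-k)^{-\beta}$, giving the two-sided comparison
\[
\tfrac{1}{2m}\mu_+^{-\beta}\,v_+^2 \;\leq\; G_+(v)\;\leq\; \tfrac{1}{2m}(\mu_+-k)^{-\beta}\,v_+^2.
\]
Integrating in time against $\zeta^2$, using $\zeta=0$ on $\partial_p Q_\rho$ to kill the initial slice, and transferring $\iint G_+\,\partial_t\zeta^2$ to the right produces both the $\mu_+^{-\beta}$ coefficient on the LHS of \eqref{energy 1} and the $(\mu_+-k)^{-\beta}$ factor on the first RHS term.

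For \eqref{energy 2} the corresponding primitive $H_-(v):=\int_v^{\mu_-+k}\tfrac1m\,s^{-\beta}(\mu_-+k-s)\,ds$ still satisfies $H_-(v)\geq\tfrac{1}{2m}(\mu_-+k)^{-\beta}v_-^2$, which delivers the coefficient on the LHS. For the matching upper bound I integrate by parts in $s$ to obtain $H_-(v)\leq v_-\bigl[(\mu_-+k)^{1/m}-v^{1/m}\bigr]$, and then split into cases: if $\mu_-\leq k$, then $(\mu_-+k)^{1/m}=(\mu_-+k)\,(\mu_-+k)^{-\beta}\leq 2k\,(\mu_-+k)^{-\beta}$ directly; if $\mu_->k$, MVT applied to $s\mapsto s^{1/m}$ with intermediate point $\xi\in(v,\mu_-+k)$ satisfies $\xi\geq\mu_->(\mu_-+k)/2$ and yields $(\mu_-+k)^{1/m}-v^{1/m}\leq C(\mu_-+k)^{-\beta}v_-$, which combined with $v_-\leq k$ (immediate from $v\geq\mu_-$) produces $H_-(v)\leq C(\mu_-+k)^{-\beta}k\,v_-$, matching the stated RHS. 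The diffusion term is routine: integration by parts with $\nabla v=\pm\nabla v_\pm$ on $\{v_\pm>0\}$, expansion of $|\nabla(v_\pm\zeta)|^2$, and Young's inequality produce the gradient term on the LHS together with a $C\iint v_\pm^2|\nabla\zeta|^2$ correction on the RHS.

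For the drift $\iint\nabla\cdot(Bv^{1/m})(\pm v_\pm\zeta^2)$, integration by parts gives $\mp\iint Bv^{1/m}\cdot\bigl[\nabla v_\pm\,\zeta^2+2v_\pm\zeta\nabla\zeta\bigr]$, and the key point is that $v^{1/m}\leq\mu_+^{1/m}$ on $\{v_+>0\}$ and $v^{1/m}\leq(\mu_-+k)^{1/m}$ on $\{v_->0\}$. Young's inequality with a small parameter absorbs the $|\nabla v_\pm|^2\zeta^2$ piece on the left and leaves, after folding the $v_\pm\zeta\nabla\zeta$ contribution into the existing $v_\pm^2|\nabla\zeta|^2$ term, a residual of the form $C\mu_+^{2/m}\iint_{A_{k,\rho}^+}|B|^2\zeta^2$ (resp.\ $C(\mu_-+k)^{2/m}\iint_{A_{k,\rho}^-}|B|^2\zeta^2$). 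To cast this into the stated form I apply H\"{o}lder in space with exponents $(\hat{q}_2,\hat{q}_2/(\hat{q}_2-1))$, yielding $\int_{A^\pm_{k,\rho}(t)}|B|^2\,dx\leq\|B(\cdot,t)\|_{L^{2\hat{q}_2}_x}^2\,|A^\pm_{k,\rho}(t)|^{(\hat{q}_2-1)/\hat{q}_2}$, and then H\"{o}lder in time with $(\hat{q}_1,\hat{q}_1/(\hat{q}_1-1))$; using \eqref{q12} one verifies $q_1/q_2=\hat{q}_1(\hat{q}_2-1)/[\hat{q}_2(\hat{q}_1-1)]$ and $2(1+\kappa)/q_1=(\hat{q}_1-1)/\hat{q}_1$, which produces precisely the exponents appearing on the RHS of \eqref{energy 1}--\eqref{energy 2}. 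The main technical obstacle is the time-derivative term in the degenerate regime $m>1$: when $\mu_-$ can be near zero the primitive $H_-$ does not admit a clean $v_-^2$ pointwise upper bound, and the integration by parts plus case-split above is exactly what extracts the $k$ factor in \eqref{energy 2}; the remaining steps are standard Young/H\"{o}lder bookkeeping calibrated against \eqref{q12}.
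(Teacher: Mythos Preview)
Your proposal is correct and follows essentially the same approach as the paper: test \eqref{main v} against $\pm v_\pm\zeta^2$, introduce the primitives $G_+$ and $H_-$ (the paper's $h^\pm$) with the same two-sided bounds, handle the diffusion term by the standard $|\nabla(v_\pm\zeta)|^2$ expansion, and treat the drift via Young's inequality followed by the two-step H\"older argument calibrated to \eqref{q12}. The only noteworthy difference is in the upper bound for $H_-$: the paper bounds $(\mu_-+k-s)_-\leq k$ and then invokes the elementary inequality $a-b\leq (a^m-b^m)/a^{m-1}$ once to obtain $H_-(v)\leq k(\mu_-+k)^{-\beta}v_-$ directly, whereas you integrate by parts and case-split on $\mu_-\lessgtr k$; both routes yield the same estimate, but the paper's single algebraic inequality avoids the case distinction.
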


\begin{proof}

We first consider the set $\{ \mu_{+} - k < v \leq \mu_{+} \}$ which is equivalent to the set $\{ v_{+} > 0\}$. From the equation of $v$ given in \eqref{main v}, we have
\begin{equation}\label{P:EE:01}
0 = \iint_{Q_\rho} \left[(v^{1/m})_t - \Delta v - \nabla (B\cdot v^{1/m})\right] v_+\zeta^2 \,dx\,dt =: I+ II+ III.
\end{equation}

First define
\begin{equation}\label{P:EE:03}\begin{split}
h^{+}(v)
&:= \int_{(\mu_{+} - k)^{\frac{1}{m}}}^{v^{\frac{1}{m}}} \left(\tilde{\sigma}^m - \mu_{+} - k \right)_{+} \,d\tilde{\sigma} \\
&= \frac{1}{m} \int_{\mu_{+} - k}^{v} \left(\sigma - \mu_{+} - k \right)_{+} \sigma^{\frac{1-m}{m}}\,d\sigma
\end{split}\end{equation}
by taking the change of variable. Recall that $\beta = \frac{m-1}{m} \geq 0$. Hence
\[ \mu_{+}^{-\beta} \leq \sigma^{-\beta} \leq (\mu_{+} - k)^{-\beta} \]
which leads upper and lower bounds of $h^{+}(v)$ such that
\begin{equation}\label{P:EE:04}
  \frac{1}{2m \mu_{+}^{\beta}} \, v_{+}^2 \leq h^{+}(v) \leq \frac{1}{2m  (\mu_{+} - k)^{\beta}} \, v_{+}^2.
\end{equation}
Observe that
\begin{equation*}\begin{split}
I &= \iint_{Q_{\rho}} \left(\partial_{t} v^{\frac{1}{m}}\right) v_{+}\zeta^2 \,dx\,dt
  = \iint_{Q_{\rho}} \left( \partial_{t} h^{+}(v) \right) \zeta^2 \,dx\,dt \\
  & = \int_{K_{\rho}\times\{t_1\}} h^{+}(v)  \zeta^2 \,dx - \int_{K_{\rho}\times\{t_0\}} h^{+}(v)  \zeta^2 \,dx - 2\iint_{Q_{\rho}} h^{+}(v)  \zeta \zeta_{t} \,dx\,dt
\end{split}\end{equation*}
It follows from \eqref{P:EE:04} that
\begin{equation}
    \label{P:EE:02}I\geq \frac{1}{2m\mu_+^{\beta}}\int_{K_\rho \times \{t_1\}} v_+^2\zeta^2\,dx-
\frac{1}{m(\mu_+-k)^{\beta}}\iint_{Q_{\rho}} v_{+}^{2} |\zeta\zeta_t| \,dx\,dt.
\end{equation}

Next let us compute the followings
\begin{equation}\label{P:EE:05}
\begin{aligned}
II &= \iint \nabla v \cdot \nabla (v_+\zeta^2) \,dxdt \\
   &=\iint (\nabla (v_+\zeta)-v_+\nabla\zeta) \cdot (\nabla (v_+\zeta)+v_+\nabla\zeta )\,dxdt
    =: II_1+II_2
\end{aligned}
\end{equation}
where
\[
II_1 = \iint_{Q_{\rho}} |\nabla (v_+\,\zeta )|^2  \,dx\,dt,
\quad \text{and} \quad  II_2 =-\iint_{Q_{\rho}} v_+^2 |\nabla \zeta|^2 \,dx\,dt .
\]

Now we handle { the integral quantity that carries the drift term:}
\begin{equation}\label{P:EE:06}
\begin{aligned}
III &= \iint_{Q_\rho} v^{1/m} B \cdot \nabla (v_+\zeta^2) \,dx\,dt\\
&{=\iint_{Q_\rho} v^{1/m} B \cdot \nabla (v_+\zeta)\zeta \,dx\,dt+\iint_{Q_\rho} v^{1/m} B \cdot  (v_+\zeta)\nabla\zeta \,dx\,dt =: III_1 + III_2.}
\end{aligned}
\end{equation}
Then we observe
\[
III_1 \geq -\epsilon II_1 - \frac{1}{4\epsilon}\iint_{Q_\rho \cap \{v_+>0 \}} |B|^2 v^{2/m} \zeta^2 \,dx\,dt
\]
and
\[
III_2 \geq -\epsilon \iint_{Q_{\rho}} v_+^2 |\nabla \zeta|^2 \,dx\,dt
       -\frac{1}{4\epsilon}\iint_{Q_\rho \cap \{ v_+>0 \}} |B|^2 v^{2/m} \zeta^2 \,dx\,dt.
\]

Let us choose $\epsilon = 1/2$. Then the combination of \eqref{P:EE:01}, \eqref{P:EE:02}, \eqref{P:EE:05}, and \eqref{P:EE:06} provide the following
\begin{equation}\label{P:EE:07} \begin{split}
&\quad \frac{1}{2m\mu_+^{\beta}}\int_{K_\rho \times \{t_1\}} v_+^2\zeta^2\,dx
  + \frac{1}{2} \iint_{Q_{\rho}} \abs{\nabla (v_+\,\zeta )}^2  \,dx\,dt\\
&\leq  \frac{1}{m(\mu_+-k)^{\beta}}\iint_{Q_{\rho}} v_{+}^{2} |\zeta\zeta_t| \,dx\,dt
 + \frac{3}{2}\iint_{Q_{\rho}} v_{+}^2 |\nabla \zeta|^2 \,dxdt \\
&\quad + 2\iint_{Q_\rho \cap \{v_{+}>0 \}} |B|^2 v^{2/m} \zeta^2 \,dx\,dt.
\end{split}\end{equation}
Because $v \leq \mu_{+}$, we compute
\[
\iint_{Q_\rho \cap \{v_{+}>0 \}} |B|^2 v^{2/m} \zeta^2 \,dx\,dt
\leq \mu_{+}^{2/m} \|B\|^{2}_{L_{t}^{2\hat{q}_1} L_{x}^{2\hat{q}_2} (Q_\rho)} \left[ \int_{t_0}^{t_1} \left[A^{+}_{k,\rho}(t)\right]^{\frac{q_1}{q_2}} \,dt\right]^{\frac{2(1+\kappa)}{q_1}}
\]
from the setting of $B$ in \eqref{B} with \eqref{hatq12}. Moreover, it is easy to obtain $v_{+} \leq k$. We yield \eqref{energy 1} from \eqref{P:EE:07}.

\vskip 0.2 true in

Secondly, we consider the set $\{ \mu_{-} \leq v \leq \mu_{-} + k \}$ which is equivalent to the set $\{ v_{-} \geq 0\}$. In this case, we consider the following:
\[
0 = \iint_{Q_\rho} \left[ - (v^{1/m})_t + \Delta v + \nabla (B\cdot v^{1/m})\right] v_- \zeta^2 \,dxdt =: I+ II+ III.
\]

Now let us carry
\begin{equation}\begin{split}
I &= \iint_{Q_{\rho}} -\partial_{t}\left( v^{\frac{1}{m}}\right) v_{-}\zeta^2 \,dx\,dt = \iint_{Q_{\rho}} \left(\partial h^{-}(v)\right) \zeta^2 \,dx\,dt\\
&= \int_{K_{\rho}\times\{t_1\}} h^{-}(v) \zeta^2 \,dx - \int_{K_{\rho}\times\{t_0\}} h^{-}(v) \zeta^2 \,dx - 2\iint_{Q_{\rho}} h^{-}(v) \zeta\zeta_t \,dx\,dt
\end{split}\end{equation}
where
\begin{equation}\label{h-lower}\begin{split}
h^{-}(v) &= \int_{v^{\frac{1}{m}}}^{(\mu_{-}+k)^{\frac{1}{m}}} \left( \tilde{\sigma}^m - \mu_{-} - k\right)_{-}\,d\tilde{\sigma} \\
&= \frac{1}{m}\int_{v}^{(\mu_{-}+k)} \left( \sigma - \mu_{-} - k\right)_{-} \sigma^{-\beta}\,d\sigma.
\end{split}\end{equation}

Because $\beta \leq 0$ and $\sigma \leq \mu_{-}+k$, it is easy to have a lower bound of $h^{-}(v)$ such that
\[ h^{-}(v) \geq \frac{1}{2m (\mu_{-}+k)^{\beta}} \, v_{-}^2 . \]
To estimate an upper bound of $h^{-}(v)$, we use of $  \left( \sigma - \mu_{-} - k\right)_{-} \leq k$ so
\begin{equation}\label{h-upper}
h^{-}(v) \leq \frac{k}{m}\int_{v}^{(\mu_{-}+k)} \sigma^{-\beta}\,d\sigma
= k \left( (\mu_{-}+k)^{\frac{1}{m}} - v^{\frac{1}{m}} \right)
\leq k (\mu_{-}+k)^{\frac{1-m}{m}} v_{-}
\end{equation}
because of the following inequality:
\[a-b\leq (a^m-b^m)/a^{m-1}, \quad \text{ for all }\  a> b > 0 . \]

To handle integral terms $II$ and $III$, we follow similar technique above for the upper-level set $v_{+}$. Then we obtain
\begin{equation}\label{P:EE:08} \begin{split}
&\quad \int_{K_{\rho}\times\{t_1\}} h^{-}(v)  \zeta^2 \,dx
  + \frac{1}{2} \iint_{Q_{\rho}} \abs{\nabla (v_-\,\zeta )}^2  \,dx\,dt\\
&\leq  2\iint_{Q_{\rho}} h^{-}(v)  \zeta \zeta_{t} \,dx\,dt
 + \frac{3}{2}\iint_{Q_{\rho}} v_{-}^2 |\nabla \zeta|^2 \,dxdt \\
&\quad + 2\iint_{Q_\rho \cap \{v_{-}>0 \}} |B|^2 v^{2/m} \zeta^2 \,dx\,dt.
\end{split}\end{equation}
Note that $v_{-} \leq k$ and the upper and lower bounds of $h^{-}(v)$ (\eqref{h-upper} and \eqref{h-lower}) lead to \eqref{energy 2} from \eqref{P:EE:08}.

\end{proof}

To prove the uniform continuity of $v$ of \eqref{main v} in the critical range of the drift term in Section~\ref{S:critical}, we derive the following energy estimate:

\begin{proposition}\label{energy cri}
Suppose that $B$ is divergence-free and set $k$ to hold $ \mu_-< k< \mu_+$. Let $\zeta$ be a cutoff function on the parabolic cylinder $Q_\rho$ for $\rho \in (0, \frac{1}{2})$, vanishing on $\partial_p Q_{\rho}$ with $0 \leq \zeta \leq 1$. For a non-negative
bounded weak solution $v$ of \eqref{main v}, there exists a constant $C=C(m)$ such that
\begin{equation}\begin{split}
&\quad \mu_+^{-\beta}\sup_{t_0 \leq t \leq t_1}\int_{K_\rho \times \{t\}} v_+^2\zeta^2\,dx
 +  \iint |\nabla (v_+\,\zeta)|^2 \,dxdt \\
&\leq C\frac{k^{2}}{(\mu_+-k)^\beta}\iint_{v_+>0}|\zeta\zeta_t|\,dx+Ck^2\iint_{v_+>0} |\nabla\zeta|^2\,dxdt\\
&\quad\quad +C(\mu_+-k)^{-2\beta} \|B\|^2_{{L^{2\hat{q}_1}_t L^{ 2\hat{q}_2}_x}(Q_\rho)}
 \left\|v_+\zeta\right\|^2_{L^{q_1}_tL^{q_2}_x(Q_\rho)}.
\end{split}\end{equation}

\end{proposition}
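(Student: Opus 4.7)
The plan is to follow the structure of Proposition~\ref{P:EE}, testing the weak form of \eqref{main v} with $v_+\zeta^2$ to get $I+II+III=0$, where
\[
I=\iint (v^{1/m})_t v_+\zeta^2, \quad II=\iint\nabla v\cdot\nabla(v_+\zeta^2),\quad III=\iint v^{1/m}B\cdot\nabla(v_+\zeta^2).
\]
The time-derivative term $I$ is handled exactly as before using the primitive $h^+$ from \eqref{P:EE:03}, giving the $\mu_+^{-\beta}$ supremum on the left and a boundary term bounded by $\frac{1}{m}(\mu_+-k)^{-\beta}\iint v_+^2|\zeta\zeta_t|$. Splitting $II=\iint|\nabla(v_+\zeta)|^2-\iint v_+^2|\nabla\zeta|^2$ as in \eqref{P:EE:05} and using $v_+\le k$ pointwise produces the factor $k^2$ in front of the cutoff-derivative terms, in agreement with the claim.

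The main new step is the treatment of $III$, where the divergence-free hypothesis must replace the brutal $L^{2\hat q_1}_tL^{2\hat q_2}_x$--$L^2$ Hölder estimate used in the subcritical regime (which would no longer close in the critical case). Write $\nabla(v_+\zeta^2)=\zeta^2\nabla v_+ + 2\zeta v_+\nabla\zeta$, and introduce the primitive
\[
G(v):=\int_{\mu_+-k}^{v}\sigma^{1/m}\,d\sigma \cdot \chi_{\{v\ge\mu_+-k\}},
\]
so that $v^{1/m}\nabla v_+=\nabla G(v)$ a.e. Since $G(v)\zeta^2\in L^2_tW^{1,2}_x$ has compact spatial support and $B\in L^{2\hat q_1}_tL^{2\hat q_2}_x$, an approximation by smooth test functions justifies using $\nabla\cdot B=0$ to integrate by parts:
\[
\iint B\cdot\zeta^2\nabla G(v)\,dx\,dt = -2\iint G(v)\,\zeta\,B\cdot\nabla\zeta\,dx\,dt.
\]
Combining with the remaining piece of $III$ gives the compact identity
\[
III = 2\iint\bigl(v^{1/m}v_+-G(v)\bigr)\zeta\,B\cdot\nabla\zeta\,dx\,dt.
\]

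The key pointwise bound is on $F(v):=v^{1/m}v_+-G(v)$. A direct differentiation yields $F(\mu_+-k)=0$ and $F'(v)=\frac{1}{m}v^{1/m-1}(v-(\mu_+-k))$; since $m\ge1$ gives $v^{1/m-1}\le(\mu_+-k)^{-\beta}$ on $\{v\ge\mu_+-k\}$, integration delivers
\[
0\le F(v)\le \tfrac{1}{2m}(\mu_+-k)^{-\beta}v_+^2.
\]
Hence $|III|\le C(\mu_+-k)^{-\beta}\iint v_+^2\zeta|B||\nabla\zeta|$. A Young inequality splits this into an absorbable piece $\tfrac{1}{2}\iint v_+^2|\nabla\zeta|^2$ (handled by the $k^2|\nabla\zeta|^2$ bookkeeping) and a remainder of the form $C(\mu_+-k)^{-2\beta}\iint|B|^2(v_+\zeta)^2$. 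Finally, Hölder in $x$ with exponents $(\hat q_2,q_2/2)$ and in $t$ with $(\hat q_1,q_1/2)$—which form dual pairs exactly when $\kappa=0$ by the definition \eqref{q12}—bounds this remainder by $C(\mu_+-k)^{-2\beta}\|B\|_{L^{2\hat q_1}_tL^{2\hat q_2}_x}^2\|v_+\zeta\|_{L^{q_1}_tL^{q_2}_x}^2$, matching the claim.

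The main obstacle is the algebraic construction of $G$ and the identification of $F(v)$ with $v_+^2$ weighted by $(\mu_+-k)^{-\beta}$: this is what turns the divergence-free cancellation into a usable gain of $(\mu_+-k)^{-2\beta}$ rather than a $\mu_+^{2/m}$ factor, which is essential to keep the estimate closed in the critical regime where no small measure factor $|A_{k,\rho}^+|^{2(1+\kappa)/q_1}$ is available to absorb large powers of $\mu_+$. The analogous computation for $v_-$ is parallel and can be omitted, since the proposition is stated only for the upper truncation.
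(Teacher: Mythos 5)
Your proof is correct, and the decomposition you use to treat the drift term is genuinely different from the paper's, even though both hinge on the divergence-free hypothesis and arrive at the same pointwise gain of $(\mu_+-k)^{-\beta}$. The paper's route is to observe that since $\nabla\cdot B=0$, one may subtract the constant $(\mu_+-k)^{1/m}$ from $v^{1/m}$ inside the drift, so that
\[
III=\iint B\bigl(v^{1/m}-(\mu_+-k)^{1/m}\bigr)_+\cdot\nabla(v_+\zeta^2)\,dx\,dt,
\]
and then bound the coefficient pointwise by $(\mu_+-k)^{-\beta}v_+$ using $a-b\le(a^m-b^m)/a^{m-1}$; after splitting $\nabla(v_+\zeta^2)=\zeta\nabla(v_+\zeta)+v_+\zeta\nabla\zeta$, Young's inequality absorbs one piece into $\tfrac12\iint|\nabla(v_+\zeta)|^2$ on the left. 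Your route instead introduces the antiderivative $G$ so that $v^{1/m}\nabla v_+=\nabla G(v)$, integrates the $\zeta^2\nabla G$ piece by parts against the divergence-free field, and arrives at the compact form $III=2\iint F(v)\,\zeta\,B\cdot\nabla\zeta$ with $F(v)=v^{1/m}v_+-G(v)\le\tfrac{1}{2m}(\mu_+-k)^{-\beta}v_+^2$; this cleanly eliminates all $\nabla(v_+\zeta)$ factors from $III$, so no absorption into the gradient term on the left is needed. Both approaches are legitimate: the paper's is a one-line algebraic shift, while yours trades an extra integration by parts for a slightly tighter bookkeeping of the good term $\iint|\nabla(v_+\zeta)|^2$. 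The rest of your sketch (treatment of $I$ and $II$, the pointwise bound $v_+\le k$ producing the $k^2$ factors, and the final Hölder step with the dual pair $(\hat q_i,q_i/2)$ at $\kappa=0$) matches the paper.
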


\begin{proof}

We multiply $v_+\zeta^2$ on both sides of \eqref{main v} to get
\begin{align*}
0&=\iint_{Q_\rho}\left[  (v^{1/m})_t  - \Delta v -\nabla (B\cdot v^{1/m})\right] v_+\zeta^2 \,dxdt =: I+II+ III.
\end{align*}
Then the setting of
\[
h^{+}(v) = m^{-1}\int_{0}^{v_+} (\sigma+\mu_{+}-k)^{-\beta}\sigma \, d\sigma
\]
yields that
\[
I = \int h^{+}(v(t_1)) \zeta^2 \,dx-\int h^{+}(v(t_0)) \zeta^2 \,dx - 2 \iint h^{+}(u) \zeta \zeta_t \,dxdt .
\]
{
For $h^{+}(v)$, as before we have
\begin{equation}\label{h plus est}
\frac{1}{2m \mu_+^{\beta}}v_+^2\leq m\, h^+(v)\leq  \frac{1}{2m(\mu_+-k)^\beta}v_+.\end{equation}}
Then
\[I\geq C\mu_+^{-\beta}\int_{K_\rho\times\{t_1\}} v_+^2\zeta^2\,dx-C\frac{k^2}{(\mu_+-k)^\beta}\iint_{v_+>0}|\zeta\zeta_t|\,dxdt.\]

Next let us compute the followings
\begin{align*}
II= \iint \nabla v \cdot \nabla (v_+\zeta^2) \,dxdt=: II_1+II_2
\end{align*}
where
\[
II_1 =\iint |\nabla (v_+\,\zeta)|^2 \,dxdt,\quad II_2=-\iint |v_+\nabla\zeta|^2\,dxdt.
\]

Since $B$ is divergence-free,
\begin{align*}
III &= -\iint \nabla\cdot(B (v^{1/m}-(\mu_+-k)^{1/m} )\,  v_+\zeta^2 \,dxdt \\
&=\iint B (v^{1/m}-(\mu_+-k)^{1/m} )_+ \cdot \nabla( v_+\zeta^2) \,dxdt \\
&=\iint B (v^{1/m}-(\mu_+-k)^{1/m} )_+\zeta \cdot (\nabla( v_+\zeta)+v_+\nabla\zeta) \,dxdt \\
&\geq -\iint |B| (v^{1/m}-(\mu_+-k)^{1/m} )_+\zeta \cdot (|\nabla( v_+\zeta)|+|v_+\nabla\zeta|) \,dxdt.
\end{align*}
We use the simple fact that for any $a>b\geq 0$
\[a-b\leq (a^m-b^m)/ a^{m-1}\]
to obtain
\[(v^{1/m}-(\mu_+-k)^{1/m} )_+\leq (\mu_+-k)^{-\beta}(v-\mu_++k)_+.\]
Then we deduce that
\begin{align*}
III &\geq -(\mu_+-k)^{-\beta}\iint |B|\,v_+\,\zeta\, (|\nabla( v_+\zeta)|+|v_+\nabla\zeta|) \,dxdt\\
&\geq
-\frac{1}{2} II_1 - C(\mu_+-k)^{-2\beta}\iint |B|^2 v_+^2 \zeta^2 \,dxdt+II_2=:\frac{1}{2} II_1 + III_1+II_2.
\end{align*}

For $III_1$, we have
\begin{align*}
    III_1\geq -C(\mu_+-k)^{-2\beta}\iint |B v_+\zeta|^2 \,dxdt
    \geq -C(\mu_+-k)^{-2\beta} \|B\|^2_{{L^{2\hat{q}_1}_t L^{ 2\hat{q}_2}_x}(Q_\rho)}
 \left\|v_+\zeta\right\|^2_{L^{q_1}_tL^{q_2}_x(Q_\rho)}.
\end{align*}
We conclude by putting the above estimates together.

\end{proof}

\subsection{Logarithmic Energy Estimate}

Now we provide a logarithmic energy estimate that is crucial to capture the behavior of a weak solution in terms of the time variable (say the expansion of positivity along the time axis), in particular, Lemma~\ref{L:EPT}. For positive conastants  $\delta$ and $k$ to be adjusted later, denote the function
\begin{equation}\label{Psi}
\Psi(v)=\Psi(v;\delta,k) = \ln^{+} \left[ \frac{k}{(1+\delta)k - (v - \mu_+ + k)_{+}} \right]
\end{equation}
for a non-negative bounded weak solution $v$ of \eqref{main v}. Then it can be observed that
\[
\Psi' (v)= \frac{\partial \Psi}{\partial v} = \frac{1}{(1+\delta)k - (v- \mu_{+} +  k)_{+}} , \ \text{ and } \
\Psi''= (\Psi')^2 .
\]
Trivially $ v \leq \mu_+$, and it follows:
\begin{equation}\label{Psi_bound}
0 \leq \Psi(v)\leq \ln \frac{1}{\delta},\quad \text{ and } \quad  0\leq \Psi'(v)\leq \frac{1}{\delta k}.
\end{equation}

\begin{proposition}\label{P:LE}
For $v$ a non-negative bounded weak solution of \eqref{main v} with $m \geq 1$, let $\Psi(v)=\Psi(v;\delta,k)$ be defined as the above. Suppose that $\zeta=\zeta(x)$ is a cutoff function in $K_{\rho}$ which is vanishing on $\partial K_\rho$ with $0 \leq \zeta \leq 1$. There exists a constant $C=C(m)$ such that for any positive constants $k\leq \mu_+$ and $\delta\leq \frac{1}{2}$, we have
\begin{equation}\label{E:LE}\begin{split}
& \int_{K_\rho \times \{ t_1\}} \Psi^2(v) \zeta^2 \,dx - \int_{K_\rho \times \{ t_0\}} \Psi^2(v) \zeta^2 \,dx
\\
&\leq C {\mu_+^\beta}\iint_{Q_{\rho}}  \Psi(v) |\nabla \zeta|^2 \,dxdt \\
& +C \ln \frac{1}{\delta} \left(\frac{\mu_{+}^{1/m}}{\delta k}+\frac{\mu_{+}^{1+1/m}}{\delta^2 k^{2}}\right) \|B\|^{2}_{L_{t}^{2\hat{q}_1} L_{x}^{2\hat{q}_2} (Q_\rho)} \left[ \int_{t_0}^{t_1} [A^{+}_{k,\rho} (t)]^{\frac{q_1}{q_2}} \,dt \right]^{\frac{2(1+\kappa)}{q_1}}.
\end{split}\end{equation}

\end{proposition}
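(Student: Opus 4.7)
The plan is to insert the test function
$$\varphi = m v^\beta (\Psi^2)'(v) \zeta^2 = 2m v^\beta \Psi(v) \Psi'(v) \zeta^2$$
into the weak form of \eqref{main v}. The factor $m v^\beta$ is chosen precisely so that the time term collapses via the chain rule
$$\partial_t v^{1/m} \cdot m v^\beta (\Psi^2)'(v) = \partial_t v \cdot (\Psi^2)'(v) = \partial_t \left[\Psi^2(v)\right],$$
which, since $\zeta = \zeta(x)$ is time-independent, integrates to exactly the left-hand side $\int \Psi^2(v)\zeta^2\,dx\big|_{t_0}^{t_1}$ of \eqref{E:LE}. Admissibility of $\varphi$ needs a standard Steklov-averaging argument, which is legitimate because $v \geq \mu_+ - k$ on the support of $\Psi'(v)$ and $\zeta$ is compactly supported in $K_\rho$.

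For the diffusion term, I would compute $\nabla\varphi$ using the identity $\Psi'' = (\Psi')^2$, which yields two non-negative gradient-squared contributions
$$2m\beta v^{\beta-1}\Psi\Psi'\zeta^2|\nabla v|^2 \quad\text{and}\quad 2m v^\beta (\Psi')^2 (1+\Psi)\zeta^2 |\nabla v|^2,$$
together with the mixed term $4m v^\beta \Psi \Psi' \zeta \nabla v \cdot \nabla \zeta$. Young's inequality applied to the mixed term, together with $\Psi^2/(1+\Psi) \leq \Psi$ and $v^\beta \leq \mu_+^\beta$, absorbs half of the second good term and leaves the contribution $C \mu_+^\beta \iint \Psi |\nabla \zeta|^2$, which is the first term on the right of \eqref{E:LE}.

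The drift contribution is where the real work lies. After integration by parts, the algebraic simplifications $v^{1/m} v^{\beta-1} = 1$ and $v^{1/m} v^\beta = v$ reduce the drift integral to three pieces,
$$2m\beta \iint B \cdot \Psi\Psi'\zeta^2 \nabla v,\quad 2m \iint B \cdot v (\Psi')^2 (1+\Psi) \zeta^2 \nabla v,\quad 4m \iint B \cdot v \Psi\Psi' \zeta \nabla \zeta.$$
For the first two I would use Young's inequality with weights chosen so that the $|\nabla v|^2$ portions are absorbed into the remaining halves of the two good diffusion terms; the corresponding $|B|^2$ portions involve $v^{1-\beta} = v^{1/m} \leq \mu_+^{1/m}$ and $v^{2-\beta} = v^{1+1/m} \leq \mu_+^{1+1/m}$ respectively. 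Using $\Psi \leq \ln(1/\delta)$ and $\Psi' \leq 1/(\delta k)$ from \eqref{Psi_bound} these produce the two prefactors $\ln(1/\delta) \mu_+^{1/m} /(\delta k)$ and $\ln(1/\delta) \mu_+^{1+1/m}/(\delta^2 k^2)$ in \eqref{E:LE}.

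For the third piece, which involves $\nabla\zeta$ rather than $\nabla v$, I would use the asymmetric Young splitting with $a = 2\sqrt{m v^{2-\beta}\Psi}\,|B|\Psi'\zeta$ and $b = 2\sqrt{m v^\beta \Psi}\,|\nabla\zeta|$, so that one factor of $\sqrt{\Psi}$ sits on each side of the inequality. This yields an extra contribution of the form $C \mu_+^\beta \iint \Psi |\nabla \zeta|^2$ plus a $|B|^2$ piece matching the second prefactor above. Finally each remaining $|B|^2$-integral has the form $\iint |B|^2 \zeta^2 \mathbf{1}_{A^+_{k,\rho}}$; Hölder's inequality with exponents $(\hat q_1, \hat q_2)$ for $|B|^2$ and the duals for the characteristic function, combined with the identity $\frac{1}{\hat q_i} + \frac{2(1+\kappa)}{q_i} = 1$ coming from \eqref{q12}, yields $\|B\|^2_{L^{2\hat q_1}_t L^{2\hat q_2}_x} [\int [A^+_{k,\rho}(t)]^{q_1/q_2}\,dt]^{2(1+\kappa)/q_1}$. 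The main obstacle is precisely the choice of this last asymmetric Young splitting: a symmetric splitting would produce $\Psi^2 |\nabla\zeta|^2$ on the right, which is too large by a factor of $\ln(1/\delta)$, whereas placing $\sqrt{\Psi}$ on both sides yields the sharp $\Psi |\nabla \zeta|^2$ that appears in \eqref{E:LE}.
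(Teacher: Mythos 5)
Your proposal is correct and follows essentially the same route as the paper: the identical test function $\varphi = 2m\,v^{\beta}\Psi\Psi'\zeta^{2}$, the same three-term decomposition of both the diffusion and drift integrals, the same cancellations $v^{1/m}v^{\beta-1}=1$ and $v^{1/m}v^{\beta}=v$, the same absorption of the $|\nabla v|^{2}$ pieces into the two good diffusion terms via Young with $\epsilon_{1},\epsilon_{2}$, the same asymmetric splitting placing one $\sqrt{\Psi}$ on each side of the mixed terms, and the same final H\"{o}lder step with exponents $(\hat q_{1},\hat q_{2})$ and their duals $(q_{1},q_{2})$. Your observation that the asymmetric $\sqrt{\Psi}$-split is what keeps a single power of $\ln(1/\delta)$ (rather than a square) in the drift bound is precisely the point behind the paper's choices of $\epsilon_{3}$ and the $\Psi\le 1+\Psi$ absorption.
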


\begin{proof}
Recall $\beta=\frac{m-1}{m}$. Let us choose the test function
\[
\varphi(x,t) =2m\,v^\beta  \Psi(v) \Psi'(v) \zeta^2.
\]
Then using the property of $\Psi$, note that
\[
\nabla \varphi = 2(m-1) v^{\beta -1} \Psi\Psi' \zeta^2 \nabla v + 2m v^{\beta} (1+\Psi)(\Psi')^2\zeta^2 \nabla v + 4m v^{\beta}\Psi\Psi'\zeta \nabla \zeta.
\]

By testing $\varphi$ to the equation of $v$, \eqref{main v}, we have
\begin{equation}\label{weighted_weak}
0 = \iint_{Q_{\rho}} \left[  (v^{1/m})_t -  \Delta v -  \nabla\cdot (B v^{1/m}) \right] \varphi \,dx\,dt =: I + II + III .
\end{equation}

Then we compute the following integral quantities from \eqref{weighted_weak}:
\[\begin{split}
I &= \iint_{Q_\rho}m\,v^{1-\frac{1}{m}} (v^{\frac{1}{m}})_t \,2 \Psi \Psi' \zeta^2 \,dx\,dt \\
  &= \iint_{Q_\rho} \left(\partial_t \Psi^2 (v)\right) \zeta^2 \,dx\,dt \\
  &= \int_{K_{\rho} \times \{t_1\}} \Psi^2 (v) \zeta^2 \,dx - \int_{K_{\rho}
  \times \{t_0\}} \Psi^2 (v) \zeta^2 \,dx .
\end{split}\]

Moreover, we compute
\[
II = \iint_{Q_\rho} \nabla v \cdot \nabla \varphi \,dx\,dt =: II_1 + II_2 + II_{3},
\]
where
\begin{align*}
II_1 &= \iint_{Q_{\rho}} 2(m-1) v^{-\frac{1}{m}} |\nabla v|^2 \Psi\Psi' \zeta^2 \,dx\,dt, \\
II_2 &= \iint_{Q_{\rho}} 2m v^{\beta} |\nabla v|^2 (1+\Psi)(\Psi')^2\zeta^2 \,dx\,dt, \\
II_3 &= \int_{Q_{\rho}}  4m v^{\beta} \nabla v \cdot \nabla \zeta \Psi\Psi'\zeta\,dx\,dt.
\end{align*}
Note that
\[II_1 \geq 0 , \quad II_2 \geq 0. \]
For $II_3$, we derive
\begin{equation}\label{logII3}
II_3 \geq -\frac{m}{2} \iint_{Q_\rho} v^\beta |\nabla v|^2\Psi(\Psi')^2 \zeta^2 \,dx\,dt
         - 8 m \iint_{Q^0_\rho} v^\beta \Psi |\nabla\zeta|^2 \,dx\,dt.
\end{equation}

To handle the integral quantities involving the drift term, we calculate
\[
III = \iint_{Q_\rho} v^{1/m} B \cdot \nabla \varphi \,dx\,dt =: III_1 + III_2 + III_3
\]
where
\begin{align*}
III_1 &= \iint_{Q_\rho} 2(m-1) B \cdot \nabla v \,\Psi\Psi' \zeta^2 \,dx\,dt, \\
III_2 &= \iint_{Q_\rho} 2m v B \cdot \nabla v \, (1+\Psi)(\Psi')^2\zeta^2  \,dx\,dt, \\
III_3 &= \iint_{Q_\rho} 4m v B \cdot \nabla \zeta \, \Psi\Psi'\zeta \,dx\,dt .
\end{align*}
By taking the Cauchy-Schwartz inequalities, we obtain the followings:
\begin{align*}
III_1 &\geq -\epsilon_1 \iint_{Q_\rho} v^{-\frac{1}{m}} |\nabla v|^2 \Psi \Psi' \zeta \,dx\,dt - (m-1)^2 \epsilon_{1}^{-1} \iint_{Q_\rho} v^{\frac{1}{m}}|B|^2 \Psi\Psi' \zeta^2\,dx\,dt, \\
III_{2} &\geq -\epsilon_{2} \iint_{Q_\rho} v^{\beta} |\nabla v|^2 (1+\Psi)(\Psi')^2 \zeta^2\,dx\,dt \\
       &\quad  - m^2\epsilon_{2}^{-1} \iint_{Q_\rho} |B|^2 v^{1+\frac{1}{m}} (1+\Psi)(\Psi')^2 \zeta^2\,dx\,dt, \\
III_{3} &\geq- \epsilon_{3} \iint_{Q_\rho} v^\beta |\nabla \zeta|^2 \Psi \,dx\,dt
          -4m^2 \epsilon_{3}^{-1} \iint_{Q_{\rho}} |B|^2 v^{1+\frac{1}{m}} \Psi (\Psi')^2 \zeta^2\,dx\,dt.
\end{align*}

The choice of $\epsilon_2 = \frac{m}{2}$ provides
\[
\iint_{Q_{\rho}} m v^{\beta} |\nabla v|^2 |\Psi'(v)|^2\zeta^2 \,dx\,dt
\]
on the left hand side of the logarithmic estimate which can be ignored because of its positivity. Moreover $\epsilon_1 = 2(m-1)$ cancels out $II_1$. Let us fix $\epsilon_3 = m$.

Finally, using the bounds of $v \leq \mu_{+}$ and bounds \eqref{Psi_bound}, we have the following bounds:
\[
{\iint_{Q_\rho} |B|^2 v^{\frac{1}{m}} \Psi\Psi' \zeta^2\,dx\,dt}
\leq \frac{\mu^{\frac{1}{m}}_{+} \ln \frac{1}{\delta}}{\delta k} \|B\|^{2}_{L_{t}^{2\hat{q}_1} L_{x}^{2\hat{q}_2} (Q_\rho)}  \left[ \int_{t_0}^{t_1} [A^{+}_{k,\rho} (t)]^{\frac{q_1}{q_2}} \,dt \right]^{\frac{2(1+\kappa)}{q_1}}
\]
and
\[\begin{split}
& {\iint_{Q_{\rho}} |B|^2 v^{1+\frac{1}{m}}(1+ \Psi) (\Psi')^2 \zeta^2\,dx\,dt} \\
&\leq \frac{(1+\ln \frac{1}{\delta}) \mu_{+}^{1+\frac{1}{m}}}{\delta^2 k^{2}} \|B\|^{2}_{L_{t}^{2\hat{q}_1} L_{x}^{2\hat{q}_2} (Q_\rho)}  \left[ \int_{t_0}^{t_1} [A^{+}_{k,\rho} (t)]^{\frac{q_1}{q_2}} \,dt \right]^{\frac{2(1+\kappa)}{q_1}}.
\end{split}\]
All estimates leads to \eqref{energy 2}.
\end{proof}

\begin{remark}\label{R:intrinsic}(Intrinsic scaling)
From the two energy estimates (Proposition~\ref{P:EE} and Proposition~\ref{P:LE}), we observe a proper intrinsic scaling for $v$, a non-negative bounded weak solution of \eqref{main v}. That is, roughly, to find a proper time length of a local parabolic cylinder in such a way
\begin{equation}\label{intrinsic_v}
(\mu_{\pm} \mp k)^{-\beta} k^2 \zeta_{t} \sim k^2 |\nabla \zeta|^2.
\end{equation}
If assuming $(\mu_{\pm} \mp k)\sim k$, this implies the time interval considered $(t_1-t_0) \sim k^{\beta} \rho^2$
for $k$ a constant chosen proportional to the essential oscillation of $v$ and for $\rho$ the spatial radius.

\end{remark}

\section{Subcritical regime}\label{S:subcritical}

In this section, we establish local H\"{o}lder continuity of $v$, a non-negative bounded weak solution of \eqref{main v} under the condition on $B$ in \eqref{B} satisfying \eqref{hatq12} with $\kappa \in (0, 2/d)$, the \emph{subcritical} region. 

Because we assume the boundedness of $v$, let us denote $k$ as a constant such that
\[
0< k \leq  \|v\|_{L^{\infty}(Q_1)}.
\]
To capture the local behavior of nonlinear diffusion parabolic equation \eqref{main v}, the intrinsic time scaling is essential geometric condition (Remark~\ref{R:intrinsic}). {One may think that a weak solution of degenerate diffusion equations in an intrinsically scaled parabolic cylinder behaves like a weak solution of linear diffusion equations.}
Let us denote the intrinsic time scaling and corresponding parabolic cylinder:
\begin{equation}\label{T}
  T_{k,\rho}(\theta) = \theta k^{-\beta} \rho^2,
\end{equation}
and
\begin{equation}\label{Q}
  Q_{k,\rho}(\theta) = K_{\rho} \times (-T_{k,\rho}(\theta), 0],
\end{equation}
for given constants $k$, $\rho$, and $\theta$. For simplicity, we will denote
\[T_{k,\rho}=T_{k,\rho}(1),\quad  Q_{k,\rho}= Q_{k,\rho}(1).\]

To prove the local H\"{o}lder continuity, we follow the work by DiBenedetto in \cite{DB83} for a general porous medium equation. We modify the proof in \cite{DB83} to cover both degenerate and linear diffusions and to allow weaker conditions on the drifts. We begin by discussing the two alternatives referring a bounded non-negative solution $v$ of \eqref{main v} in an intrinsically scaled cylinder. The first alternative states that, if $v$ is large on most of a cylinder, then $v$ is bounded away from zero on a subcylinder with the same center-top point as the original cylinder.
When the first alternative fails, the second alternative starts from that $v$ is large on a fixed fraction of a cylinder. Then by locating a certain time and applying so-called the expansion of positivities and DeGorgi type iterations (refer \cites{GSV, DBGiVe06, CHKK}), we show that $v$ stays away from its maximum on a subcylinder with the same center-top point as the original cylinder. Eventually, we shall see the precise quantitative description of these results in Section~\ref{SS:Holder}.
We emphasize that our argument holds for the uniform parabolic equation (see Remark~\ref{rmk linear}).

\subsection{The first alternative}\label{SS:first}

For any fixed $\rho\in(0,\frac{1}{8})$, let us denote
\[
\mu_+=\sup_{Q_{k,4\rho}}v,\quad \mu_-=\inf_{Q_{k,4\rho}}v.
\]
Without loss of generality, we assume $ 0\leq \mu_{-} < \mu_{+} < \infty $. Because of the boundeness of $v$, $\mu_{+}=\mu_{-}$ means constant $v$ in $Q_{k, 4\rho}$.
Then we intend to choose a constant $k > 0$ to be proportional to the oscillation of $v$ in $Q_{k, 4\rho}$ assuming
\begin{equation}\label{case1}
  4k\geq \mu_{+} - \mu_{-} \geq 2k.
\end{equation}
When $\mu_-$ is large, the equation is uniformly parabolic which can be treated similarly as the linear equation with $m=1$ i.e. $\beta=0$. Therefore we only consider the two cases that either $\mu_-\leq k$ or $\beta=0$.


The following proposition is the DeGiorgi type iteration as in \cite[Lemma~4.1]{DB83}, \cite[Lemma~3.1]{DB82}.

\begin{proposition}\label{L:FA}
Let $v$ be a bounded non-negative weak solution of \eqref{main v} in $Q_{k, 4\rho}$. Suppose \eqref{case1} and
\begin{equation}
    \label{small mu-}
    \text{ either }\quad  (\mu_-\leq k) \quad \text{ or }\quad  (\beta=0)\quad \text{ holds }.
\end{equation}
Then there exists a number $\nu_0  \in (0,1) $ such that, if
\begin{equation}\label{L:FA:ass}
  \left|\left\{ (x,t) \in Q_{k, 4\rho} : v(x,t) \geq \mu_{-} + k \right\}\right| \leq \nu_0 \left|Q_{k, 4\rho}\right|,
\end{equation}
then either
\begin{equation}\label{L:FA:result1}
  k^{-2-\beta \frac{2(1+\kappa)}{q_1}} \rho^{d\kappa  }>1 ,
\end{equation}
or
\begin{equation}\label{L:FA:result2}
  \essinf_{Q_{k, 2\rho} } v(x,t) > \mu_{-} + \frac{k}{2}.
\end{equation}
\end{proposition}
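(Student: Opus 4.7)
The plan is to run a De~Giorgi truncation iteration on nested, intrinsically-scaled cylinders shrinking to $Q_{k,2\rho}$, driven by the energy estimate \eqref{energy 2} of Proposition~\ref{P:EE}, the mixed-norm embedding Proposition~\ref{P:admissible}, and the fast geometric-convergence Lemma~\ref{L:iter2}. Set $\rho_n := 2\rho(1 + 2^{-n})$, $k_n := \tfrac{k}{2}(1 + 2^{-n})$, $Q_n := Q_{k,\rho_n}$, $v_n := (v - \mu_- - k_n)_-$, $A_n := Q_n \cap \{v_n > 0\}$, and $Y_n := |A_n|/|Q_n|$. Let $\zeta_n$ be a smooth cutoff, equal to $1$ on $Q_{n+1}$, vanishing on $\partial_p Q_n$, with $|\nabla\zeta_n|^2 \leq C\, 4^n/\rho^2$ and $|\zeta_n (\zeta_n)_t| \leq C\, 2^n k^\beta/\rho^2$ — the factor $k^\beta$ encoding the intrinsic scale~\eqref{T}. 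Under hypothesis~\eqref{small mu-}, the weight $(\mu_-+k_n)^{-\beta}$ in \eqref{energy 2} is comparable to $k^{-\beta}$, so plugging $(v_n,\zeta_n)$ into \eqref{energy 2} (applied with the truncation level $k_n$ in place of $k$) and using $v_n \leq k$ on $A_n$ yields
\[
\|v_n \zeta_n\|_{V^2(Q_n)}^2 \ \leq\ C\, 4^n\, \tfrac{k^2}{\rho^2}\,|A_n| \ +\ \mathcal{D}_n,
\]
where $\mathcal{D}_n$ is the drift remainder from \eqref{energy 2}.

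The central technical point is to match $\mathcal{D}_n = C(\mu_-+k_n)^{2/m}\|B\|^2\bigl[\int|A_n(t)|^{q_1/q_2}\,dt\bigr]^{2(1+\kappa)/q_1}$ to the same scale as the first term. The exponents $q_1,q_2$ in~\eqref{q12} are arranged, via the scaling identity~\eqref{hatq12}, so that a H\"older application in $t$, together with the bound $\mu_- + k_n \lesssim k$ furnished by~\eqref{small mu-}, rewrites $\mathcal{D}_n$ as a constant multiple of
\[
\bigl(k^{-2-\beta\cdot 2(1+\kappa)/q_1}\,\rho^{d\kappa}\bigr)\cdot \tfrac{4^n k^2}{\rho^2}\,|Q_n|\cdot Y_n^{2(1+\kappa)/q_1}.
\]
When~\eqref{L:FA:result1} \emph{fails}, i.e.\ $k^{-2-\beta\cdot 2(1+\kappa)/q_1}\rho^{d\kappa}\leq 1$, this prefactor is bounded, and the subcriticality $\kappa>0$ forces the exponent $2(1+\kappa)/q_1$ to be strictly positive, supplying a genuine superlinear gain in $Y_n$. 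Applying Proposition~\ref{P:admissible} to $v_n\zeta_n \in V_0^2(Q_n)$ and using $v_n \geq k_n - k_{n+1} = 2^{-n-2}k$ on $A_{n+1}$ in the standard De~Giorgi manner, one derives a recursive inequality $Y_{n+1} \leq C\, b^n\, Y_n^{1+\alpha}$ for some $\alpha,C,b>0$ independent of $n, k, \rho$.

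It remains to invoke Lemma~\ref{L:iter2} with $Z_n \equiv 0$: for $\nu_0$ chosen sufficiently small in terms of $C, b, \alpha$, the hypothesis $Y_0 \leq \nu_0$ forces $Y_n \to 0$, hence $|Q_{k,2\rho} \cap \{v < \mu_- + \tfrac{k}{2}\}| = 0$, which is~\eqref{L:FA:result2}. The chief obstacle throughout is the scaling-alignment of the drift contribution with the natural $V^2$-bound; the subcritical identity \eqref{hatq12} with $\kappa>0$ is precisely what permits this, while hypothesis~\eqref{small mu-} prevents the degenerate weight $(\mu_-+k_n)^{-\beta}$ from disrupting the time/space balance in \eqref{energy 2}.
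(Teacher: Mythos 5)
Your overall scaffolding matches the paper's: nested intrinsically-scaled cylinders shrinking from $Q_{k,4\rho}$ to $Q_{k,2\rho}$, the lower-level energy estimate \eqref{energy 2} with the weight $(\mu_-+k_i)^{-\beta}\sim k^{-\beta}$ under hypothesis \eqref{small mu-}, the change of time variable $\bar t=k^\beta t$, the Sobolev embedding, and Lemma~\ref{L:iter2}. However, there is a genuine gap at the point where you claim to collapse everything into a one-variable recursion $Y_{n+1}\leq Cb^nY_n^{1+\alpha}$ and invoke Lemma~\ref{L:iter2} with $Z_n\equiv0$.

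The drift contribution from \eqref{energy 2} is controlled by
$\left[\int_{-T_n}^0 |A_n(t)|^{q_1/q_2}\,dt\right]^{2(1+\kappa)/q_1}$,
and this mixed-norm time integral does \emph{not} reduce to a superlinear power of $|A_n|=\int|A_n(t)|\,dt$ in general. The best one can extract by H\"older/Jensen in $t$, using only $|A_n(t)|\leq|K_n|$, is an exponent $\min\{1,q_1/q_2\}$ on $|A_n|$ inside the bracket; raising to the power $\tfrac{2(1+\kappa)}{q_1}=1-\tfrac{1}{\hat q_1}$ this yields $|A_n|^{1-1/\min\{\hat q_1,\hat q_2\}}$. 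Combined with the Sobolev gain $|A_n|^{2/(d+2)}$, the total exponent is $1-\tfrac{1}{\min\{\hat q_1,\hat q_2\}}+\tfrac{2}{d+2}$, which exceeds $1$ only when $\min\{\hat q_1,\hat q_2\}>\tfrac{d+2}{2}$. The hypothesis \eqref{hatq12} with $\kappa>0$ forces $\max\{\hat q_1,\hat q_2\}>\tfrac{d+2}{2}$, but not the minimum — e.g.\ taking $\hat q_1$ near its lower bound $\tfrac{2}{2-d\kappa}$ (which is close to $1$ for small $\kappa$) leaves $\min\{\hat q_1,\hat q_2\}$ arbitrarily close to $1$. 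In that regime your recursion is \emph{sublinear} and the iteration does not close.

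The paper avoids this entirely by tracking a second scalar
$Z_i=\tfrac{1}{|K_i|}\big[\int[\bar A_-^i(\bar t)]^{q_1/q_2}d\bar t\big]^{2/q_1}$
alongside $Y_i$. The drift term then appears as $Y_i^{2/(d+2)}Z_i^{1+\kappa}$ in the $Y$-recursion, and the mixed-norm embedding Proposition~\ref{P:admissible} (comparing $\|\cdot\|_{L^{q_1}_tL^{q_2}_x}$ to $\|\cdot\|_{V^2}$) closes a separate recursion $Z_{i+1}\leq C2^{4i}Y_i+C2^{2i}Z_i^{1+\kappa}$. The two-variable Lemma~\ref{L:iter2} — stated precisely for such coupled systems — then gives $Y_i,Z_i\to0$. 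Your proposal mentions Proposition~\ref{P:admissible} but deploys it only to recover a $Y_n$ bound rather than a $Z_n$ bound; the essential step of keeping $Z_n$ as an independent dynamical quantity, and using $\kappa>0$ as the superlinear gain in the $Z$-recursion (rather than in a $Y$-power), is missing. To repair the argument, you need to set up the coupled $(Y_i,Z_i)$ iteration as in the paper.
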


\begin{proof}
We consider a level set $(v - \mu_{-} - k)_{-}$, equivalently considering the set $\{ \mu_{-} < v < \mu_{-} + k \}$. First, construct sequences $\{\rho_i\}$, $\{k_i\}$, $\{K_{i}\}$, and $\{Q_i\}$ such that
\begin{gather}\label{DGrho}
\rho_{i} = \frac{\rho}{2} + \frac{\rho}{2^{i+1}}, \quad \rho_0 = \rho, \ \rho_{\infty}= \frac{\rho}{2},\\
k_{i} =\frac{k}{2} + \frac{k}{2^{i+1}}, \quad k_{0}=k, \ k_{\infty} =\frac{k}{2},\\
K_{i}= K_{4\rho_i}, \quad Q_{i} =K_{i}\times (- k^{-\beta}
(4\rho_{i})^{2}, 0].
\end{gather}
Moreover, we take a sequence of smooth cutoff functions
$\{ \zeta_{i}\}$ such that
\begin{equation}\label{DGzeta}
\zeta_{i} = \begin{cases}
            1 & \text{ in } Q_{i+1} \\
            0 & \text{ on the parabolic boundary of } Q_{i},
 \end{cases}
\end{equation}
satisfying
\[
|\nabla \zeta_{i}| \leq \frac{2^{i+2}}{4\rho}, \quad \text{ and } \quad
\partial_{t} \zeta_{i} \leq \frac{2^{2(i+2)}}{  k^{-\beta} (4\rho)^{2}}.
\]

It is easy to observe
\[v_{-}^{i}:=(v - \mu_{-}- k_i)_{-} \leq k_i \leq k .\]
The energy estimate \eqref{energy 2} for $v_{-}^{i}$ and $\zeta_i$ in $Q_{i}$ provides the following:
\begin{equation}\label{EE01}\begin{split}
  & (\mu_{-}+k_i)^{-\beta}\sup_{-  k^{-\beta}(4\rho_{i})^2 < t < 0} \, \int_{K_{i}\times\{t\}} ( v_-^{i})^2 \zeta^2_i \,dx
  +  \iint_{Q_{i}} \, |\nabla (v_-^{i}\zeta_i )|^2 \,dx\,dt \\
  &\leq C (\mu_{-}+k_i)^{-\beta} k_i \iint_{Q_i} v^{i}_{-} \zeta_i \partial_{t} \zeta_i \,dx\,dt + C \iint_{Q_i} (v^i_{-})^2 |\nabla \zeta_i|^2 \,dx\,dt \\
  &\quad + C (\mu_-+k)^{2/m} \|B\|^{2}_{L_{t}^{2\hat{q}_1} L_{x}^{2\hat{q}_2}}
  \left[ \int_{-  k^{-\beta}(4\rho_{i})^2}^{0} \left[A_{-}^{i}(t)\right]^{\frac{q_1}{q_2}} \,dt\right]^{\frac{2(1+\kappa)}{q_1}}
\end{split}\end{equation}
by denoting $A_{-}^{i} = A_{k_i, 4\rho_i}^{-}$.

We observe that
\[v_{-}^{i}:=(v - \mu_{-}- k_i)_{-} \leq k_i \leq k \]
and
\begin{equation*}
 (k/2)^{\beta}\leq (\mu_{-} + k_i)^{\beta} \leq (\mu_{-} + k)^{\beta}
\end{equation*}
because
\[
k/2 \leq \mu_{-} + k/2 \leq \mu_{-}+ k_i \leq \mu_{-}+ k .
\]
Multiplying $(\mu_{-} + k_i)^{\beta}$ to \eqref{EE01} leads the following:
\begin{equation}\label{EE02}\begin{split}
  & \sup_{- k^{-\beta} (4\rho_{i})^2 < t < 0} \, \int_{K_{i}\times\{t\}} ( v_-^{i})^2 \zeta^2_i \,dx
  +  (k/2)^{\beta}\iint_{Q_{i}} \, |\nabla (v_-^{i}\zeta_i )|^2 \,dx\,dt \\
  &\leq C \left[ \frac{2^{2i}k^2}{ k^{-\beta} \rho^2} + \frac{(\mu_{-}+k)^{\beta}k^2 2^{2i}}{\rho^2}\right] \left|A_{-}^{i}\right| \\
  &\quad + C (\mu_-+k)^{1+1/m} \|B\|^{2}_{L_{t}^{2\hat{q}_1} L_{x}^{2\hat{q}_2}}
  \left[ \int_{-  k\rho_{i}^2}^{0} \left[A_{-}^{i} (t)\right]^{\frac{q_1}{q_2}} \,dt\right]^{\frac{2(1+\kappa)}{q_1}}
\end{split}\end{equation}
using the bounds and properties of $\zeta_i$.

Now we take the change of variable
\[
\bar{t} = k^{\beta}t \in [- (4 \rho_{i})^2, 0].
\]
Also denote $\bar{v} = v(\cdot, \bar{t})$, $\bar{\zeta}_{i}=\zeta_i (\cdot, \bar{t})$, and $\bar{Q}_i = K_{i} \times (- (4\rho_i)^2, 0]$. Moreover name the sets $\bar{A}^{-}_{k_i, \rho_i}$ and $\bar{A}^{-}_{k_i, \rho_i}$ corresponding to $A^{-}_{k_i, \rho_i}$ and $A^{-}_{k_i, \rho_i}(t)$ with $\bar{t}$, respectively.
From \eqref{EE02}, we derive that
\begin{equation*}
\begin{split}
   & \sup_{-  \rho_{i}^2 < \bar{t} < 0} \, \int_{K_{i}\times\{\bar{t}\}} ( \bar{v}_-^{i})^2 \bar{\zeta}^2_i \,dx
  +  C \iint_{\bar{Q}_{i}} \, |\nabla (\bar{v}_-^{i}\bar{\zeta}_i )|^2 \,dx\,d\bar{t} \\
  &\leq C \left[ \frac{2^{2i}k^2}{ \rho^2} + \left(\frac{\mu_{-}+k}{k}\right)^{\beta}\frac{k^2 2^{2i}}{\rho^2}\right] \left|\bar{A}_{-}^{i}\right| \\
  &\quad + C \left({\mu_-+k}\right)^{1+1/m} k^{-\beta \frac{2(1+\kappa)}{q_1}} \|B\|^{2}_{L_{t}^{2\hat{q}_1} L_{x}^{2\hat{q}_2}}
  \left[ \int_{-  \rho_{i}^2}^{0} \left[\bar{A}_{-}^{i} (t)\right]^{\frac{q_1}{q_2}} \,d\bar{t}\right]^{\frac{2(1+\kappa)}{q_1}}.
\end{split}\end{equation*}
{In either the case $\beta=0$ or $\mu_-\leq k$, we can further simplify the right hand side by}
\begin{equation}\label{EE03}
\begin{split}
   & \sup_{-  \rho_{i}^2 < \bar{t} < 0} \, \int_{K_{i}\times\{\bar{t}\}} ( \bar{v}_-^{i})^2 \bar{\zeta}^2_i \,dx
  +  C \iint_{\bar{Q}_{i}} \, |\nabla (\bar{v}_-^{i}\bar{\zeta}_i )|^2 \,dx\,d\bar{t} \\
  &\leq C \left[ \frac{2^{2i}k^2}{ \rho^2} + \frac{k^2 2^{2i}}{\rho^2}\right] \left|\bar{A}_{-}^{i}\right| \\
  &\quad + C  k^{-\beta \frac{2(1+\kappa)}{q_1}} \|B\|^{2}_{L_{t}^{2\hat{q}_1} L_{x}^{2\hat{q}_2}}
  \left[ \int_{-  \rho_{i}^2}^{0} \left[\bar{A}_{-}^{i} (t)\right]^{\frac{q_1}{q_2}} \,d\bar{t}\right]^{\frac{2(1+\kappa)}{q_1}}.
\end{split}\end{equation}

The Sobolev embedding (Theorem~\ref{T:Sobolev}) for $\bar{v}_{-}^{i}\bar{\zeta}_i$ and the property of $\zeta_i$ implies the following:
\begin{equation}\label{EE04}\begin{split}
& \iint_{\bar{Q}_{i+1}}  (\bar{v}- \mu_{-} - k_{i})_{-}^{2}  \,dx \,d\bar{t}
\leq \iint_{\bar{Q}_{i}}  (\bar{v}- \mu_{-} - k_{i})_{-}^{2}\bar{\zeta}_{i}^{2} \,dx \,d\bar{t} \\
&\leq |\bar{A}_{-}^{i}|^{\frac{2}{d+2}}\left[
\sup_{ -  (4\rho_{i})^{2} < \bar{t} < 0} \, \int_{K_{i}\times\{\bar{t}\}} (\bar{v}_{-}^{i})^2 \bar{\zeta}^2_i \,dx
 + C \iint_{\bar{Q}_{i}} \, \left|\nabla (\bar{v}_{-}^{i} \bar{\zeta}_i)\right|^2  \,dx\,d\bar{t} \right].
\end{split}\end{equation}

In the set $ \{ (\bar{v} - \mu_{-} -k_{i+1})_{+} >0 \}$ equivalently $\{ \bar{v} < \mu_{-} + k_{i+1}\}$, we observe that
\begin{equation}\label{EE05}
(\bar{v}- \mu_{-} - k_{i})_{-} \geq k_{i} - k_{i+1} = \frac{k}{2^{i+2}}.
\end{equation}
It follows by combining \eqref{EE03}, \eqref{EE04}, and \eqref{EE05} and by multiplying $k^2 /2^{2i}$:
\begin{equation}\label{EE06}\begin{split}
 & |\bar{A}_{-}^{i+1}| \leq   C \frac{2^{4i}}{\rho^2} \left|\bar{A}_{-}^{i}\right|^{1+\frac{2}{d+2}}  \\
  &+ C 2^{2i}\|B\|^{2}_{L_{t}^{2\hat{q}_1} L_{x}^{2\hat{q}_2}}
   k^{-2-\beta \left(\frac{2 (1+\kappa)}{q_1}\right)}
  \left[\int_{-  (4\rho_{i})^2}^{0} \left[\bar{A}_{-}^{i}(\bar{t})\right]^{\frac{q_1}{q_2}} \,d\bar{t}\right]^{\frac{2(1+\kappa)}{q_1}}
  |\bar{A}^{-}_{i}|^{\frac{2}{d+2}}.
\end{split}\end{equation}

We note that
\[
|\bar{Q}_i| \sim |\bar{Q}_{i+1}| \sim  \rho^{d+2}, \quad |\bar{Q}_i|^{\frac{2}{2+d}} \sim  \rho^2.
\]

Let us denote
\[
\bar{Y}_{i} = \frac{|\bar{A}_{-}^{i}|}{|\bar{Q}_i|}, \quad \bar{Z}_i = \frac{1}{|K_i|} \left[ \int_{-  (4\rho_{i})^2}^{0} \left[\bar{A}_{-}^{i}(\bar{t})\right]^{\frac{q_1}{q_2}} \,d\bar{t}\right]^{\frac{2}{q_1}}.
\]

Then \eqref{EE06} becomes
\begin{equation}\begin{split}
 & \bar{Y}_{i+1} \leq   C  2^{4i} \bar{Y}_{i}^{1+ \frac{2}{d+2}}  \\
  &+ C  2^{2i}\|B\|^{2}_{L_{t}^{2\hat{q}_1} L_{x}^{2\hat{q}_2}}
   k^{-2-\beta \left(\frac{2 (1+\kappa)}{q_1}\right)}\rho^{d\kappa} \bar{Y}_{i}^{\frac{2}{d+2}} \bar{Z}_{i}^{1+\kappa}.
\end{split}\end{equation}

If
\begin{equation}\label{EE07}
  k^{-2-\beta \left(\frac{2 (1+\kappa)}{q_1}\right)}\rho^{d\kappa} > 1,
\end{equation}
then this implies \eqref{L:FA:result1} (H\"{o}lder continuity directly). Otherwise, when \eqref{EE07} fails, we have the following inequality:
\begin{equation}\label{EE08}
\bar{Y}_{i+1} \leq   C  2^{4i} \bar{Y}_{i}^{1+ \frac{2}{d+2}}
  + C 2^{2i}\|B\|^{2}_{L_{t}^{2\hat{q}_1} L_{x}^{2\hat{q}_2}} \bar{Y}_{i}^{\frac{2}{d+2}} \bar{Z}_{i}^{1+\kappa}.
\end{equation}

Since \eqref{EE08} is the dimensionless form, we take the change of variable to $t$ from $\bar{t}$ which results
\begin{equation}\label{EE010}
Y_{i+1} \leq   C  2^{4i} Y_{i}^{1+ \frac{2}{d+2}}
  + C 2^{2i}\|B\|^{2}_{L_{t}^{2\hat{q}_1} L_{x}^{2\hat{q}_2}} Y_{i}^{\frac{2}{d+2}} Z_{i}^{1+\kappa}.
\end{equation}

From the setting of $\bar{Z}_{i}$, we observe that
\begin{equation}\label{EE071} \begin{split}
& (k_{i} - k_{i+1})^2 \left[ \int_{-  (4\rho_{i+1})^2}^{0} \left[\bar{A}_{-}^{i+1}(\bar{t})\right]^{\frac{q_1}{q_2}} \,d\bar{t}\right]^{\frac{2}{q_1}} \\
&\leq \left[ \int_{- k^{-\beta}(4\rho_{i+1})^2}^{0}
\left[ \int_{K_{i+1}} (\bar{v}-\mu_{-} - k_{i})_{-}^{q_2} \,dx\right]^{\frac{q_1}{q_2}} \,d\bar{t}\right]^{\frac{2}{q_1}} \\
&\leq \left[ \int_{-  k^{-\beta}(4\rho_{i})^2}^{0}
\left[ \int_{K_{i}} (\bar{v}-\mu_{-} - k_{i})_{-}^{q_2} \bar{\zeta}_{i}^{q_2} \,dx\right]^{\frac{q_1}{q_2}} \,d\bar{t}\right]^{\frac{2}{q_1}}\\
& = \|\bar{v}_{-}^{i} \zeta_{i}\|_{L_{t}^{q_1}L_{x}^{q_2} (Q_i)}^{2}
\end{split}\end{equation}
first because of $ (\bar{v}-\mu_{-} - k_{i})_{-} \geq k_i - k_{i+1}$ in the set $\{ \bar{v} < \mu_{-}+k_{i+1}\}$ and second using the property of cutoff function $\bar{\zeta}_i$.
On the other side, we apply the embedding given in Proposition~\ref{P:admissible} and then use \eqref{EE02} to have
\begin{equation}\label{EE081}
\|\bar{v}_{-}^{i} \bar{\zeta}_{i}\|_{L_{t}^{q_1}L_{x}^{q_2} (Q_i)}^{2} \leq C\|\bar{v}_{-}^{i} \bar{\zeta}_{i}\|_{V^{2}(\bar{Q}_{i})}^{2}
\leq C \frac{k^2}{\rho^2} 2^{2i}  \left|\bar{A}_{-}^{i}\right| + C k^2 \rho^{d} \bar{Z}_{i}^{1+\kappa}.
\end{equation}
Then the combination of \eqref{EE071} and \eqref{EE081} provides
\[
\bar{Z}_{i+1} \leq C2^{4i} \frac{|\bar{A}_{-}^{i}|}{|\bar{Q}_i|} + C 2^{2i} \bar{Z}_{i}^{1+\kappa}
\]
which is dimensionless form. Therefore the change of variable to $t$ gives
\begin{equation}\label{EE9}
Z_{i+1} \leq C 2^{4i} Y_{i} + C 2^{2i} Z_{i}^{1+\kappa}.
\end{equation}

With \eqref{EE010} and \eqref{EE9}, we apply Lemma~\ref{L:iter2} to conclude that both $Y_i$ and $Z_i$ tend to zero as $i \to \infty$, provided
\[
Y_0 + Z_{0}^{1+\kappa} \leq (2C)^{-\frac{1+\kappa}{\sigma}} 2^{-\frac{4(1+\kappa)}{\sigma^2}} := \nu_0
\]
where $\sigma = \min\{\kappa, \frac{2}{d+2}\}$. Because
\[
Y_{\infty}= \frac{1}{|Q_{k, 4\rho}|}|\{(x,t)\in K_{2\rho}\times (-k^{-\beta}(2\rho)^2,0]\}|   = 0,
\]
this directly gives our conclusion \eqref{L:FA:result2}.
\end{proof}

\subsection{The second alternative}\label{SS:SA}

Now suppose that the assumption for the first alternative \eqref{L:FA:ass} fails that gives
\begin{equation}\label{SA:ass}
  \left|\left\{ (x,t) \in Q_{k,4\rho}: v(x,t) \geq \mu_{-}+k \right\}\right| \leq (1-\nu_0) \left|Q_{k,4\rho}\right|.
\end{equation}
By \eqref{case1}, this leads to
\begin{equation}\label{L1:ass}
 \left|\left\{ Q_{k,4\rho}: v(x,t) \geq \mu_{+} - k \right\}\right| \leq (1-\nu_0) \left|Q_{k,4\rho}\right|,
\end{equation}
The second alternative starts from \eqref{L1:ass} which is basically some measure control overall the parabolic cube $Q_{k, 4\rho} $. Our goal is to locate a smaller parabolic cylinder sharing the top vertex $(0,0)$ where the essential supremum becomes strictly less than $\mu^{+}$. We need to follow quiet a few steps the combinations of so-called expansion of positivity and DeGiorgi iteration.

The following lemma says that if a non-negative function is large on some part of a cylinder, then it keeps largeness on part of a suitable time slice same as \cite[Lemma~4.2]{DB83}.
\begin{lemma}\label{L1}
{Let $\nu_0 \in (0,1)$ be from Proposition \ref{L:FA} and $\{k,\mu_+,\mu_-,\rho\}$ satisfy \eqref{case1} and \eqref{small mu-}.} If \eqref{L1:ass} holds,
then there exists a time level
\begin{equation}\label{L1:tau}
-t_0 \in (- k^{-\beta} (4\rho)^2, - \frac{\nu_0}{2}  k^{-\beta} (4\rho)^2)
\end{equation}
such that
\begin{equation}\label{L1:res}
  \left|\left\{ K_{4\rho}: v(x, -t_0) \geq \mu_{+} - k \right\}\right| \leq \frac{1-\nu_0}{1-\nu_0 /2} \left|K_{4\rho}\right|.
\end{equation}
\end{lemma}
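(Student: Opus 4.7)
The statement is a one-time-slice pigeonhole assertion, so I would argue by contradiction directly from \eqref{L1:ass} using Fubini. None of the PDE structure, the energy estimates, or the hypotheses \eqref{case1} and \eqref{small mu-} are actually needed for this particular lemma; the latter two are inherited from the running setup simply to keep the statement consistent with the chain of results that begin the second alternative.

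Concretely, set $I := (-k^{-\beta}(4\rho)^2,\, -\tfrac{\nu_0}{2}\, k^{-\beta}(4\rho)^2)$, whose length is $(1-\tfrac{\nu_0}{2})\, k^{-\beta}(4\rho)^2$. Suppose toward a contradiction that for every $-t_0 \in I$,
\[ \bigl|\{x \in K_{4\rho} : v(x,-t_0) \geq \mu_+ - k\}\bigr| > \frac{1-\nu_0}{1-\nu_0/2}\,|K_{4\rho}|. \]
Integrating this strict inequality over $t \in I$ via Fubini, the space-time measure of $\{v \geq \mu_+ - k\} \cap Q_{k,4\rho}$ would be bounded below by
\[ \frac{1-\nu_0}{1-\nu_0/2}\,|K_{4\rho}| \cdot \Bigl(1-\tfrac{\nu_0}{2}\Bigr)\, k^{-\beta}(4\rho)^2 \;=\; (1-\nu_0)\,|Q_{k,4\rho}|, \]
which contradicts \eqref{L1:ass}. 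Hence at least one time level $-t_0 \in I$ must satisfy the claimed bound \eqref{L1:res}.

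The main and essentially only subtlety is bookkeeping: the threshold $\tfrac{1-\nu_0}{1-\nu_0/2}$ is calibrated precisely so that multiplying by the measure fraction $1-\tfrac{\nu_0}{2}$ of the time window returns the target fraction $1-\nu_0$ on the right-hand side of \eqref{L1:ass}. There is no genuine obstacle in proving this lemma; the real difficulty in the second alternative will come afterwards, when this favorable time slice must be propagated forward in time using the logarithmic energy estimate in Proposition \ref{P:LE} to control the super-level sets on a full sub-cylinder.
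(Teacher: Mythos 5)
Your proof is correct and is essentially identical to the paper's own argument: both proceed by contradiction, integrating the assumed failure of \eqref{L1:res} over the time window of length $(1-\nu_0/2)\,k^{-\beta}(4\rho)^2$ and observing that the resulting lower bound $(1-\nu_0)\,|Q_{k,4\rho}|$ (strict) contradicts \eqref{L1:ass}. Your remark that the hypotheses \eqref{case1} and \eqref{small mu-} play no role in this particular lemma is also accurate; they are carried along only to keep the running assumptions consistent for the subsequent steps of the second alternative.
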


\begin{proof}
Suppose not. Then \eqref{L1:res} fails for all $t_0$ in \eqref{L1:tau} which implies
\begin{align*}
 & \left|\left\{ Q_{k,4\rho}: v(x,t) \geq \mu_{+} - k \right\}\right| \\
 &\geq \int_{- k^{-\beta}(4\rho)^2}^{- \frac{\nu_0}{2}  k^{-\beta}(4\rho)^2} \left| \left\{K_{4\rho}: v(x,t) \geq \mu_{+} - k \right\} \right| \,dt \\
 &> (1-\nu_0) \left|Q_{k,4\rho}\right|
\end{align*}
that contradicts with \eqref{L1:ass}.
\end{proof}

{
Now from the time level $-t_0$ from \eqref{L1:res}, we are able to control the measure where a weak solution keeps its largeness in a certain later time using the logarithmic energy estimate in Proposition~\ref{P:LE}.
}
\begin{lemma}\label{L:EPT}(Expansion of positivity in time)
{Under the conditions of Lemma \ref{L1},}
then, there exists $\delta_1 = \delta_1 (\nu_0)<\frac{1}{2}$
such that, if
\begin{equation}\label{L:EPT:ass}
\left| \{ x\in K_{4\rho}: v(x,-t_0) > \mu_{+} -k \}\right| \leq  \frac{1-\nu_0}{1-\nu_0 /2} |K_{4\rho}|
\end{equation}
for some
$-t_0 \in(  -k^{-\beta} (4\rho)^2,-\frac{\nu_0}{2}k^{-\beta}(4\rho)^2)$,
then either
\begin{equation}\label{L:EPT:kr}
 k^{-2-\beta \left(  \frac{2(1+\kappa)}{q_1}\right)} \rho^{d\kappa} \geq \delta_1^2,
\end{equation}
or
\begin{equation}\label{L:EPT:res}
\left| \{ x\in K_{4\rho} : v(x,t) > \mu_{+} - \delta_1 k \} \right| < \left(1-\frac{\nu_0^2}{4} \right)|K_{4\rho}|
\end{equation}
for all $-t \in (-t_0, 0]$.
\end{lemma}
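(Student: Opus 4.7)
My plan is to feed the logarithmic energy estimate Proposition~\ref{P:LE} into a measure-propagation argument along time: starting from the time slice $-t_0$ where $v$ is bounded away from the maximum on a definite fraction of $K_{4\rho}$ (by \eqref{L:EPT:ass}), I want to propagate this measure information forward to every later time $t \in (-t_0,0]$. The workhorse is the function $\Psi(v) = \Psi(v;\delta_1,k)$ of \eqref{Psi}, which vanishes on $\{v \leq \mu_+ - k\}$ and blows up logarithmically as $v \to \mu_+$, together with a purely spatial cutoff $\zeta(x)$ that is $1$ on $K_{4\rho(1-\sigma)}$, vanishes outside $K_{4\rho}$, and satisfies $|\nabla\zeta| \leq C/(\sigma\rho)$, where $\sigma \in (0,1)$ will be chosen depending on $\nu_0$ at the end.

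Fix any $t \in (-t_0, 0]$ and apply Proposition~\ref{P:LE} on $K_{4\rho}\times[-t_0,t]$. Write $N=\ln(1/\delta_1)$. The initial slice is estimated by $\Psi \leq N$ together with hypothesis \eqref{L:EPT:ass}, giving a bound of $N^2\,\frac{1-\nu_0}{1-\nu_0/2}\,|K_{4\rho}|$. The gradient term is controlled by $|\nabla\zeta|^2 \leq C/(\sigma\rho)^2$, the length bound $t_0 \leq k^{-\beta}(4\rho)^2$, and the case dichotomy \eqref{small mu-}, which forces $\mu_+^\beta \leq C k^\beta$ in both branches; the contribution is at most $CN\sigma^{-2}|K_{4\rho}|$. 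The drift contribution, after applying the crude bound $A^+_{k,4\rho}(\tau)\leq |K_{4\rho}|$ and plugging $t_0 \leq k^{-\beta}(4\rho)^2$ into the time integral, collapses via the identity $2(1+\kappa)/q_1 = 1 - 1/\hat q_1$ (and its $q_2$ analogue) and \eqref{hatq12} to the clean expression
\[
\frac{C_\infty N}{\delta_1^2}\,\|B\|^2_{L^{2\hat q_1}_t L^{2\hat q_2}_x(Q_1)}\,|K_{4\rho}|\,\rho^{d\kappa} k^{-2-\beta\cdot 2(1+\kappa)/q_1},
\]
with $C_\infty$ depending on $\|v\|_{L^\infty}$. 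This is exactly where the alternative enters: if $k^{-2-\beta\cdot 2(1+\kappa)/q_1}\rho^{d\kappa}\geq \delta_1^2$ we are in case \eqref{L:EPT:kr} and have nothing to prove; otherwise this whole contribution is at most $C_\infty N\,\|B\|^2\,|K_{4\rho}|$, which is $o(N^2)|K_{4\rho}|$ as $\delta_1\to 0$.

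On the left-hand side, for $x\in K_{4\rho(1-\sigma)}$ with $v(x,t) > \mu_+ - \delta_1 k$, the inequality $\Psi(v)\geq \ln(1/(2\delta_1))$ follows directly from the definition \eqref{Psi}. Dividing through by $(\ln(1/(2\delta_1)))^2 |K_{4\rho}|$ and using $\ln(1/(2\delta_1))/N \to 1$ as $\delta_1\to 0$ yields
\[
\frac{|\{v(\cdot,t)>\mu_+-\delta_1 k\}\cap K_{4\rho(1-\sigma)}|}{|K_{4\rho}|}
\leq \frac{1-\nu_0}{1-\nu_0/2} + \frac{C}{\sigma^2 N} + \frac{C_\infty\|B\|^2}{N}.
\]
Adding the easy estimate $|K_{4\rho}\setminus K_{4\rho(1-\sigma)}|\leq Cd\sigma|K_{4\rho}|$ gives the full measure on $K_{4\rho}$. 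Since $\frac{1-\nu_0}{1-\nu_0/2} = 1 - \frac{\nu_0/2}{1-\nu_0/2} \leq 1 - \nu_0/2$, I choose $\sigma$ small (depending only on $\nu_0$) so that $Cd\sigma \leq \nu_0/8$, and then $\delta_1$ small (depending on $\nu_0$, $\sigma$, $\|v\|_{L^\infty}$, $\|B\|$, $m$) so that the remaining $N^{-1}$ errors are at most $\nu_0/8$. The total then sits below $1-\nu_0^2/4$ as required by \eqref{L:EPT:res}, valid for every $t\in(-t_0,0]$ since the argument is uniform in $t$.

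The real obstacle is the drift term: it is the only one without an intrinsic smallness mechanism from cutoffs or the initial slice, and it naturally carries a factor $k^{-2-\beta\cdot 2(1+\kappa)/q_1}\rho^{d\kappa}$ which can be of arbitrary size. The subcritical condition $\kappa>0$ is precisely what gives this factor a chance to be small, and the bifurcation into the two alternative conclusions of the lemma is exactly the trade-off: either the factor is already as bad as $\delta_1^2$, giving \eqref{L:EPT:kr} outright, or it is small enough that the log-energy machinery can do its job and deliver \eqref{L:EPT:res}.
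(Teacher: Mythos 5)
Your proposal is correct and follows essentially the same route as the paper: both apply the logarithmic energy estimate of Proposition~\ref{P:LE} with a time-independent spatial cutoff concentrated near $K_{4\rho}$, use the bounds $\Psi \leq \ln(1/\delta_1)$ on the initial slice and $\Psi \geq \ln(1/(2\delta_1))$ on the target set, bifurcate on whether $k^{-2-\beta\,2(1+\kappa)/q_1}\rho^{d\kappa}\geq\delta_1^2$ to tame the drift contribution, and close by balancing the annulus term $O(\sigma)$ against the $O(1/\ln(1/\delta_1))$ errors to beat $1-\nu_0^2/4$. The only differences are cosmetic (you parameterize by $N=\ln(1/\delta_1)$ and $\sigma$ rather than the paper's $\delta_1=2^{-j}$ and $\lambda$).
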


\begin{proof}
Write $r=4\rho$ for simplicity.
We apply the logarithmic energy estimates from Proposition~\ref{P:LE}. For some $\lambda\in(0,1)$ to be determined later, denote that $\zeta$ is a linear cutoff function independent of the time variable, such that
\begin{equation*}
\zeta = \begin{cases}
 1 &\text{ in } K_{(1-\lambda)r} \times  [ -t_0, -t], \\
 0 &\text{ on the lateral boundary of } K_{r} \times  [ -t_0, -t].
 \end{cases}
\end{equation*}
for any $-t\in (-t_0, 0]$,  satisfying
\[
0\leq \zeta \leq 1, \quad |\nabla \zeta| \leq \frac{1}{\lambda r}, \quad \zeta_t = 0.
\]

Let $\delta_1 = 2^{-j}$ for $j$ a positive integer to be chosen later large enough. From the setting of $\Psi(v)$ in \eqref{Psi}, we have
\[
\Psi(v) \leq \ln \frac{1}{\delta_1} = j \ln 2, \quad \text{and} \quad \Psi'(v) \leq \frac{1}{\delta_1 k}.
\]
Moreover, in the set $\{ v > \mu_{+} -\delta_1 k \}$, it is obvious to have
\[
(v - \mu_{+} + k)_{+} > (1-\delta_1) k
\]
which gives
\[
\Psi(v) \geq \ln \frac{1}{2\delta_1} = (j-1) \ln 2.
\]

Now, we start from the logarithmic estimate \eqref{E:LE}. The various estimates of $\Psi(v)$, $\Psi'(v)$, the properties of $\zeta$, and \eqref{L:EPT:ass} yield the following inequality:
\begin{equation}\label{EPT01}\begin{split}
 & (j-1)^2 (\ln 2)^2 \left| \{ x\in K_{(1-\lambda)r}: v(x,t) > \mu_{+} - \delta_1 k\} \right| \\
&\leq j^2 (\ln 2)^2 \left(\frac{1-\nu_0}{1-\nu_0/2}\right)|K_r|
 + \frac{C (j \ln 2)}{\lambda^2 r^2} \mu_+^\beta|K_r| |t-t_0| \\
& +C (j\ln 2) \|B\|^{2}_{L_{t}^{2\hat{q}_1} L_{x}^{2\hat{q}_2}}\left(\frac{\mu_{+}^{1/m}}{\delta_1 k}+\frac{\mu_{+}^{1+1/m}}{\delta_1^2 k^{2}}\right)  |K_{r}|^{\frac{2(1+\kappa)}{q_2}} |t-t_0|^{\frac{2(1+\kappa)}{q_1}}.
\end{split}
\end{equation}

{Note that $t-t_0\leq k^{-\beta}r^2$. If $\mu_-\leq k$, we know $k$ is chosen to be proportional to $\mu_+$. } So
in the case either $\beta=0$ or $\mu_-\leq \frac{\omega}{4}$, we simplify \eqref{EPT01} to the following:
\begin{equation}\label{EPT02}\begin{split}
 & (j-1)^2 (\ln 2)^2 \left| \{ x\in K_{(1-\lambda)r}: v(x,t) > \mu_{+} - \delta_1 k\} \right| \\
&\leq j^2 (\ln 2)^2 \left(\frac{1-\nu_0}{1-\nu_0/2}\right)|K_r|
 + \frac{C  (j \ln 2) }{\lambda^2 } |K_r|  \\
& +C (j\ln 2) \|B\|^{2}_{L_{t}^{2\hat{q}_1} L_{x}^{2\hat{q}_2}}
\left(\frac{\mu_+^{1+\frac{1}{m}}}{\delta_1^2}\right) k^{-2 - \frac{2\beta(1+\kappa)}{q_1}} r^{d\kappa} |K_r|.
\end{split}
\end{equation}

When \eqref{L:EPT:kr} holds, then it implies \eqref{L:FA:result1} which is H\"{o}lder continuous. Let us assume that
\begin{equation}\label{kr1}
   k^{-2 - \frac{2\beta(1+\kappa)}{q_1}} r^{d\kappa} \leq \delta_1^2 .
\end{equation}

Therefore, the following is from \eqref{EPT02} applying \eqref{kr1} and \eqref{L:EPT:ass}:
\begin{equation}\label{EPT03}\begin{split}
 & (j-1)^2 (\ln 2)^2 \left| \{ x\in K_{(1-\lambda) r}: v(x,t) > \mu_{+} - \delta_1 k\} \right| \\
&\leq j^2 (\ln 2)^2 \left(\frac{1-\nu_0}{1-\nu_0/2}\right)|K_r|
 + \frac{C  (j \ln 2) }{\lambda^2 } |K_r|  + C j\ln 2 |K_r|.
\end{split}
\end{equation}
By dividing \eqref{EPT03} by $(j-1)^2 (\ln 2)^2$ and considering the geometric property of the set $ |K_r \setminus K_{(1-\lambda)r}|\leq d\lambda$, it follows:
\begin{equation}\label{claim2:04}\begin{split}
 & \left| \{ x\in K_r: v(x,t) > \mu_{+} - \delta_1 k\} \right| \\
&\leq \left[ \frac{j^2}{(j-1)^2} (\frac{1-\nu_0}{1-\nu_0 / 2})
 + \frac{C  j }{ (j-1)^2\lambda^2 }  +  \frac{C j}{(j-1)^2 }  + d\lambda\right]|K_r|.
\end{split}
\end{equation}

Then we make a choice of $j$ large enough such that
\[
\frac{j^2}{(j-1)^2} \leq (1-\frac{\nu_0}{2})(1+\nu_0),
\]
and
\[ \frac{C  (j \ln 2) }{ (j-1)^2\lambda^2 }\leq \frac{ \nu_0^2}{4},\ \frac{C j}{(j-1)^2 \ln 2}\leq \frac{ \nu_0^2}{4}, \ d\lambda \leq \frac{ \nu_0^2}{4}.
\]
The above is true with the choice of $\lambda = \frac{ \nu_0^2}{4d}$ and then $j$ large enough depending only on $\nu_0,d$. This completes the proof.


\end{proof}

The conclusion of Lemma~\ref{L:EPT} says that the portion where $v$ is greater than some constant is controlled at each time slice. To obtain eventual pointwise estimates of $v$, we need to take the following step (so-called the expansion of positivity in space) that implies a certain level for having arbitrary control by any $\nu_1 \in (0,1)$ over the set where $v$ is bigger than the level based on the measure control at each time \eqref{L:EPT:res}. The following lemma is parallel to \cite[Lemma~4.4, Corolloary~4.5, Lemma~4.6]{DB83}, \cite[Proposition~4.7]{CHKK} and \cite[Section~4]{DBGiVe06}.



\begin{lemma}\label{L:EPX}(Expansion of positivity in space)
Under the conditions of Lemma \ref{L:EPT}.
For any $\nu_1\in(0,1)$, there exists $\delta^\ast = \delta^\ast (\nu_1)<\delta_1$ such that either
\begin{equation}\label{L:EPX:kl}
  k^{-2 - \frac{2\beta(1+\kappa)}{q_1}} \rho^{d\kappa} > {\delta^\ast}^2,
\end{equation}
or
\begin{equation}\label{L:EPX:res}
\left| (x,t)\in Q_{k,2\rho}({\nu_0}): v > \mu_{+} - \delta^\ast k \} \right| \leq \nu_1 \left|Q_{k,2\rho}({\nu_0})\right|.
\end{equation}
\end{lemma}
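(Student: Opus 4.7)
The plan is to run a De~Giorgi type iteration on a dyadic sequence of levels approaching $\mu_+$, combining the pointwise-in-time measure information from Lemma~\ref{L:EPT} with the isoperimetric inequality (Lemma~\ref{Iso}) to transfer measure estimates to $L^1$ estimates of $\nabla v$, and then the energy estimate of Proposition~\ref{P:EE} to turn these into the desired parabolic measure estimate. I will assume throughout that \eqref{L:EPX:kl} fails, i.e.\ $k^{-2-2\beta(1+\kappa)/q_1}\rho^{d\kappa}\le {\delta^*}^2$, which is what allows the drift contribution in the energy estimate to be absorbed; the number $\delta^*=2^{-j^*}$ will be fixed at the end by choosing $j^*$ large enough.

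For integers $j\geq j_1$ (with $\delta_1=2^{-j_1}$ from Lemma~\ref{L:EPT}), set $k_j:=2^{-j}k$ and
\[
A_j(t):=\{x\in K_{4\rho}:\,v(x,t)>\mu_+-k_j\},\qquad \mathcal{A}_j:=\int_{-\nu_0 k^{-\beta}(2\rho)^2}^{0}|A_j(t)\cap K_{2\rho}|\,dt.
\]
Note the time interval defining $\mathcal{A}_j$ lies inside $(-t_0,0]$ because of the lower bound on $t_0$ in Lemma~\ref{L1}. Since $k_j\le \delta_1 k$, Lemma~\ref{L:EPT} yields $|K_{4\rho}\setminus A_j(t)|\geq (\nu_0^2/4)|K_{4\rho}|$ for every $t\in(-t_0,0]$. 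Applying the isoperimetric inequality on $K_{4\rho}$ at each such $t$ with the pair $(\mu_+-k_j,\mu_+-k_{j+1})$, and using $k_j-k_{j+1}=k_{j+1}$, gives
\[
k_{j+1}\,|A_{j+1}(t)\cap K_{2\rho}|\le C(\nu_0,d)\,\rho\int_{\{\mu_+-k_j<v<\mu_+-k_{j+1}\}(t)}|\nabla v|\,dx.
\]

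Integrating in $t$ over the time interval of $\mathcal{A}_{j+1}$ and applying Cauchy--Schwarz,
\[
k_{j+1}\,\mathcal{A}_{j+1}\le C\rho\,(\mathcal{A}_j-\mathcal{A}_{j+1})^{1/2}\Bigl(\iint_{Q_{k,4\rho}\cap\{v>\mu_+-k_j\}}|\nabla v|^2\,dx\,dt\Bigr)^{1/2}.
\]
The last integral is controlled by Proposition~\ref{P:EE} applied to $(v-\mu_++k_j)_+$ with a cutoff $\zeta$ equal to $1$ on $Q_{k,2\rho}(\nu_0)$ and supported in $Q_{k,4\rho}$, so that $|\nabla\zeta|\lesssim 1/\rho$ and $|\zeta_t|\lesssim k^\beta/\rho^2$. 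Under \eqref{case1}--\eqref{small mu-}, the first two terms on the right of \eqref{energy 1} are bounded by $Ck_j^2\rho^{-2}|Q_{k,4\rho}|$, and after the intrinsic rescaling employed in the proof of Proposition~\ref{L:FA} the drift term becomes $C\|B\|_{L_t^{2\hat q_1}L_x^{2\hat q_2}}^2\,k^{-2-2\beta(1+\kappa)/q_1}\rho^{d\kappa}\cdot k_j^2\rho^{-2}|Q_{k,4\rho}|$, which by the negation of \eqref{L:EPX:kl} is dominated by the first two terms. Hence $\iint|\nabla v|^2\le C(\nu_0)\,k_j^2\rho^{-2}|Q_{k,4\rho}|$.

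Substituting back and using $k_{j+1}=k_j/2$ gives the recursion $\mathcal{A}_{j+1}^2\le C(\nu_0)\,|Q_{k,4\rho}|\,(\mathcal{A}_j-\mathcal{A}_{j+1})$. Summing over $j=j_1,\dots,j^*-1$, the right-hand side telescopes, giving $(j^*-j_1)\mathcal{A}_{j^*}^2\le C(\nu_0)|Q_{k,4\rho}|^2$, so
\[
\mathcal{A}_{j^*}\le \frac{C(\nu_0,d)\,|Q_{k,2\rho}(\nu_0)|}{\sqrt{j^*-j_1}}.
\]
Choosing $j^*=j^*(\nu_1,\nu_0,d)$ large enough makes the right-hand side $\le \nu_1|Q_{k,2\rho}(\nu_0)|$, and we set $\delta^*=2^{-j^*}$. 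The key technical obstacle is controlling the drift term in the energy estimate: the prefactor $\mu_+^{2/m}$ and the bound $(\mu_+-k_j)^{-\beta}$ must be dominated using the dichotomy $\mu_-\le k$ (whence $\mu_+\sim k$ and $(\mu_+-k_j)^{-\beta}\lesssim k^{-\beta}$) or $\beta=0$, so that the drift really scales as the subcritical smallness parameter in \eqref{L:EPX:kl} and can be absorbed uniformly in $j$.
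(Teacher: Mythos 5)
Your proposal follows the paper's strategy closely: dyadic levels $k_j=2^{-j}k$, the slicewise measure bound from Lemma~\ref{L:EPT}, the isoperimetric inequality, Cauchy--Schwarz, the energy estimate of Proposition~\ref{P:EE}, and a telescoping sum in $j$, with the drift absorbed via the negation of \eqref{L:EPX:kl} and the dichotomy \eqref{small mu-}. There is, however, a genuine gap in matching the domain of the isoperimetric inequality to the domain where the energy estimate gives gradient control. You apply Lemma~\ref{Iso} on $K_{4\rho}$ (a natural choice, since Lemma~\ref{L:EPT} furnishes the required lower bound $|K_{4\rho}\cap\{v\le\mu_+-k_j\}|\gtrsim \nu_0^2|K_{4\rho}|$ there), so the resulting spatial gradient integral runs over all of $K_{4\rho}$. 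But a cutoff $\zeta$ that vanishes on $\partial_p Q_{k,4\rho}$ cannot equal $1$ on $K_{4\rho}$, and \eqref{energy 1} controls only $\iint|\nabla(v_+^j\zeta)|^2$, i.e.\ essentially $\iint|\nabla v_+^j|^2\zeta^2$: the behavior of $\nabla v_+^j$ where $\zeta$ is small is uncontrolled. Hence your claimed bound $\iint_{Q_{k,4\rho}\cap\{v>\mu_+-k_j\}}|\nabla v|^2\le Ck_j^2\rho^{-2}|Q_{k,4\rho}|$ does not follow. A related mismatch occurs in the Cauchy--Schwarz step: the measure factor must be the measure of the transition annulus $\{\mu_+-k_j<v\le\mu_+-k_{j+1}\}$ inside $K_{4\rho}\times(-t_1,0]$, not the smaller quantity $\mathcal{A}_j-\mathcal{A}_{j+1}$ you defined over $K_{2\rho}$, so the displayed inequality is strictly stronger than what Cauchy--Schwarz yields.

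The repair is to take all three domains --- the measure lower bound, the isoperimetric integral, and the level-set measures $\mathcal{A}_j$ --- on a single intermediate ball $K_\sigma$ with $2\rho\le\sigma<4\rho$: choose $\sigma$ close enough to $4\rho$ that the lower bound from Lemma~\ref{L:EPT} transfers from $K_{4\rho}$ to $K_\sigma$ (this costs only a fixed geometric factor, absorbable into $j^*$), and let $\zeta\equiv 1$ on $K_\sigma\times(-t_1,0]$ while vanishing on $\partial_p Q_{k,4\rho}$. The paper takes the isoperimetric inequality on the inner ball $K_{2\rho}$, precisely so that the gradient integral lands where $\zeta\equiv 1$ and the energy estimate is effective (the same care about which ball carries the measure lower bound is implicitly needed there as well); once the ball is fixed consistently, the rest of your computation --- Cauchy--Schwarz, telescoping, and choosing $j^*=j^*(\nu_1,\nu_0,d)$ --- goes through as you describe.
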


\begin{proof}
Write $r=2\rho$ and $t_1:={\nu_0}k^{-\beta}r^2 $. By definition, we have $2t_1\leq t_0$. It follows from Lemma \ref{L:EPT} that if \eqref{L:EPX:kl} fails, then
\begin{equation}\label{L:EPX:ass}
\left| \{x\in K_{2r}: v(x,t) > \mu_{+}-\delta_1k \} \right| \leq (1-\frac{\nu_0^2}{4})|K_{2r}|
\end{equation}
for all $t\in [-2t_1, 0]$.

Let $j = \ceil*{\log_2\frac{1}{\delta_1}}, \ldots, j^{*}$ where $j^{*}$ is to be determined later.
For each such $j$, let $k_{j} = 2^{-j}k$ and so $k_j\leq \delta_1k$.  Denote $\delta^\ast = 2^{-j^\ast}$, so $k_{j^\ast}=\delta^\ast k$.
For simplicity, let us name
\begin{gather*}
v_{+}^{j} = (v - \mu_{+} + k_{j})_{+}, \\
A_{j} (t) = \{ x\in K_{r} : \, v(x,t) > \mu_{+} - k_{j}\}, \\
A_{j} = \{ (x,t)\in K_{r}\times [-t_1, 0] : \, v(x,t) > \mu_{+} - k_{j}\}.
\end{gather*}

Let $\zeta$ be a piecewise linear cutoff function
\[
\zeta = \begin{cases}
 1 &\text{ in } K_{r}\times [-t_1, 0], \\
 0 & \text{ on the parabolic boundary of } K_{2r}\times [-2 t_1, 0],
 \end{cases}
\]
satisfying
\[
0\leq \zeta \leq 1, \quad |\nabla \zeta|\leq \frac{2}{r}, \quad
\zeta_{t} \leq \frac{1}{t_1} = \frac{2k^\beta}{\nu_0r^2}.
\]

It is trivial that
\[
 v_{+}^{j} \leq k_{j} \leq k, \quad \text{ and } \quad  \mu_{+} - k \leq \mu_{+} - k_{j} \leq v \leq \mu_{+}.
\]
Then the local energy estimate \eqref{energy 1} from Proposition~\ref{P:EE}, ignoring the supremum term on the left hand side because of its positivity, leads to the following inequality:
\begin{equation}\label{EPX01}\begin{split}
&\quad \int_{-2t_1}^{0}\int_{K_{2r}} \abs{\nabla (v_+^{j}\,\zeta)}^2 \,dx\,dt \\
&\leq C(\mu_+-k)^{-\beta} \int_{-2t_1}^{0}\int_{K_{2r}} (v_{+}^j)^2|\zeta\zeta_t| \,dx\,dt
 +C \int_{-2t_1}^{0}\int_{K_{2r}} (v_{+}^j)^2 |\nabla\zeta|^2\,dx\,dt \\
&\quad + C\mu_{+}^{2/m} \|B\|^{2}_{L_{t}^{2\hat{q}_1} L_{x}^{2\hat{q}_2}} \left[ \int_{-2t_1}^{0} \left| K_{2r} \cap \{ v(x,t) > \mu_{+} - k_j \} \right|^{\frac{q_1}{q_2}} \,dt\right]^{\frac{2(1+\kappa)}{q_1}}.
\end{split}\end{equation}

By applying properties of $\zeta$, the inequality \eqref{EPX01} becomes
\begin{equation}\label{EPX02}\begin{split}
&\int_{-2t_1}^{0}\int_{K_{2r}} \abs{\nabla (v_+^{j}\,\zeta)}^2 \,dx\,dt \\
&\leq {C} \left(\frac{k}{\mu_+-k}\right)^{\beta} \frac{k_j^2}{\nu_0 r^2} |K_r \times [-2t_1, 0]|+ C \frac{k_j^2}{r^2} |K_r \times [-2t_1, 0]| + C\|B\|^{2}_{L_{t}^{2\hat{q}_1} L_{x}^{2\hat{q}_2}}\mathcal{I}.
\end{split}\end{equation}
We note that in the case either $\mu_-\leq k$ or $\beta=0$, $ (\frac{k}{\mu^+ - k})^\beta \leq  C$ for some universal $C$.
Also we observe that
\[\begin{split}
\mathcal{I} &= \mu_{+}^{2/m} \left[ \int_{-2t_1}^{0} \left| K_{2r} \cap \{ v(x,t) > \mu_{+} - k_j \} \right|^{\frac{q_1}{q_2}} \,dt\right]^{\frac{2(1+\kappa)}{q_1}} \\
&\leq \mu_{+}^{2/m} |K_{2r}|^{\frac{2(1+\kappa)}{q_2}} (2t_1)^{\frac{2(1+\kappa)}{q_1}} \\
&\leq  2^{2j} k^{-2-\beta  \frac{2(1+\kappa)}{q_1} } r^{d\kappa} \frac{k_j^2}{r^2} |K_r \times [-2t_1, 0]|
\end{split}\]
because $k$ is proportional to $\omega$.

Let
\begin{equation}\label{kr2}
  k^{-2-\beta \frac{2(1+\kappa)}{q_1} } r^{d\kappa} < {\delta^\ast}^2.
\end{equation}
Otherwise, we have \eqref{L:EPX:kl} which later yields \eqref{L:FA:result1}.
We have a bound of $\mathcal{I}$ which gives
\begin{equation}\label{EPX03}
\int_{-2t_1}^{0}\int_{K_{2r}} \abs{\nabla (v_+^{j}\,\zeta)}^2 \,dx\,dt
\leq C \frac{k_j^2}{\nu_0 r^2} |K_r \times [-2t_1, 0]|.
\end{equation}

Now we apply Lemma~\ref{Iso} with $k = \mu_{+} -k_j$ and $l = \mu_{+} - k_{j+1}$ which provides
\begin{equation}\label{EPX04}
\begin{aligned}
(k_{j}-k_{j+1}) |A_{j+1}(t)|& \leq \frac{\gamma r^{d+1}}{  | K_{r}\cap\{ v\leq \mu_+- k_j \}|} \int_{A_{j}(t)\setminus A_{j+1}(t)} |\nabla v_{+}^{j}| \,dx\\
&\leq  \frac{\gamma r^{d+1}}{ \alpha | K_{r}|} \int_{A_{j}(t)\setminus A_{j+1}(t)} |\nabla v_{+}^{j}| \,dx
\end{aligned}
\end{equation}
where $\gamma$ is a universal constant and $\alpha:=\frac{\nu_0^2}{4}$. In the second inequality, we used \eqref{L:EPX:ass} and the fact that $k_j<\delta_1k$.

Next by taking integration of \eqref{EPX04} over the time variable $t\in [-t_1, 0]$ and by dividing the result inequality by $|A_{j}\setminus A_{j+1}|$, it yields
\begin{equation}\label{EPX05}\begin{split}
\frac{k}{2^{j+1}}\frac{|A_{j+1}|}{|A_{j}\setminus A_{j+1}|}
&\leq \frac{\gamma r}{ \alpha} \frac{1}{|A_{j}\setminus A_{j+1}|}\iint_{A_{j}\setminus A_{j+1}} |\nabla v_{+}^{j}| \,dx\,dt \\
&\leq \frac{\gamma r}{\nu_0 \alpha} \left(\frac{1}{|A_{j}\setminus A_{j+1}|}\iint_{A_{j}\setminus A_{j+1}} |\nabla v_{+}^{j}|^{2} \,dx\,dt \right)^{\frac{1}{2}}.
\end{split}\end{equation}

Then we square \eqref{EPX05} and rearrange the inequality to obtain
\begin{equation}\label{EPX06} \begin{split}
|A_{j+1}|^2
&\leq \frac{\gamma^2 r^2}{\alpha^2 k_j^2}|A_{j}\setminus A_{j+1}|\iint_{A_{j}\setminus A_{j+1}} |\nabla v_{+}^{j}|^{2} \,dx\,dt \\
&\leq \frac{4\gamma^2}{\alpha^2}  |A_{j}\setminus A_{j+1}| |K_{r}\times [-2t_1, 0]|
\end{split}\end{equation}
because of \eqref{EPX03}.

Now we divide \eqref{EPX06} by $|K_{r}\times [-t_1, 0]|^2$
\[
\left(\frac{|A_{j+1}|}{|K_{r}\times [-t_1, 0]|}\right)^2 \leq \frac{4\gamma^2}{\nu_0\alpha^2}  \frac{|A_{j}\setminus A_{j+1}| }{|K_r\times [-2t_1, 0]|}.
\]
Let us take the summation for $j$ from $\ceil*{\log_2\frac{1}{\delta}}$ to $j^\ast$. Because $A_{j^\ast} \subset A_{j}$ for all $j < j^\ast$, we have
\[
(j^\ast -\ceil*{\log_2\frac{1}{\delta_1}})\left(\frac{|A_{j^\ast}|}{|K_{r}\times [-2t_1, 0]|}\right)^2 \leq \frac{4\gamma^2}{\nu_0 \alpha^2},\]
equivalently,
\begin{equation}\label{EPX07}
|A_{j^\ast}| \leq \sqrt{ \frac{4\gamma^2}{\nu_0\alpha^2 \, (j^\ast-\ceil*{\log_2\frac{1}{\delta_1}})} } \ \abs{K_{r}\times [-2t_1, 0]}.
\end{equation}
By choosing $j^\ast$ large enough such that
\[
\sqrt{ \frac{4\gamma^2}{\nu_0\alpha^2 \, (j^\ast-\ceil*{\log_2\frac{1}{\delta_1}})} } \leq \nu_1, \quad \text{equivalently } \quad j^\ast \geq \frac{16\gamma^2}{\nu_0^3 \nu_1^2}+\ceil*{\log_2\frac{1}{\delta_1}},
\]
we reach our conclusion \eqref{L:EPX:res} directly from \eqref{EPX07}.
\end{proof}

To conclude the second alternative, we take DeGiorgi type iteration with a upper-level set to say $v$ is strictly away from its maximum $\mu_{+}$ in the subcylinder $Q_{k,\rho}$ in $Q_{k, 4\rho}$ that both share the same vertex.

\begin{proposition}\label{L:SA} (Second alternative)
Let $v$ be a bounded non-negative weak solution of \eqref{main v} in $Q_{k, 4\rho}$. Suppose \eqref{case1}, \eqref{small mu-} and \eqref{L1:ass} hold.
Then there exists $\delta^\ast(\nu_0)<\frac{1}{4}$ such that either
\begin{equation}\label{L:FA:result3}
  k^{-2-\beta \frac{2(1+\kappa)}{q_1}} \rho^{d\kappa  }>{\delta^\ast}^2,
\end{equation}
or
\begin{equation}\label{L:SA:res}
  \esssup_{Q_{k, \rho} (\nu_0)} v(x,t) < \mu^{+} - \frac{\delta^\ast}{2}k.
\end{equation}
Here $Q_{k,\rho}(\nu_0)$ is given in \eqref{Q}.
\end{proposition}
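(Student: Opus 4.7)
The plan is to apply Lemma~\ref{L:EPX} to upgrade the measure information coming from Lemma~\ref{L1} into a smallness bound for $\{v>\mu_+-\delta^* k\}$ within the cylinder $Q_{k,2\rho}(\nu_0)$, and then to run a De~Giorgi iteration on nested upper-level sets that upgrades this further into a pointwise bound on the smaller cylinder $Q_{k,\rho}(\nu_0)$. Under hypotheses \eqref{case1}, \eqref{small mu-}, \eqref{L1:ass}, Lemma~\ref{L:EPX} applied with some $\nu_1\in(0,1)$ to be chosen yields either \eqref{L:EPX:kl}, from which \eqref{L:FA:result3} follows after possibly shrinking $\delta^*$, or the measure bound \eqref{L:EPX:res}. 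The substantive task is to treat the latter case.

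In the latter case, I would set up a standard De~Giorgi iteration: nested cylinders $Q_j:=K_{\rho_j}\times(-\nu_0 k^{-\beta}\rho_j^2,0]$ with $\rho_j=\rho(1+2^{-j})$ (so $\rho_0=2\rho$ and $\rho_\infty=\rho$), increasing truncation levels $k_j=(\delta^*/2)(1+2^{-j})k$ (so $k_0=\delta^* k$ and $k_\infty=\delta^* k/2$), and piecewise linear cutoffs $\zeta_j$ vanishing on $\partial_p Q_j$, identically $1$ on $Q_{j+1}$, with $|\nabla\zeta_j|\le C2^j/\rho$ and $|\partial_t\zeta_j|\le C2^{2j}k^\beta/\rho^2$. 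Feed $v_+^j:=(v-\mu_++k_j)_+$ into the local energy estimate \eqref{energy 1} of Proposition~\ref{P:EE}, using $v_+^j\le k_j\le \delta^* k$ and the fact that \eqref{small mu-} makes $(\mu_+-k_j)^{-\beta}$ comparable to $k^{-\beta}$, so that the degenerate factor behaves exactly as in the proof of Proposition~\ref{L:FA}.

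After the intrinsic change of variable $\bar t=k^\beta t$, Proposition~\ref{P:admissible} together with Theorem~\ref{T:Sobolev} then yields, for
$$Y_j:=\frac{|\{(x,t)\in Q_j:\, v>\mu_+-k_j\}|}{|Q_j|},\qquad Z_j:=\frac{1}{|K_{\rho_j}|}\left[\int_{-k^{-\beta}\rho_j^2}^{0}|A^+_{k_j,\rho_j}(t)|^{q_1/q_2}\,dt\right]^{2/q_1},$$
recursive inequalities of the form
$$Y_{j+1}\le Cb^{j}\bigl(Y_j^{1+2/(d+2)}+Z_j^{1+\kappa}Y_j^{2/(d+2)}\bigr),\qquad Z_{j+1}\le Cb^{j}\bigl(Y_j+Z_j^{1+\kappa}\bigr),$$
precisely matching the hypotheses of Lemma~\ref{L:iter2}. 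The drift contribution produces the factor $\|B\|^{2}_{L^{2\hat q_1}_tL^{2\hat q_2}_x}\,k^{-2-\beta\cdot 2(1+\kappa)/q_1}\rho^{d\kappa}$; once \eqref{L:FA:result3} is assumed to fail this is bounded by $(\delta^*)^{2}\|B\|^{2}$ and is absorbed into the universal constant $C$.

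From Lemma~\ref{L:EPX} the initial data satisfy $Y_0\le\nu_1$, and $Z_0$ is bounded by a power of $\nu_1$ via H\"{o}lder's inequality in time. Since the iteration constants $C,b$ depend only on $d,\kappa,m,\|B\|_{L^{2\hat q_1}_tL^{2\hat q_2}_x}$, the threshold in Lemma~\ref{L:iter2} is a fixed positive number, and choosing $\nu_1$ correspondingly small forces $Y_j\to 0$; since $Y_\infty$ measures the fraction of $Q_{k,\rho}(\nu_0)$ where $v>\mu_+-k_\infty=\mu_+-\delta^* k/2$, this yields \eqref{L:SA:res}. The main obstacle is the careful bookkeeping of the drift: verifying that the scaling factor $k^{-2-\beta\cdot 2(1+\kappa)/q_1}\rho^{d\kappa}$ appears as an overall multiplier so that failure of \eqref{L:FA:result3} precisely neutralizes it, and arranging the order of choices. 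Since $\delta^*=\delta^*(\nu_1)$ is produced by Lemma~\ref{L:EPX}, one must check that $\nu_1$ can be fixed \emph{before} the iteration is launched, which is possible because the Lemma~\ref{L:iter2} threshold does not depend on $\delta^*$ itself.
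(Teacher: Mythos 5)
Your proposal is correct and follows essentially the same route as the paper's proof: apply Lemma~\ref{L:EPX} (which itself is a consequence of Lemmas~\ref{L1} and~\ref{L:EPT}) to obtain the measure bound \eqref{L:EPX:res} on $Q_{k,2\rho}(\nu_0)$ unless \eqref{L:FA:result3} intervenes, then run a De~Giorgi iteration on the upper truncations $h_i=\tfrac{\delta^*k}{2}(1+2^{-i})$ in the nested cylinders $K_{\rho_i}\times(-\nu_0 k^{-\beta}\rho_i^2,0]$, feeding $v_+^i=(v-\mu_++h_i)_+$ into Proposition~\ref{P:EE}, changing variables to the intrinsic time $\bar t=k^\beta t$, and invoking Theorem~\ref{T:Sobolev}, Proposition~\ref{P:admissible} and Lemma~\ref{L:iter2}; the ordering of constants (fix the threshold of Lemma~\ref{L:iter2} first, then choose $\nu_1$, then let Lemma~\ref{L:EPX} return $\delta^*$) is exactly the point the paper emphasizes with ``this is equivalent to that $\nu_1$ is small enough.'' The one imprecision worth noting is in the bookkeeping of the drift factor: since one divides by $(h_i-h_{i+1})^2\sim(\delta^*k/2^{i+2})^2$ rather than by $(k/2^{i+2})^2$, the resulting coefficient carries an extra $(\delta^*)^{-2}$, and it is this $(\delta^*)^{-2}$ that is cancelled by the $(\delta^*)^{2}$ coming from the failure of \eqref{L:FA:result3}, leaving $\|B\|^2$ rather than $(\delta^*)^2\|B\|^2$; the conclusion (absorption into the universal constant $C$) is nonetheless the same.
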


\begin{proof}
Suppose \eqref{L:FA:result3} fails. Then following from Lemma \ref{L:EPX}, we know for any $\nu_1 \in ( 0,1 )$, there exists $\delta^\ast(\nu_1)$ such that
\begin{equation}\label{L:SA:1}
\left|\left\{Q_{k,2 \rho}({\nu_0}): v(x,t) > \mu_{+} - \delta^\ast k\right\}\right|  < \nu_1 |Q_{k, 2\rho}(\nu_0)|.
\end{equation}
We are going to select $\nu_1$ below which only depends on $\nu_0$.

The idea of the proof is similar to the first alternative.
Here we carry details of how to take the DeGiorgi iteration with an upper-level set.

Denote
\[h:=\delta^\ast k,\quad h_i:=\frac{h}{2}+\frac{h}{2^{i+1}}.\]
Let us consider a upper-level set $(v -\mu_{+} +h)_{+}$ and apply the local energy estimate in the form of \eqref{energy 1} from Proposition~\ref{P:EE}, with the sequences
\begin{gather*}
\rho_{i} = {\rho} + \frac{\rho}{2^{i}}, \quad
K_{i}= K_{\rho_i}, \quad Q_{i} =K_{i}\times (-\nu_0 k^{-\beta}
\rho_{i}^{2}, 0]
\end{gather*}
and a sequence of smooth cutoff functions
$\{ \zeta_{i}\}$ such that
\begin{equation*}
\zeta_{i} = \begin{cases}
            1 & \text{ in } Q_{i+1} \\
            0 & \text{ on the parabolic boundary of } Q_{i},
 \end{cases}
\end{equation*}
satisfying
\[
|\nabla \zeta_{i}| \leq \frac{2^{i+2}}{\rho}, \quad \text{ and } \quad
\partial_{t} \zeta_{i} \leq \frac{2^{2(i+2)}}{ \nu_0 k^{-\beta} \rho^{2}}.
\]

Let $v_{+}^{i} = (v-\mu_{+}+h_i)_{+}$, then it is trivial that $v_{+}^{i}$ means the set where
\[ \mu_{+} - h \leq \mu_{+} - h_{i} < v \leq \mu_{+} . \]
Moreover it follows
\begin{equation}\label{EE11}\begin{split}
&\quad \mu_+^{-\beta}\sup_{ - \nu_0 k^{-\beta} \rho_{i}^{2} \leq t \leq 0}\int_{K_i \times \{t\}} (v_+^{i})^2\zeta_{i}^2\,dx
 +  \iint_{Q_{i}} \abs{\nabla (v_+^{i}\,\zeta_{i})}^2 \,dx\,dt \\
&\leq 
C(\mu_+-h)^{-\beta}h^2 \frac{2^{i+2} k^{\beta}}{\rho^2}|A_{+}^{i}|
 +Ch^2 \frac{2^{2(i+1)}}{\rho^2} |A_{+}^{i}| \\
&\quad + C\mu_{+}^{2/m} \|B\|^{2}_{L_{t}^{2\hat{q}_1} L_{x}^{2\hat{q}_2} (Q_\rho)} \left[ \int_{-k^{-\beta}\rho_{i}^{2}}^{0} \left[A_{+}^{i}(t)\right]^{\frac{q_1}{q_2}} \,dt\right]^{\frac{2(1+\kappa)}{q_1}},
\end{split}\end{equation}
by denoting
\begin{align*}
A_{+}^{i}(\tau) := \{ x\in K_i : (v(x,\tau) +\mu_{+} -h_i )_{+} > 0 \}, \\
A_{+}^{i} := \{ (x,t)\in Q_i : (v +\mu_{+} -h_i )_{+} > 0 \}.
\end{align*}
The multiplication of $\mu_{+}^{\beta}$ to \eqref{EE11} and the choice of $h,k$ provide
\begin{equation}\label{EE12}\begin{split}
&\quad \sup_{ - \nu_0 k^{-\beta} \rho_{i}^{2} \leq t \leq 0}\int_{K_i \times \{t\}} (v_+^{i})^2\zeta_{i}^2\,dx
 +  k^{\beta}\iint_{Q_{i}} \abs{\nabla (v_+^{i}\,\zeta_{i})}^2 \,dx\,dt \\
&\leq C \frac{2^{2i} h^2k^{\beta}}{\rho^2} |A_{+}^{i}|
+ C \|B\|^{2}_{L_{t}^{2\hat{q}_1} L_{x}^{2\hat{q}_2} (Q_\rho)} k^{1+\frac{1}{m}}\left[ \int_{-\nu_0 k^{-\beta}\rho_{i}^{2}}^{0} \left[A_{+}^{i}(t)\right]^{\frac{q_1}{q_2}} \,dt\right]^{\frac{2(1+\kappa)}{q_1}},
\end{split}\end{equation}
which is a parallel estimate to \eqref{EE01} for the lower level set.
By following the similar method above for $v_{-}^{i}$, we reach the conclusion that
\[
|A_{+}^\infty| =|\{(x,t)\in K_{\rho}\times (-\nu_0k^{-\beta}\rho^2,0]\}|  = 0
\]
if $|A_1|$ is small enough only depending on $\nu_0$. Seeing from \eqref{L:SA:1}, this is equivalent to that $\nu_1$ is small enough.
The above statement is equivalent to
\[
\esssup_{Q_{k,\rho}(\nu_0)} v(x,t) < \mu_{+} - \frac{h}{2}.
\]
\end{proof}

\subsection{Proof of H\"{o}lder continuity}\label{SS:Holder}

Now we are ready to prove local H\"{o}lder continuity of $v$ a non-negative bounded weak solution of \eqref{main v} from the two alternatives Proposition~\ref{L:FA} and Proposition~\ref{L:SA}. We prove the following two lemmas concerning linear and nonlinear cases, providing the bounds of solutions' oscillations in small parabolic cylinders.

\begin{lemma}\label{L:main linear}
Suppose $m=1$ and \eqref{B} holds with $\kappa>0$ in \eqref{hatq12}.
For given positive constants $r$ and $\omega$, suppose also that $v$ is a non-negative bounded weak solution of \eqref{main v} in
\[
Q_{ r} = K_{r} \times (-r^2, 0]
\]
with $\essosc_{Q_{ r}} v \leq \omega$. Then there are positive constants $\eta$, $\lambda$, both less than $1$ and $C_0\geq 1$ such that for all $n\geq 1$
\begin{equation}\label{L:main:res}
\essosc_{Q_{ \lambda^n r}} v \leq C_0\eta^n.
\end{equation}

\end{lemma}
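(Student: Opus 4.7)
The plan is to iterate the two alternatives (Propositions \ref{L:FA} and \ref{L:SA}) on a nested sequence of standard parabolic cylinders $Q_{r_n} := K_{r_n}\times(-r_n^2,0]$ with $r_n := \lambda^n r$, for suitable $\lambda,\eta\in(0,1)$ chosen at the end. Since $m=1$ gives $\beta=0$, condition \eqref{small mu-} is automatic, the intrinsic cylinder $Q_{k,\rho}(\theta)$ reduces to $K_\rho\times(-\theta\rho^2,0]$ with no $k$-dependence, and, setting $\rho_n:=r_n/4$, one has $Q_{k,4\rho_n}=Q_{r_n}$ for every $k>0$. I will show inductively that $\omega_n:=\essosc_{Q_{r_n}}v\leq C_0\eta^n$ for some $C_0\geq \omega$.

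For the inductive step, assume $\omega_n\leq C_0\eta^n$; if $\omega_n=0$ the bound propagates trivially, so take $\omega_n>0$ and put $k_n:=\omega_n/4$. Then $\mu_+^{(n)}-\mu_-^{(n)}=\omega_n=4k_n$, so \eqref{case1} holds with $k=k_n$, $\rho=\rho_n$. Propositions \ref{L:FA} and \ref{L:SA} produce two possibilities. In the \emph{reduction} case, conclusions \eqref{L:FA:result2} or \eqref{L:SA:res} together yield
\[
\essosc_{Q_{k_n,\rho_n}(\nu_0)}v\ \leq\ \omega_n-\tfrac{\delta^\ast}{2}k_n\ =\ \bigl(1-\tfrac{\delta^\ast}{8}\bigr)\omega_n,
\]
after taking the worse of the two reduction factors. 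Choosing $\lambda\leq \min(\sqrt{\nu_0}/4,\,1/4)$ guarantees $Q_{r_{n+1}}\subset Q_{k_n,\rho_n}(\nu_0)=K_{\rho_n}\times(-\nu_0\rho_n^2,0]$, hence $\omega_{n+1}\leq (1-\delta^\ast/8)\omega_n$. In the \emph{smallness} case \eqref{L:FA:result1} or \eqref{L:FA:result3} with $\beta=0$, i.e.\ $k_n^{-2}\rho_n^{d\kappa}>(\delta^\ast)^2$, one obtains $\omega_n< C_1\rho_n^{d\kappa/2}\leq C_1 r_n^{d\kappa/2}$ for a purely geometric constant $C_1$, and hence $\omega_{n+1}\leq \omega_n\leq C_1 r^{d\kappa/2}\lambda^{nd\kappa/2}$.

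To collapse both branches into a single geometric rate, set $\eta:=1-\delta^\ast/8$, pick $\lambda\in(0,\min(\sqrt{\nu_0}/4,\,1/4))$ small enough that $\lambda^{d\kappa/2}\leq \eta$, and finally choose
\[
C_0:=\max\bigl(\omega,\ C_1 r^{d\kappa/2}/\eta\bigr).
\]
In the reduction case, $\omega_{n+1}\leq \eta\,\omega_n\leq C_0\eta^{n+1}$. In the smallness case, $\omega_{n+1}\leq C_1 r^{d\kappa/2}\lambda^{nd\kappa/2}\leq C_1 r^{d\kappa/2}\eta^n\leq C_0\eta^{n+1}$. Either way the bound propagates, and since the base case $\omega_0\leq \omega\leq C_0$ holds by hypothesis, \eqref{L:main:res} follows.

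The only real obstacle is balancing the two smallness parameters: the oscillation-reduction factor $\eta=1-\delta^\ast/8$ inherited from the alternatives, and the spatial shrinkage $\lambda$ dictated by the geometric nesting $Q_{r_{n+1}}\subset Q_{k_n,\rho_n}(\nu_0)$. The constraint $\lambda^{d\kappa/2}\leq \eta$ is where the \emph{subcritical} assumption $\kappa>0$ enters: it forces $\lambda$ small but leaves enough room to absorb the smallness branch \eqref{L:FA:result1} into the same geometric decay rate by enlarging $C_0$. No new analytic ingredient is needed beyond the alternatives themselves; the argument is essentially a bookkeeping of constants for a standard DeGiorgi-type iteration specialized to $\beta=0$.
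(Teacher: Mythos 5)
Your proposal is correct and follows essentially the same route as the paper's own proof: iterate Propositions \ref{L:FA} and \ref{L:SA} on nested parabolic cylinders, split into the smallness branch (where the oscillation is already controlled by a power of the radius) and the reduction branch (where one of the alternatives shrinks the oscillation by a fixed factor), and close the induction by balancing $\eta$, $\lambda$, $C_0$. The only cosmetic differences from the paper are that you take $k_n = \omega_n/4$ rather than $\omega_n/2$ (both satisfy \eqref{case1}) and enforce a single fixed shrinkage rate $r_n = \lambda^n r$ with the extra constraint $\lambda^{d\kappa/2}\le\eta$, whereas the paper allows $r_{n+1}\in\{r_n/2,\sqrt{\nu_0}r_n/4\}$ and instead imposes $\eta>2^{-d\kappa/2}$; these are equivalent bookkeeping choices.
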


\begin{proof}
Let $r_0=r$. Then \eqref{L:main:res} is satisfied when $n=0$ by $C_0\geq \omega$ and the assumption. We proceed inductively from the initial value $r_0$ and find \[ \lambda^n r_0\leq r_n\leq \frac{r_0}{2^n}\]
such that
\begin{equation}
\label{lnlem idct}
    \essosc_{Q_{ r_n}} v \leq  C_0\eta^n
\end{equation}
for some $\lambda<1,\eta<1,C_0\geq 1$ to be determined
(see \eqref{con parameter})
and eventually conclude.

Suppose \eqref{lnlem idct} is satisfied for some $n\geq 0$.
We introduce the following notations
\[\mu_{n,+}:=\sup_{Q_n}v,\quad \mu_{n,-}:=\inf_{Q_n}v,\quad k_n:=\mu_{n,+}-\mu_{n,-}
\quad \text{ where }
Q_n:=Q_{r_n}\]
The next $r_{n+1}$ is generated depending on the following cases.
\begin{itemize}
    \item[Case 1:]  Let $\delta_*$ be from Proposition \ref{L:SA}. If \[(\frac{1}{2}k_n)^{-2}(\frac{1}{4}r_n)^{d\kappa}>\delta_*^2,\] the situation is in some sense better since the oscillation is under control.
    We can have
    \begin{equation}\label{C0 bound}
        \begin{aligned}
            osc_{Q_{r_n/2}}v &\leq k_n\leq c(\delta_*) r_n^{d\kappa/2}\\
            &\leq c2^{-nd\kappa/2}r_0^{d\kappa/2} \quad \text{( since $r_n\leq \frac{r_0}{2^n}$ )}
            \\
            &\leq C_0\eta^{n+1},
        \end{aligned}
    \end{equation}
    where the last inequality is true if we take $\eta>(\frac{1}{2})^{d\kappa/2}$ and $C_0$ large enough.
    In order to apply the preceding scheme, let $r_{n+1}=\frac{1}{2}r_n$, and we repeat until it falls into the other cases.

\item[Case 2:]  Suppose $(\frac{1}{2}k_n)^{-2}(\frac{1}{4}r_n)^{d\kappa}\leq \delta_*^2$ and let $\nu_0$ be from Proposition \ref{L:FA}. Furthermore we assume
\begin{equation}
    \label{L:ass}
    \left|\left\{ (x,t) \in Q_{\frac{1}{2}k_n, r_n} : v(x,t) \geq \mu_{-} + \frac{k_n}{2} \right\}\right| \leq \nu_0 \left|Q_{\frac{1}{2}k_n, r_n}\right|
\end{equation}
which is \eqref{L:FA:ass} with $k=\frac{1}{2}k_n, 4\rho=r_n$.
Notice that when $m=1$, $Q_{\omega,r}=Q_r$.  Thus by Proposition \ref{L:FA}
\[\inf_{Q_{r_n/2}}v>\mu_{n,-}+\frac{k_n}{4}.\]
Take $r_{n+1}=r_n/2$. We have
\begin{equation*}
    \begin{aligned}
    k_{n+1}&=osc_{Q_{n+1}}v= \mu_{n+1,+}-\mu_{n+1,-}\\
    &\leq \mu_{n,+}-\mu_{n,-}+\frac{k_n}{4}\\
    &\leq \frac{3}{4}osc_{Q_{n}}v\leq \eta k_n.
    \end{aligned}
\end{equation*}

\item[Case 3:] We are left with the case that $(\frac{1}{2}k_n)^{-2}(\frac{1}{4}r_n)^{d\kappa}\leq \delta_*^2$ and \eqref{L:ass} fails. Since
\[\mu_{n,-}+\frac{k_n}{2}=\mu_{n,+}-\frac{k_n}{2},\]
so the failure of \eqref{L:ass} corresponds to
\[\left|\left\{ Q_{r_n}: v(x,t) \geq \mu_{+} - \frac{k_n}{2} \right\}\right| \leq (1-\nu_0) \left|Q_{r_n}\right|.\]
Now the conditions in Proposition \ref{L:SA} are satisfied and we obtain
\[  \esssup_{Q_{\frac{1}{4}r_n} (\nu_0)} v(x,t) < \mu^{n,+} - \frac{\delta^\ast}{4}k_n\]
where $Q_{\frac{1}{4}r_n} (\nu_0) = K_{\frac{1}{4}r_n} \times (-\nu_0 (\frac{1}{4}r_n)^2, 0]
$.
And therefore
\begin{equation*}
    \begin{aligned}
    osc_{Q_{\frac{1}{4}r_n}(\nu_0)}v&\leq \mu_{n,+}-\frac{\delta^* k_n}{4}-\mu_{n,-}\\
    &\leq (1-\frac{\delta^*}{4})osc_{Q_{n}}v.
    \end{aligned}
\end{equation*}
Now we pick
\[r_{n+1}=\sqrt{\nu_0}\frac{r_n}{4},\quad k_{n+1}= (1-\frac{\delta^*}{4})k_n. \]
Since
\[Q_{n+1}=K_{\frac{\sqrt{\nu_0}}{4}r_n} \times \left(- (\frac{\sqrt{\nu_0}}{4}r_n)^2, 0\right]\subset K_{\frac{1}{4}r_n} \times (-\nu_0 (\frac{1}{4}r_n)^2, 0]=Q_{\frac{1}{4}r_n}(\nu_0),\]
we conclude
\[k_{n+1}=osc_{Q_{n+1}}v\leq \eta k_n\quad \text{ with  $\eta \geq 1-\frac{\delta^*}{4}$.}\]

\end{itemize}
In all, the conditions imposed on $\eta,\lambda$ are true with the choice in the following way:
\begin{equation}\label{con parameter}
\begin{aligned}
&    \eta=\max\{\frac{1+2^{-d\kappa/2}}{2},\,\frac{3}{4},\,1-\frac{\delta_*}{4}\},\\
&   \lambda=\frac{\sqrt{\nu_0}}{4}.
\end{aligned}
\end{equation}
And $C_0$ is chosen to be large enough (only depending on $\eta,\delta_*,d,\kappa,\omega$) such that \eqref{C0 bound} holds.
Finally, we conclude the proof.

\end{proof}

\begin{remark}
\label{rmk linear}

The conclusion of Lemma \ref{L:main linear} can be generalized to the uniform parabolic equations of the following forms with no difficulty:
\[u_t=\nabla\cdot(f(u,x,t)\nabla u)+\nabla\cdot(B u)\]
with $C_1\leq f\leq C_2$ for some $C_2>C_1>0$ for all $u,x,t$. The corresponding energy estimates are needed and the proofs only require a few modifications.
\end{remark}

Now we give the main lemma for the degenerate equation.

\begin{lemma}\label{L:main} (Main lemma for $v$).
Let $m > 1$, $r$, $k$, $\omega$ be given positive constants and $\beta = 1-\frac{1}{m}$. Suppose $m>1$ and \eqref{B} holds with $\kappa>0$ in \eqref{hatq12}.
Let $v$ be a non-negative bounded weak solution of \eqref{main v} in
\[
Q_{k, r} = K_{r} \times (-k^{-\beta}r^2, 0]
\]
with $\essosc_{Q_{k, r}} v \leq \omega$ . Then there are positive constants $\eta$ and $\lambda$, both less than $1$ and $C_0\geq 1$ such that for all $n\geq 0$
\begin{equation}
\essosc_{Q_{ \lambda^n r}} v \leq C_0\eta^n .
\end{equation}
\end{lemma}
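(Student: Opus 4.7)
The proof proceeds by induction on $n$, mirroring the linear argument of Lemma~\ref{L:main linear} but working throughout with the intrinsic cylinders $Q_{k_n,r_n}=K_{r_n}\times(-k_n^{-\beta}r_n^2,0]$, where $k_n$ is chosen proportional to the current oscillation bound so that \eqref{case1} is in force with $k=k_n$. I would set $r_0=r$, $\omega_0=\omega$, $k_0\in[\omega_0/4,\omega_0/2]$, and inductively construct $r_n\geq \lambda^n r$ and $\omega_n\leq C_0\eta^n$ satisfying $\essosc_{Q_{k_n,r_n}}v\leq \omega_n$ together with $k_n\in[\omega_n/4,\omega_n/2]$. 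Once $k_n\leq 1$, the standard cylinder $Q_{\lambda^n r}$ sits inside the intrinsic $Q_{k_n,\lambda^n r}$, so the inductive statement yields the lemma after absorbing finitely many initial indices into $C_0$.

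At step $n$, write $\mu_{n,\pm}=\sup/\inf_{Q_{k_n,r_n}}v$, and distinguish three alternatives. \emph{First}, if $\mu_{n,-}\geq k_n$, then $v\geq k_n$ throughout $Q_{k_n,r_n}$, so setting $w=v^{1/m}$ the equation rewrites as $\partial_t w=\nabla\cdot(mw^{m-1}\nabla w)+\nabla\cdot(Bw)$ with $mw^{m-1}$ pinched between two positive constants depending on $m,\|v\|_\infty$; I invoke Lemma~\ref{L:main linear} via Remark~\ref{rmk linear} to obtain geometric oscillation decay. \emph{Second}, if $\mu_{n,-}<k_n$ but the ``good'' inequality $k_n^{-2-\beta\cdot 2(1+\kappa)/q_1}r_n^{d\kappa}>{\delta^\ast}^2$ holds, it directly forces $\omega_n\sim k_n\lesssim r_n^{\sigma}$ with $\sigma=d\kappa/(2+\beta\cdot 2(1+\kappa)/q_1)>0$, yielding decay against $r_{n+1}=r_n/2$ as in Case~1 of Lemma~\ref{L:main linear}. \emph{Third}, in the remaining case both \eqref{case1} and \eqref{small mu-} are satisfied: if the sublevel density hypothesis \eqref{L:FA:ass} (with $k=k_n$, $4\rho=r_n$) holds, Proposition~\ref{L:FA} raises the infimum by $k_n/2$; otherwise \eqref{L1:ass} holds, and Proposition~\ref{L:SA} lowers the supremum by $(\delta^\ast/2)k_n$. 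Either way the oscillation drops by a fixed fraction of $k_n\sim\omega_n$, giving $\omega_{n+1}\leq\eta\omega_n$ for an $\eta=\eta(m,d,\kappa,\delta^\ast,\nu_0)<1$.

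The main technical obstacle is coordinating the intrinsic scalings across iterations: Propositions~\ref{L:FA} and \ref{L:SA} deliver control on cylinders $Q_{k_n,c r_n}$ with the \emph{old} height $k_n$, yet the next step must be performed on $Q_{k_{n+1},r_{n+1}}$ with the \emph{new} smaller height $k_{n+1}\sim\eta\omega_n<k_n$. Because $\beta>0$, the intrinsic time length $k_{n+1}^{-\beta}r_{n+1}^2$ is \emph{longer} relative to the spatial scale than its predecessor, and the containment $Q_{k_{n+1},r_{n+1}}\subset Q_{k_n,cr_n}$ therefore forces
\[
r_{n+1}\leq c\,(k_{n+1}/k_n)^{\beta/2}r_n\leq c\,\eta^{\beta/2}r_n.
\]
This pushes $\lambda$ small, e.g.\ $\lambda=\tfrac{\sqrt{\nu_0}}{4}\eta^{\beta/2}$, while leaving $\eta<1$ intact. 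With $\eta$, $\lambda$, and $C_0$ finalized as the appropriate maximum/minimum across the three cases, the inductive step closes and the conclusion follows.
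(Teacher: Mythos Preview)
Your approach is essentially the paper's, and your identification of the intrinsic-scaling compatibility $r_{n+1}\leq c(k_{n+1}/k_n)^{\beta/2}r_n$ as the main technical point is exactly right. There is, however, one genuine gap in the case analysis. You assert that ``in the remaining case both \eqref{case1} and \eqref{small mu-} are satisfied,'' but your first two alternatives only exclude $\mu_{n,-}\geq k_n$ and the good inequality; they do \emph{not} rule out $\mu_{n,+}-\mu_{n,-}<2k_n$, so the lower bound in \eqref{case1} can fail. This matters: Propositions~\ref{L:FA} and \ref{L:SA} genuinely need the two-sided bound, in particular to pass from the failure of \eqref{L:FA:ass} to \eqref{L1:ass} via the inclusion $\{v\geq\mu_+-k\}\subset\{v\geq\mu_-+k\}$, which requires $\mu_-+k\leq\mu_+-k$. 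The paper covers this as a separate easy case (its Case~1): when $\essosc_{Q_n}v\leq 2k_n$, set $k_{n+1}=k_n/2$, $r_{n+1}=r_n/2$, and continue trivially.

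A related minor issue: insisting on $k_n\in[\omega_n/4,\omega_n/2]$ with $\omega_n\leq C_0\eta^n$ is slightly too rigid for your Second alternative. The good inequality only bounds $k_n$ by $Cr_n^\sigma$; it does not reduce the oscillation from step $n$ to $n{+}1$, so you cannot force $\omega_{n+1}\leq\eta\omega_n$ there while keeping $Q_{k_{n+1},r_{n+1}}\subset Q_{k_n,r_n}$. The paper's bookkeeping is cleaner: keep $k_{n+1}=k_n$ in that case and phrase the induction as $k_n\leq\max\{\eta k_{n-1},\,C_1 r_n^\epsilon\}$ with $r_n\leq r_0/2^n$, which at the end yields $\essosc_{Q_{\lambda^n r}}v\leq C_0\tilde\eta^n$ for $\tilde\eta=\max\{\eta,2^{-\epsilon}\}$.
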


\begin{proof}

We want $k$ to be not too small compared to $\omega$. So
if $osc_{Q_{k,r}} v>4k$, we take $k_0\geq k$ and, since $Q_{k_0,r}\subset Q_{k,r}$ it is possible that
\[osc_{Q_{k_0,r}} v\leq osc_{Q_{k,r}} v\leq 4k_0.\]

Next let $r_0=r$. We are going to find
\[0<\lambda< \frac{1}{2}<\eta<1,\quad C_1\geq 1, \quad\epsilon\in (0,1)\]
and pairs $(k_n,r_n)$ and show inductively that
\begin{equation}
\begin{aligned}
\label{lnlem idct2}
    &\essosc_{Q_{k_n,  r_n}} v \leq 4k_n,\\
    &\frac{1}{2} k_{n-1}\leq k_n\leq \max\{\eta k_{n-1}, C_1r_n^\epsilon\},\\
    &\lambda r_{n-1}\leq r_n\leq \frac{1}{2} r_{n-1}
\end{aligned}
\end{equation}
and eventually conclude. Here $\eta,\lambda$ will be given in \eqref{cond etalambda} and $C_1,\epsilon$ will be given in \eqref{cond epC1}. Comparing to the proof of Lemma \eqref{L:main linear}, here we do not take the oscillation itself but the quantity $k_n$ which is equivalent to the oscillation.

To do the induction, suppose \eqref{lnlem idct2} is satisfied for the pair $(k_{n},r_n)$.
We introduce the following notations
\[\mu_{n,+}:=\sup_{Q_n}v,\quad \mu_{n,-}:=\inf_{Q_n}v
\quad \text{ where }
Q_n:=Q_{k_n,r_n}\]
The next pair $(k_{n+1},r_{n+1})$ is generated depending on the following cases.
\begin{itemize}

    \item[Case 1:]   We want interpret $k_n$ as the oscillation of $v$ in $Q_n$. If this is not true and suppose
    \begin{equation}
        \label{mainlm c1}
         osc_{Q_n}v\leq 2k_n,
    \end{equation}
    we set $k_{n+1}=\frac{1}{2}k_n$ and $r_{n+1}=\frac{1}{2} {r_n}$. It follows from direct computations that $k_{n+1}^{-\beta}r_{n+1}^2\leq k_n^{-\beta}r_n^2$ and so $Q_{n+1}\subset Q_n$. We have
    \begin{equation*}
    \essosc_{Q_{n+1}} v \leq \essosc_{Q_{n}} v \leq 2k_n \leq 4 k_{n+1}.\end{equation*}

    \item[Case 2:]  Now let us suppose for the rest of the cases that
    \[2k_n\leq osc_{Q_n}v\leq 4k_n.\]
    Recall the notations used in Proposition \ref{L:SA}. If
    \begin{equation}
        \label{prop 4.5 cond}
        k_n^{-\gamma}(\frac{1}{4}r_n)^{d\kappa}>\delta_*^2,\quad\text{ with }\gamma:=2+\beta\frac{2(1+\kappa)}{q_1},
    \end{equation} the oscillation is under control.
    We set $k_{n+1}=k_n$, $r_{n+1}=\frac{1}{2} r_n$,
    By direct computations, we have
    \begin{equation*}
        \begin{aligned}
 osc_{Q_{k_{n+1},r_{n+1}}}v&\leq 4k_{n+1}=4k_n\\
 &\leq c(\delta_*) r_{n+1}^{d\kappa/\gamma} \quad \text{ (by \eqref{prop 4.5 cond}) }.\\
        \end{aligned}
    \end{equation*}
   The inductive step is proved if we take \begin{equation}
       \label{cond epC1}
       \epsilon:=d\kappa/\gamma,\quad C_1:= c(\delta_*)=4^{-d\kappa/\gamma}\delta_*^{-2/\gamma}.
   \end{equation}

\item[Case 3:] In this case, suppose $\mu_{n,-}\geq k_n$. Then inside $Q_n$, $ v\in [k_n,\mu_{n,+}]\subset [k_n,5 k_n]$. We can rescale the equation in the $x, t$ directions so as to obtain a
uniformly parabolic equation, to which we may apply the previous Lemma \ref{L:main linear} and remark \ref{rmk linear} to see that $v$ is uniformly H\"{o}lder continuous in $\frac{1}{2}Q_n$. The proof is then finished.

\item[Case 4:]  We are left with the case when \eqref{case1}, \eqref{small mu-} and \eqref{L:FA:result3} hold. Then the conditions of Proposition \ref{L:FA} and Proposition \ref{L:SA} are satisfied. We can proceed with the two alternatives as done in the (Case 2 \& Case 3) in the proof of previous Lemma \ref{L:main linear}.

Let us only sketch the proof. If further, we assume
\begin{equation}
\label{L2:ass}
     \left|\left\{ (x,t) \in Q_{k_n, r_n} : v(x,t) \geq \mu_{-} + {k_n} \right\}\right| \leq \nu_0 \left|Q_{k_n, r_n}\right|
\end{equation}
with $\nu_0$ from Proposition \ref{L:FA},
it follows from the proposition that
\[\inf_{Q_{k_n, \frac{1}{2}r_n}}v>\mu_{n,-}+\frac{k_n}{2}.\]
We take $r_{n+1}=\frac{1}{4}r_n, k_{n+1}=\frac{7}{8}k_n$. Then
\[Q_{n+1}=Q_{\frac{3}{4}k_n,\frac{r_n}{4}}\subset Q_{k_n,r_n/2}.\]
We have
\begin{equation*}
    \begin{aligned}
osc_{Q_{n+1}}v
&\leq \mu_{n,+}-\mu_{n,-}-\frac{k_n}{2}\\
&\leq 4k_n-\frac{k_n}{2}=\frac{7}{2}k_n\\
&=4k_{n+1}.
    \end{aligned}
\end{equation*}

Otherwise if \eqref{L2:ass} fails:
\begin{align*}
    \label{L3:ass}
&\quad  \quad   \left|\left\{ (x,t) \in Q_{n} : v \geq \mu_{+} - {k_n} \right\}\right|\geq\\
     &\left|\left\{ (x,t) \in Q_{n} : v \geq \mu_{-} + {k_n} \right\}\right|\geq (1-\nu_0) \left|Q_{n}\right|
\end{align*}
 We apply Proposition \ref{L:SA} to have
\[  \esssup_{Q_{k_n,\frac{1}{4}r_n} (\nu_0)} v(x,t) < \mu^{n,+} - \frac{\delta^\ast}{2}k_n.\]
And therefore
\begin{equation*}
    \begin{aligned}
    osc_{Q_{k_n,\frac{1}{4}r_n}(\nu_0)}v&\leq \mu_{n,+}-\frac{\delta^* k_n}{2}-\mu_{n,-}\leq \frac{7}{2}k_n.
    \end{aligned}
\end{equation*}
For some $\lambda$, we pick
\[\eta= 1-\frac{\delta^*}{4},\quad r_{n+1}=\lambda r_n,\quad k_{n+1}= \eta k_n. \]
In order to have $Q_{n+1}\subset Q_{k_n,\frac{1}{4}r_n}(\nu_0)$, we need $\lambda\leq \frac{1}{4}$ and
\[k_{n+1}^{-\beta}r_{n+1}^2=(\eta k_n)^{-\beta}(\lambda r_n)^2\leq \nu_0k_n^{-\beta}(\frac{r_n}{4})^2\]
which is satisfied if we take
$\lambda^2\leq \nu_0\frac{\eta^\beta}{16}$. Finally we can conclude that
\[osc_{Q_{n+1}}v\leq \eta 4k_n=4k_{n+1}.\]

\end{itemize}

In all, we proved that there is a sequence $k_n,r_n$ such that for some $\lambda,\eta<1$,
\[   \lambda r_n\leq r_{n+1}\leq \frac{1}{2}r_n,\quad  k_{n+1}\leq \max\{\eta k_n, C_1 r_n^\epsilon\}\]
and thus
\begin{align*}
    \lambda^n r_0\leq r_n\leq 2^{-n}r_0,\quad
    osc_{Q_{k_n,r_n}}v\leq  4k_n\leq \max\{c \eta^n,C_1 2^{-\epsilon n}r_0^\epsilon\}
\end{align*}
where
\begin{equation}\label{cond etalambda}
    \eta=\max\{\frac{7}{8},\,1-\frac{\delta^*}{4}\},\quad\lambda=\min\{\frac{1}{4},\frac{\sqrt{\nu_0 \eta^\beta}}{4}\}.
\end{equation}
Notice $k_n<1$ for $n$ large and
\[Q_{\lambda^n r_0}\subset K_{r_n}\times [0,- r_n^2]\subset K_{r_n}\times (0,-k_n^{-\beta} r_n^2]= Q_{k_n,r_n} .\]
Therefore, there are some $\tilde{\eta}(\eta,C_1,\epsilon)\in (0,1), C_0(C_1,k_0)$ such that
\[\osc_{Q_{\lambda^n r_0}}v\leq C_0\tilde{\eta}^n.\]

\end{proof}

{
From the main lemmas, Lemma~\ref{L:main linear} and Lemma~\ref{L:main}, we recover the H\"{o}lder continuity of $v$ of \eqref{main v} and then $u$ a non-negative bounded weak solution of \eqref{PME}.
}
\begin{theorem}\label{T:Holder:v} (H\"{o}lder continuity).
Let $u$ be a non-negative bounded weak solution of \eqref{PME} with $m \geq 1$ in $Q_1$. Then $u$ is uniformly H\"{o}lder continuous in $Q_\frac{1}{2}$.
\end{theorem}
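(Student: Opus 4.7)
The plan is to pass from the oscillation-decay estimates on intrinsically-scaled cylinders supplied by Lemma~\ref{L:main linear} (for $m=1$) and Lemma~\ref{L:main} (for $m>1$) to a uniform Hölder modulus of continuity of $v$ centered at an arbitrary point of $Q_{1/2}$, and finally transfer that modulus from $v=u^m$ back to $u$. Set $M:=\|v\|_{L^\infty(Q_1)}$, which is finite by hypothesis. Fix an arbitrary point $(x_0,t_0)\in Q_{1/2}$ and consider the translated weak solution $\tilde v(x,t):=v(x_0+x,t_0+t)$, which still solves \eqref{main v} with drift $\tilde B(x,t):=B(x_0+x,t_0+t)$ whose $L^{2\hat q_1}_tL^{2\hat q_2}_x$ norm is dominated by $\|B\|_{L^{2\hat q_1}_tL^{2\hat q_2}_x(Q_1)}$.

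For $m=1$ the intrinsic cylinder equals the standard one, so I directly apply Lemma~\ref{L:main linear} to $\tilde v$ on $K_{r}\times(-r^2,0]$ with $r=\tfrac14$ and $\omega=2M$; this yields constants $\lambda,\eta\in(0,1)$ and $C_0$ (depending only on $m=1$, $d$, $\hat q_i$, $\|B\|$, $M$) such that $\essosc_{Q_{\lambda^n r}}\tilde v\le C_0\eta^n$ for every $n\ge 0$. For $m>1$ I choose $k=M$ and a radius $r>0$ small enough that $k^{-\beta}r^2\le\tfrac14$ so that $(x_0,t_0)+Q_{M,r}\subset Q_1$; then Lemma~\ref{L:main} applied to $\tilde v$ produces the analogous bound $\essosc_{Q_{\lambda^n r}}\tilde v\le C_0\eta^n$ on standard cylinders (recall the final inclusion $Q_{\lambda^n r_0}\subset Q_{k_n,r_n}$ noted at the end of the lemma). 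All constants can be taken independent of $(x_0,t_0)$ because the hypotheses are controlled uniformly by $M$ and $\|B\|_{L^{2\hat q_1}_tL^{2\hat q_2}_x(Q_1)}$.

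From this geometric decay I extract a polynomial modulus in the usual way: given $\rho\in(0,r]$, pick $n$ with $\lambda^{n+1}r<\rho\le\lambda^n r$ to obtain
\[
\essosc_{(x_0,t_0)+Q_\rho}v\;\le\;C_0\eta^n\;\le\;C\,\rho^{\alpha},\qquad \alpha:=\frac{\log(1/\eta)}{\log(1/\lambda)}>0.
\]
For any two points $(x,t),(y,s)\in Q_{1/2}$ with $s\le t$, set $\rho:=\max\{|x-y|,\sqrt{t-s}\}$; then $(y,s)\in(x,t)+Q_\rho$, so $|v(x,t)-v(y,s)|\le C\rho^\alpha$, which is parabolic Hölder continuity of $v$ on $Q_{1/2}$.

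Finally, I recover Hölder continuity of $u=v^{1/m}$ using the elementary inequality $|a^{1/m}-b^{1/m}|\le|a-b|^{1/m}$ valid for $a,b\ge 0$ and $m\ge 1$: this gives $|u(x,t)-u(y,s)|\le C\rho^{\alpha/m}$, yielding uniform Hölder continuity of $u$ on $Q_{1/2}$ with exponent $\alpha/m$. The only mildly subtle point is verifying that the constants emerging from Lemma~\ref{L:main} are genuinely independent of the base point $(x_0,t_0)\in Q_{1/2}$; this is true because the quantitative inputs in that lemma—$M$, the scale $r$, and the norm of $B$—are all controlled uniformly over such points. I do not anticipate any substantial obstacle, as the DeGiorgi-type alternatives have already absorbed the drift term in the subcritical regime.
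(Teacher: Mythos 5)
Your proposal follows essentially the same route as the paper's proof: fix a point in $Q_{1/2}$, choose an intrinsic cylinder contained in $Q_1$ with $k=\omega=M:=\|v\|_{L^\infty}$, invoke Lemma~\ref{L:main linear} or Lemma~\ref{L:main} to get geometric decay of the oscillation of $v$ on standard cylinders, and pass to $u$. The only difference is that you make explicit the last two steps (extraction of a polynomial modulus from the geometric decay, and the elementary inequality $|a^{1/m}-b^{1/m}|\le |a-b|^{1/m}$ to transfer from $v$ to $u$), which the paper leaves implicit; this is a harmless and correct elaboration, not a different argument.
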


\begin{proof}
Let $v:=u^{1/m}$ which then solves \eqref{main v} in $Q_1$.
Let $M:=\sup_{Q_1} v$ and fix any $(x_0,t_0)\in Q_{1/2}$. Without loss of generality we can assume it is $(0,0)$. We take $r\in(0,\frac{1}{2})$, and $\omega=M$ such that
\[Q_{\omega,r}\subset Q_\frac{1}{2}.\]
Then
\[osc_{Q_{\omega,r}}v\leq \omega.\]
We apply Lemma \ref{L:main linear} for the case when $m=1$ and Lemma \ref{L:main} for the case when $m>1$, to find
\begin{equation}
\essosc_{Q_{\eta^n k, \lambda^n r}} v \leq C(\omega)\eta^n  \quad \text{ for all $n\geq 0$ }
\end{equation}
holds for some positive constants $\eta<1$, $\lambda<1$ and $C(\omega)\geq 1$. This gives us the desired H\"{o}lder continuity of $v$ and then $u$.

\end{proof}


\section{Critical Case}\label{S:critical}

{

First, we recall the critical regime of the drift term $B \in L_{t}^{2\hat{q}_1} L_{x}^{2\hat{q}_2} (Q_1)$ where
\begin{equation}\label{hatq12 cri}
\frac{2}{\hat{q}_1} + \frac{d}{\hat{q}_2} = 2\quad  { \text{ where} \  \hat{q}_1\in (1,+\infty] \ \text{and} \ \hat{q}_2\in [d,+\infty).}
\end{equation}
Moreover assuming divergence-free condition on $B$, that is, $\nabla \cdot B = 0$, we will establish the uniform contitnuity of $u$, a non-negative weak solution of \eqref{PME}.

In the critical case, we do not expect the H\"{o}lder continuity of a weak solution of \eqref{PME} and \eqref{main v} for $B$ in general, see the counterexample Theorem~\ref{thm Ac} (\cite[Theorem~1.3]{KZ}).
However Theorem~\ref{thm Ac} leaves an open question to the possibility of weaker modulus of continuity of solutions.


In this section, we show that the divergence-free condition on $B$ is sharp to obtain the uniform continuity of a non-negative bounded weak solution of \eqref{PME} {in the critical region \eqref{hatq12 cri}}. There are two main results: first, Theorem~\ref{main cri} to prove the uniform continuity of $u$ assuming the divergence-free vector $B$ is in the critical regime and, second, Theorem~\ref{thm:lackofcont} to construct a counterexample failing the uniform continuity without the divergence-free condition on $B$.

}



\medskip

The continuity property we are going to show for the solutions will depend on $\varrho_B$, given in \eqref{def varrho}. Actually we have

\begin{theorem}\label{main cri}
{ Suppose $B$ is divergence-free in the critical regime, that is, $B \in L_t^{2\hat{q}_1}L_x^{2\hat{q}_2}(Q_1)$ with $\hat{q}_1,\hat{q}_2$ satisfying \eqref{hatq12 cri}.}  Let $u$ be a non-negative bounded weak solution of \eqref{PME} with $m \geq 1$ in $Q_1$. Then $u$ is uniformly continuous in $Q_\frac{1}{2}$ depending on $m,d, \hat{q}_1, \hat{q}_2$ and $\varrho_B(r)$.

{Moreover, we have }
\begin{equation}
    \label{cri conclu}
    \sup_{(x,t)\in Q_{\frac{1}{2}}}\essosc_{(x,t)+Q(\lambda^n)}u\leq C\, {\max\{\varrho^{\frac{m}{m-1}}_B(\gamma^n),\, \gamma^n\}}
\end{equation}
for some $\lambda<\gamma<1$, $C\geq 1$ and all $n\geq 1$.
\end{theorem}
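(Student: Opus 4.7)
The plan is to establish an intrinsic-scale oscillation decay for $v:=u^m$, which solves \eqref{main v}, and then recover the continuity of $u=v^{1/m}$ from it. The overall architecture mirrors that of Lemma \ref{L:main}: at each scale fix $\mu_\pm$ and pick $k$ proportional to $\mathrm{osc}(v)$ on the current intrinsic cylinder $Q_{k,r}=K_r\times(-k^{-\beta}r^2,0]$, run a first/second-alternative dichotomy to reduce the oscillation by a fixed factor, and iterate. The crucial difference from Section \ref{S:subcritical} is that $\kappa=0$ kills the scaling gain $r^{d\kappa}$; the required smallness must therefore be supplied by $\varrho_B(r)$, which tends to $0$ as $r\to 0$ by definition.

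First I would recast the first alternative. In place of Proposition \ref{P:EE}, use Proposition \ref{energy cri}, whose drift term is $C(\mu_+-k)^{-2\beta}\|B\|^2_{L_t^{2\hat q_1}L_x^{2\hat q_2}(Q_\rho)}\|v_+\zeta\|^2_{L_t^{q_1}L_x^{q_2}(Q_\rho)}$. By Proposition \ref{P:admissible}, $\|v_+\zeta\|_{L_t^{q_1}L_x^{q_2}}$ is controlled by $\|v_+\zeta\|_{V^2}$. Hence, provided $\varrho_B(r)$ lies below a threshold depending only on the data, the drift contribution can be absorbed into the $V^2$-norm on the left, and the remaining inequality has exactly the structure used in the proof of Proposition \ref{L:FA}. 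The same DeGiorgi iteration then returns $v\geq \mu_-+k/2$ on $Q_{k,2\rho}$ whenever the measure condition \eqref{L:FA:ass} holds. For the second alternative, the logarithmic estimate of Proposition \ref{P:LE} carries a factor $\|B\|^2_{L_t^{2\hat q_1}L_x^{2\hat q_2}}/(\delta k)^2$ that does not benefit from a subcritical gain; I would re-derive it using integration by parts on the drift together with $\mathrm{div}\, B=0$, producing a term again absorbable once $\varrho_B$ is small. With this modification in hand, Lemma \ref{L:EPT}, Lemma \ref{L:EPX} and the upper-level DeGiorgi iteration carry over, giving $v\leq \mu_+-(\delta^*/2)k$ on $Q_{k,\rho}(\nu_0)$ when \eqref{L1:ass} holds.

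Combining both alternatives yields the dichotomy: either $\mathrm{osc}_{Q_{k,r}}v\lesssim \varrho_B(r)^\alpha$ for some fixed $\alpha>0$, or $\mathrm{osc}_{Q_{k,\lambda r}}v\leq \eta\,\mathrm{osc}_{Q_{k,r}}v$ for fixed $\lambda,\eta\in(0,1)$. Iterating this at scales $r_n=\lambda^n$ and tracking the intrinsically adjusted time extent $k_n^{-\beta}r_n^2$ produces a decay of the form $\mathrm{osc}_{Q_{k_n,r_n}}v\leq C\max\{\gamma^n,\varrho_B(\gamma^n)^\alpha\}$ for some $\gamma\in(\lambda,1)$ determined by the ratio between the intrinsic and Euclidean time extents. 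Converting back via $u=v^{1/m}$ and using the relation $k\sim \mathrm{osc}(v)$ together with $\beta=(m-1)/m$ turns the exponent on $\varrho_B$ into $m/(m-1)$, yielding \eqref{cri conclu}.

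The main obstacle is the coupling between the intrinsic scaling and $\varrho_B$: as $k_n\to 0$, the time slab $k_n^{-\beta}r_n^2$ grows, so $\|B\|_{L_t^{2\hat q_1}L_x^{2\hat q_2}}$ on the distorted intrinsic cylinder need not be bounded by $\varrho_B(r_n)$ directly, and one must estimate it by $\varrho_B$ evaluated on a comparable Euclidean cylinder and verify this quantity still tends to zero. A careful inductive bookkeeping of the pair $(k_n,r_n)$, analogous to but more delicate than the one in the proof of Lemma \ref{L:main}, is needed to make the whole chain of estimates quantitatively closed and to produce the two distinct rates $\lambda,\gamma$ appearing in the statement.
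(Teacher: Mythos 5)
Your proposal follows the same overall architecture as the paper's proof: establish oscillation decay for $v=u^m$ on intrinsic cylinders $Q_{k,r}$ via a first/second-alternative dichotomy, drive the decay by $\varrho_B$ (in lieu of the lost subcritical gain $r^{d\kappa}$), and carefully track the intrinsic vs.\ Euclidean cylinder discrepancy via a separate sequence $\gamma_n$. Your use of Proposition~\ref{energy cri} plus Proposition~\ref{P:admissible} to absorb the drift into $\|v_+\zeta\|_{V^2}$, and the resulting DeGiorgi iteration for the first alternative, is precisely Proposition~\ref{L:FAcri}. The bookkeeping concern you raise at the end (the intrinsic slab $k_n^{-\beta}r_n^2$ may grow relative to $r_n^2$) is the real one, and matches the paper's introduction of $\gamma_n^2=\max\{(2^{\beta-2})^n r_0^2/k_0^\beta,\,4^{-n}r_0^2\}$ with $\varrho_n\leq\varrho_B(\gamma_n)$.

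The one place you misdiagnose the mechanism is the second alternative. You claim the logarithmic estimate of Proposition~\ref{P:LE} must be re-derived using $\mathrm{div}\,B=0$ because it "does not benefit from a subcritical gain." In fact Proposition~\ref{P:LE} already survives $\kappa=0$ as written: after the usual bound $[A^+_{k,\rho}(t)]\leq|K_r|$ and $|t-t_0|\leq k^{-\beta}r^2$, the spatial and temporal powers combine to $r^{2d/q_2+4/q_1}=r^d=|K_r|$ exactly (this uses \eqref{hatq12 cri}), so the drift contribution becomes $\|B\|^2\,k^{-\beta(1+2/q_1)}|K_r|$ up to $\delta$-factors, which is small precisely under the alternative $k^{-\beta}\|B\|_{L_t^{2\hat q_1}L_x^{2\hat q_2}(Q_{k,\rho})}\leq\delta^*$. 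The same is true of Lemma~\ref{L:EPX}: the bound $\int\!\!\int|\nabla(v_+\zeta)|^2\lesssim k_j^2\rho^{-2}|Q|$ persists in the critical case under that alternative, without using divergence-free structure. Where the divergence-free condition is genuinely unavoidable is in the \emph{local} energy estimate feeding both DeGiorgi iterations (the lower-level one in Proposition~\ref{L:FAcri} and the upper-level one concluding Proposition~\ref{L:SA cri}): there, with $\kappa=0$, the recursion from Proposition~\ref{P:EE} would read $Z_{i+1}\leq Cb^i(Y_i+Z_i)$, which does not close under Lemma~\ref{L:iter2}, and only the structurally different bound of Proposition~\ref{energy cri}---obtainable precisely because one can subtract the constant $(\mu_+-k)^{1/m}$ inside the divergence---yields an absorbable term and a closable single-sequence iteration. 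So the place you would spend effort re-deriving an estimate is the one that does not need it, while the places that genuinely need the divergence-free replacement (both DeGiorgi iterations) you treat as "carrying over." This is a real gap in the logic, though the repair is exactly the ingredient (Proposition~\ref{energy cri}) that you already put on the table for the first alternative.
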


The main difference of the proof (compared to the subcritical case: Theorem \ref{T:Holder:v}) lies in the first alternative. For example when $\kappa=0$, {the inequality } \eqref{EE07} no longer holds. More importantly, if $\kappa=0$, we can not deduce $Y_\infty=0$ based on the iteration of \eqref{EE08} and \eqref{EE9}.
To overcome the problem, we make use of the divergence-free drift based on a differently derived energy estimate, which is Proposition \ref{energy cri}.

\subsection{The alternatives}\label{SS:cri:alter}



For $v$ a non-negative bounded weak solution of \eqref{main v} and a fixed constant $\rho \in (0, \frac{1}{2})$, let us denote
\[\mu_+=\sup_{Q_{k,\rho}}v,\quad \mu_-=\inf_{Q_{k,\rho}}v\]
where $Q_{k,\rho} = K_{\rho} \times (- k^{-\beta} \rho^2, 0]$.


We give the first alternative {in the following proposition and the second alternative in Proposition~\ref{L:SA cri}.}

\begin{proposition}\label{L:FAcri}
Under the condition of Theorem \eqref{main cri}. Let $v$ be a bounded non-negative weak solution of \eqref{main v} in $Q_{k, \rho}$. Suppose
\begin{equation}\label{case1cri}
  4k\geq \mu_{+} - \mu_{-} \geq 2k.
\end{equation} and
\begin{equation}
    \label{small mu- cri}
    \text{ either }\quad  (\mu_-\leq k) \quad \text{ or }\quad  (\beta=0)\quad \text{ holds }.
\end{equation}
Then there exists a number $\nu_0 ,\delta \in (0,1) $ such that, if
\begin{equation}\label{L:FAcri:ass}
  \left|\left\{ (x,t) \in Q_{k, \rho} : v(x,t) \geq \mu_{-} + k \right\}\right| \leq \nu_0 \left|Q_{k, \rho}\right|,
\end{equation}
then either
\begin{equation}\label{cond cri}
 k^{-\beta}\|B\|_{{{L^{2\hat{q}_1}_t L^{ 2\hat{q}_2}_x}(Q_{k,\rho})}}> \delta.
\end{equation}
or
\begin{equation*}
  \essinf_{Q_{k, \rho/2} } v(x,t) > \mu_{-} + \frac{k}{2}.
\end{equation*}
\end{proposition}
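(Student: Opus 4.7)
The plan is to mimic the DeGiorgi iteration in the proof of Proposition~\ref{L:FA}, but with two key substitutions: the local energy estimate is replaced by the divergence-free energy estimate (the $v_-$ analog of Proposition~\ref{energy cri}), and the mixed-norm drift term is absorbed into the left-hand side using the admissibility relation of Proposition~\ref{P:admissible}. First I would set up the same nested cylinders $Q_i = K_{\rho_i} \times (-k^{-\beta}\rho_i^2, 0]$ with $\rho_i = \rho/2 + \rho/2^{i+1}$, level shifts $k_i = k/2 + k/2^{i+1}$, and cutoffs $\zeta_i$ as in the proof of Proposition~\ref{L:FA}, and introduce $v_-^i := (v - \mu_- - k_i)_-$ together with the level sets $A_-^i := Q_i \cap \{v_-^i > 0\}$.

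The $v_-$-version of Proposition~\ref{energy cri} bounds the $V^2$-norm of $v_-^i \zeta_i$ by a drift-free part of order $2^{2i} k^{2+\beta}\rho^{-2}|A_-^i|$ plus $C(\mu_- + k)^{-2\beta}\|B\|^2_{L^{2\hat{q}_1}_t L^{2\hat{q}_2}_x(Q_i)}\|v_-^i\zeta_i\|^2_{L^{q_1}_t L^{q_2}_x(Q_i)}$. Condition \eqref{small mu- cri} lets me replace $(\mu_- + k)^{-2\beta}$ by $Ck^{-2\beta}$. With $\kappa = 0$ the pair $(q_1, q_2)$ from \eqref{q12} satisfies $1/q_1 + d/(2q_2) = d/4$, which is exactly the admissibility condition of Proposition~\ref{P:admissible} with $p = 2$, so $\|v_-^i\zeta_i\|_{L^{q_1}_t L^{q_2}_x} \leq C\|v_-^i\zeta_i\|_{V^2}$. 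Combining these, the drift contribution is bounded by a constant multiple of $\bigl(k^{-\beta}\|B\|_{L^{2\hat{q}_1}_t L^{2\hat{q}_2}_x(Q_{k,\rho})}\bigr)^2 \|v_-^i\zeta_i\|^2_{V^2}$; if the alternative \eqref{cond cri} fails, this is at most $C\delta^2$ times the $V^2$-norm and can be absorbed into the left-hand side by choosing $\delta$ small enough depending only on $m, d, \hat{q}_1, \hat{q}_2$.

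After absorption, the recursive inequality reduces to a purely drift-free one. Passing to the intrinsically rescaled time variable $\bar{t} = k^\beta t$ and invoking the Sobolev embedding (Theorem~\ref{T:Sobolev}) in the usual manner yields a single-sequence recursion $Y_{i+1} \leq C b^i Y_i^{1 + 2/(d+2)}$ for $Y_i := |\bar{A}_-^i|/|\bar{Q}_i|$; this is simpler than the coupled $(Y_i, Z_i)$ system of Proposition~\ref{L:FA} precisely because the drift no longer appears separately. A standard geometric convergence argument (the $Z$-free specialisation of Lemma~\ref{L:iter2}) then forces $Y_\infty = 0$ provided $Y_0$ is below a threshold $\nu_0 = \nu_0(m, d, \hat{q}_1, \hat{q}_2)$, which is exactly \eqref{L:FAcri:ass}. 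This gives $\essinf_{Q_{k,\rho/2}} v \geq \mu_- + k/2$ as required.

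The main obstacle is the absorption step: one must verify that the scaling-invariant combination $k^{-\beta}\|B\|_{L^{2\hat{q}_1}_t L^{2\hat{q}_2}_x}$ is indeed what controls the drift contribution uniformly in $i$ and $\rho$. This requires tracking how the $L^{2\hat{q}_1}_t L^{2\hat{q}_2}_x$-norm transforms under the intrinsic time rescaling $\bar{t} = k^\beta t$ and reconciling the resulting $k^{-\beta/\hat{q}_1}$ factor with the $k^{-2\beta}$ prefactor from the energy estimate, using the critical relation $2/\hat{q}_1 + d/\hat{q}_2 = 2$. Once this bookkeeping is correct, the threshold $\delta$ is geometric and independent of $k$ and $\rho$, and the remainder of the argument follows the subcritical template without further difficulty.
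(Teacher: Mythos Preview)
Your proposal is correct and follows essentially the same route as the paper's proof. Both arguments hinge on the same two new ingredients relative to the subcritical case: the divergence-free energy estimate of Proposition~\ref{energy cri} (in its $v_-$ form) to make the drift contribution appear as $Ck^{-2\beta}\|B\|^{2}\|v_-^i\zeta_i\|_{L^{q_1}_tL^{q_2}_x}^{2}$, and Proposition~\ref{P:admissible} with $\kappa=0$ to bound that mixed norm back by the $V^2$-norm so the drift can be absorbed when $k^{-\beta}\|B\|\leq\delta$. The paper organises the absorption slightly differently---it carries an auxiliary quantity $Z_i=|K_i|^{-1}\|v_-^i\zeta_i\|_{L^{q_1}_tL^{q_2}_x}^{2}$ through the recursion, bounds $Z_i$ by $V^2$ via admissibility, and absorbs at the level of the $Z_i$ inequality---whereas you absorb directly in the $V^2$ bound before passing to the recursion; but this is the same computation in a different order and both collapse to the single-sequence recursion $Y_{i+1}\leq Cb^{i}Y_i^{1+2/(d+2)}$. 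Your closing caveat about the scaling bookkeeping is a fair technical worry (the paper itself is loose about whether the $L^{q_1}_tL^{q_2}_x$ norm lives in $Q_i$ or the rescaled $\bar{Q}_i$), but once the admissible pair is verified the factors line up and the threshold $\delta$ depends only on $m,d,\hat{q}_1,\hat{q}_2$ as you state.
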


\begin{proof}

Set $v_k^-:=(v - \mu_{-} - {k})_{+}$. We construct sequences $\{\rho_i\}$, $\{k_i\}$, $\{K_{i}\}$, and $\{Q_i\}$ such that
\begin{gather}
\rho_{i} = {\rho } + \frac{\rho }{2^{i}}, \quad \rho_0 = \rho , \ \rho_{\infty}= \frac{\rho}{2},\\
k_{i} =\frac{k}{2} + \frac{k}{2^{i+1}}, \quad k_{0}=k, \ k_{\infty} =\frac{k}{2},\\
K_{i}= K_{\rho_i}, \quad Q_{i} =K_{i}\times (- k^{-\beta}
\rho_{i}^{2}, 0].
\end{gather}
Moreover, we take a sequence of piecewise linear cutoff functions
$\{ \zeta_{i}\}_{i=0}^{\infty}$ such that
\[
\zeta_{i} = \begin{cases}
            1 & \text{ in } Q_{i+1} \\
            0 & \text{ on the parabolic boundary of } Q_{i},
 \end{cases}
\]
satisfying
\begin{equation}
    \label{zeta i}
|\nabla \zeta_{i}| \leq \frac{2^{i+2}}{\rho}, \quad
\partial_{t} \zeta_{i} \leq \frac{2^{i+2} k^{\beta}}{ \rho^{2}}.
\end{equation}

By Proposition \ref{energy cri}, since $k_i\sim \mu_+-k_i$ when $m\ne 1$ by the assumption,
\begin{equation*}\begin{split}
&\quad k_i^{-\beta}\sup_{-k_i^{-\beta}\rho_i^2 \leq t \leq 0}\int_{K_{\rho_i} \times \{t\}} (v_{k_i}^-)^2\zeta_i^2\,dxdt
 +  \iint |\nabla (v_{k_i}^-\,\zeta_i)|^2 \,dxdt \\
&\leq Ck_i^{1+1/m}\iint_{v_{k_i}^->0}|\zeta_i\,\partial_t\zeta_i|\,dxdt+Ck_i^2\iint_{v_{k_i}^->0} |\nabla\zeta_i|^2\,dxdt\\
&\quad\quad +Ck_i^{-2\beta} \|B\|^2_{{L^{2\hat{q}_1}_t L^{ 2\hat{q}_2}_x}(Q_i)}
 \left\|v_{k_i}^-\zeta_i\right\|^2_{L^{q_1}_tL^{q_2}_x(Q_i)}
\end{split}\end{equation*}
By \eqref{zeta i} and definitions of $\rho_i,k_i,\beta=\frac{m-1}{m}$,
\begin{equation}\label{14energy}\begin{split}
&\quad \sup_{-k_i^{-\beta}\rho_i^2 \leq t \leq 0}\int_{K_{\rho_i} \times \{t\}} (v_{k_i}^-)^2\zeta_i^2\,dxdt
 +  k_i^\beta\iint |\nabla (v_{k_i}^-\,\zeta_i)|^2 \,dxdt \\
&\leq Ck_i^2\frac{2^{i+2 }k^\beta}{\rho^2}|\{v_{k_i}^->0\}\cap Q_i|+Ck_i^{2+\beta}\frac{2^{2i+4}}{\rho^2}|\{v_{k_i}^->0\}\cap Q_i|\\
&\quad\quad +Ck_i^{-\beta} \|B\|^2_{{L^{2\hat{q}_1}_t L^{ 2\hat{q}_2}_x}(Q_i)}
 \left\|v_{k_i}^-\zeta_i\right\|^2_{L^{q_1}_tL^{q_2}_x(Q_i)}.
\end{split}\end{equation}

Now we take the change of variable that $ \bar{t} = k_i^{\beta} t \in
[- \rho_i^2, 0] .$ Also denote $\bar{v} = v(\cdot, \bar{t})$,
$\bar{\zeta}_i = \zeta_i(\cdot, \bar{t})$, $\bar{v}_{i}=v^-_{k_i}(\cdot,\bar{t}) $ and $\overline{Q}_{i} = K_{i}
\times (- \rho_{i}^{2}, 0]$. Also for simplicity, denote
\[
\bar{A}_{i}:= \bar{A}^{-}_{k_i, \rho_i} = |\bar{Q}_{i} \cap \{ \bar{v}_{i} > 0\}|.
\]

Then \eqref{14energy} gives
\begin{equation}
\label{bd V1}
\begin{split}
&\quad \|\bar{v}_i\,\bar{\zeta}_i\|^2_{V^2(\bar{Q}_i)}:= \sup_{ -  \rho_{i}^{2}\leq \bar{t} \leq 0} \, \int_{K_{\rho_i}\times\{t\}}  (\bar{v}_i\,\bar{\zeta}_i)^2\,dx
 +  \iint |\nabla \bar{v}_i \bar{\zeta}_i)|^2 \,dxd\bar{t} \\
&\leq Ck_i^2\frac{2^{i }}{\rho^2}\bar{A}_{i}+Ck_i^{2}\frac{4^{i}}{\rho^2}\bar{A}_{i} +Ck_i^{-2\beta} \|B\|^2_{{L^{2\hat{q}_1}_t L^{ 2\hat{q}_2}_x}(Q_i)}
 \left\|\bar{v}_i\,\bar{\zeta}_i\right\|^2_{L^{q_1}_tL^{q_2}_x(Q_i)}.
\end{split}\end{equation}

Now we apply Sobolev
embedding theorem (Theorem~\ref{T:Sobolev}) for $\bar{v}_i\bar{\zeta}_i$ from which we calculate
\begin{equation*}
 \|\bar{v}_i\,\bar{\zeta}_i\|^2_{L^2(\bar{i})}\leq |\bar{A}_{i}|^{\frac{2}{d+2}}\|\bar{v}_i\,\bar{\zeta}_i\|^2_{V^2(\bar{Q}_i)}.
\end{equation*}

In the set $\{ \bar{v}_{i+1}> 0 \}$, we observe
that
$$ \bar{v}_i=(\bar{v} - \mu_{-} - k_{i})_{-} \geq k_{i} - k_{i+1} = \frac{k}{2^{i+2}}. $$
Then the combination of the above leads to the following inequality:
\begin{equation*}\begin{split}
&\frac{k^{2}}{4^{(i+2)}} \left|\bar{A}_{i+1}\right| \leq C\left(\frac{2^{i} k^{2}}{ \rho^2} + \frac{ 4^{i}k^{2}}{\rho^2}\right) \left|\bar{A}_{i}\right|^{1+\frac{2}{d+2}} \\
  &\quad + C k_i^{-2\beta} \|B\|^2_{{L^{2\hat{q}_1}_t L^{ 2\hat{q}_2}_x}(Q_i)}
 \left\|\bar{v}_i\,\bar{\zeta}_i\right\|^2_{L^{q_1}_tL^{q_2}_x(Q_i)}|\bar{A}_i|^{\frac{2}{d+2}}
  .
\end{split}\end{equation*}

We note that
$$ \rho^2 \sim |\overline{Q}_{i}|^{\frac{2}{d+2}}, \quad |K_{i}| \sim |\overline{Q}_{i}|^{\frac{d}{d+2}}, \quad |\bar{Q}_{i}| = c(d) |\bar{Q}_{i+1}|. $$
Now multiply $2^{2(i+2)}/ k^{2}$ and divide by $|\bar{Q}_{i+1}|$ that yields
\begin{equation}\label{ineq frac}
\frac{|\bar{A}_{i+1}|}{|\bar{Q}_{i+1}|}
\leq C_0  16^{i}\left(\frac{\left|\bar{A}_{i}\right|}{|\bar{Q}_i|}\right)^{1+\frac{2}{d+2}} + C_14^ik^{-2\beta-2} \|B\|^2_{{L^{2\hat{q}_1}_t L^{ 2\hat{q}_2}_x}(Q_i)}
 Z_i\left(\frac{\left|\bar{A}_{i}\right|}{|\bar{Q}_i|}\right)^{\frac{2}{d+2}}
\end{equation}
where $C_0$ and $C_1$ are positive constants depending on $d, m$ and
\[
{Z}_i = \frac{1}{|K_i|}\left\|\bar{v}_i\,\bar{\zeta}_i\right\|^2_{L^{q_1}_tL^{q_2}_x(Q_i)}.
\]

Now by Proposition~\ref{P:admissible} (Soblev embedding) {and \eqref{bd V1}}, we are able to obtain
\begin{align*}
    Z_{i} &\leq \frac{1}{|K_i|}  \|\bar{v}_i\,\bar{\zeta}_i\|^2_{V^2(\bar{Q}_i)}\\
&\leq \frac{C}{|K_i|}k_i^{2}\frac{4^{i}}{\rho^2}\bar{A}_{i} +\frac{C}{|K_i|}k_i^{-2\beta} \|B\|^2_{{L^{2\hat{q}_1}_t L^{ 2\hat{q}_2}_x}(Q_i)}
 \left\|\bar{v}_i\,\bar{\zeta}_i\right\|^2_{L^{q_1}_tL^{q_2}_x(Q_i)}\\
 &\leq C4^i k^2 \left(\frac{\left|\bar{A}_{i}\right|}{|\bar{Q}_i|}\right)+Ck^{-2\beta}\|B\|^2_{{L^{2\hat{q}_1}_t L^{ 2\hat{q}_2}_x}(Q_i)}Z_i.
\end{align*}

Recall \eqref{cond cri}, if we take $\delta$ to be small enough only depending on universal constants, we can assume
\begin{equation}
Ck^{-2\beta}\|B\|^2_{{L^{2\hat{q}_1}_t L^{ 2\hat{q}_2}_x}(Q_i)} \leq 1/2
\end{equation}
which leads to
\[Z_i\leq C4^i k^2 \left(\frac{\left|\bar{A}_{i}\right|}{|\bar{Q}_i|}\right).\]
By \eqref{ineq frac}
\begin{align*}
    \frac{|\bar{A}_{i+1}|}{|\bar{Q}_{i+1}|}
&\leq C_0  16^{i}\left(\frac{\left|\bar{A}_{i}\right|}{|\bar{Q}_i|}\right)^{1+\frac{2}{d+2}} + CC_1 16^ik^{-2\beta} \|B\|^2_{{L^{2\hat{q}_1}_t L^{ 2\hat{q}_2}_x}(Q_i)}
 \left(\frac{\left|\bar{A}_{i}\right|}{|\bar{Q}_i|}\right)^{1+\frac{2}{d+2}}\\
 &\leq C_2  16^{i}\left(\frac{\left|\bar{A}_{i}\right|}{|\bar{Q}_i|}\right)^{1+\frac{2}{d+2}}
\end{align*}

Therefore by Lemma~\ref{L:iter2} with $Y_n=\frac{|\bar{A}_{n}|}{|\bar{Q}_{n}|}$ and $Z_n\equiv 0$, if $\frac{|\bar{A}_{0}|}{|\bar{Q}_{0}|}$ is small enough only depending on $d,m$ and then $\frac{|\bar{A}_{i+1}|}{|\bar{Q}_{i+1}|}$ tends to zero as $i\to \infty$.
And the smallness of  $\frac{|\bar{A}_{0}|}{|\bar{Q}_{0}|}$ is guaranteed if $\nu_0$ is small.
\end{proof}


{
Now we deliver the second alternative when the assumption for the first alternative \eqref{L:FAcri:ass} fails. The proof is a combination of the proof for Propositon~\ref{L:SA}, the second alternative for the subcritical case and the previous Proposition.
}

\begin{proposition}
\label{L:SA cri}
Suppose $\nabla\cdot B=0$ and $B\in L_t^{2\hat{q}_1}L_x^{2\hat{q}_2}(Q_1)$ with
$\frac{2}{\hat{q}_1} + \frac{d}{\hat{q}_2} = 2.$
Let $v$ be a bounded non-negative weak solution of \eqref{main v} in $Q_{k, \rho}$. Suppose
\eqref{case1cri}, \eqref{small mu- cri} and
\[ \left|\left\{ (x,t) \in Q_{k, \rho} : v(x,t) \geq \mu_{-} + k \right\}\right| \geq \nu_0 \left|Q_{k, \rho}\right|.
\]
Then there exists $\delta^\ast(\nu_0)<\frac{1}{4}$ such that either
\begin{equation*}
{ k^{-\beta}}\|B\|_{{{L^{2\hat{q}_1}_t L^{ 2\hat{q}_2}_x}(Q_{k,\rho})}}\leq \delta^\ast.
\end{equation*}
or
\begin{equation*}
  \esssup_{Q_{k, \rho} (\nu_0)} v(x,t) < \mu^{+} - \frac{\delta^\ast}{2}k.
\end{equation*}
\end{proposition}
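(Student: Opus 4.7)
The plan is to follow the same three-stage blueprint used for Proposition \ref{L:SA}: locate a time slice on which the bad set is small, propagate the bound both in time (via the logarithmic estimate) and in space, and close with a DeGiorgi iteration on an upper-level set. In the critical regime the scaling bonus $\rho^{d\kappa}$ is absent, so I will replace every application of Proposition \ref{P:EE} by the divergence-free energy estimate Proposition \ref{energy cri}. Its right-hand side carries $C(\mu_+-k)^{-2\beta}\|B\|^2\|v_+\zeta\|^2_{L^{q_1}_t L^{q_2}_x}$, which by Proposition \ref{P:admissible} is controlled by $Ck^{-2\beta}\|B\|^2\|v_+\zeta\|^2_{V^2}$ (using $\mu_+-k\sim k$ under \eqref{case1cri}) and can be absorbed into the left-hand side exactly when $k^{-\beta}\|B\|_{L^{2\hat{q}_1}_t L^{2\hat{q}_2}_x(Q_{k,\rho})}$ is below a universal threshold depending only on $\nu_0$. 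If this smallness fails the first clause of the dichotomy in the statement already holds, so I may assume throughout the proof that $k^{-\beta}\|B\|$ is small and aim for the second clause.

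Under the failure of the first-alternative measure condition, \eqref{case1cri} gives $|\{Q_{k,\rho}:v \geq \mu_+ - k\}| \leq (1-\nu_0)|Q_{k,\rho}|$, and the argument of Lemma \ref{L1} produces a time $-t_0 \in (-k^{-\beta}\rho^2, -\tfrac{\nu_0}{2}k^{-\beta}\rho^2)$ with $|\{v(\cdot,-t_0) \geq \mu_+ - k\}| \leq \tfrac{1-\nu_0}{1-\nu_0/2}|K_\rho|$. I propagate this slice-wise control forward in time using Proposition \ref{P:LE} as in Lemma \ref{L:EPT}, but now estimate $\|B\|^2$ directly by the smallness condition rather than by $\rho^{d\kappa}$; bookkeeping of the $k$-powers, together with $\mu_+\sim k$ from \eqref{small mu- cri}, yields $\delta_1 = \delta_1(\nu_0)$ with $|\{x\in K_\rho:\, v(x,t) > \mu_+ - \delta_1 k\}| \leq (1-\nu_0^2/4)|K_\rho|$ for every $-t\in(-t_0,0]$. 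The spatial expansion of Lemma \ref{L:EPX} is adapted analogously: one runs over the dyadic levels $k_j = 2^{-j}k$, applies Proposition \ref{energy cri} instead of Proposition \ref{P:EE}, absorbs the drift term under the smallness condition, and uses the isoperimetric inequality (Lemma \ref{Iso}) to conclude that for any $\nu_1\in(0,1)$ there is $\delta^\ast \leq \delta_1$ (depending on $\nu_0,\nu_1$) with $|\{v > \mu_+ - \delta^\ast k\} \cap Q_{k,\rho}(\nu_0)| \leq \nu_1\,|Q_{k,\rho}(\nu_0)|$.

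With $h := \delta^\ast k$, $h_i := h/2 + h/2^{i+1}$, and nested cylinders $Q_i$ and cutoffs $\zeta_i$ as in the proof of Proposition \ref{L:SA}, I close by a DeGiorgi iteration on $v_+^i := (v-\mu_+ + h_i)_+$. Applying Proposition \ref{energy cri} at each level, rescaling $\bar{t} = k^\beta t$ so that the $V^2$-norm appears cleanly on the left, absorbing the $\|B\|^2$ term as before, and invoking Sobolev embedding on $v_+^i\zeta_i$ produce a recursion $Y_{i+1} \leq C b^i Y_i^{1+2/(d+2)}$ with $Y_i := |A_+^i|/|Q_i|$, which converges to zero by Lemma \ref{L:iter2} provided $Y_0$ is sufficiently small; this is ensured by choosing $\nu_1$ small enough in the spatial-expansion step. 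The result is $v \leq \mu_+ - h/2$ on $Q_{k,\rho}(\nu_0)$, the desired second clause.

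The main obstacle is keeping the absorption consistent across the three stages, because each produces a different combination of $k$-powers multiplying $\|B\|^2$: the logarithmic estimate carries $k^{-2-2\beta/q_1}$-type factors, the energy estimate carries $k^{-2\beta}$, and the Sobolev embedding contributes an extra $k^2$ after rescaling. The assumption $\mu_+\sim k$ (guaranteed by \eqref{case1cri} together with \eqref{small mu- cri}) and the choice of a single universal $\delta^\ast$ depending only on $\nu_0,\nu_1,m,d,\hat{q}_1,\hat{q}_2$ must be made carefully so that all these absorption conditions hold simultaneously; once this bookkeeping is done, the rest is a direct transcription of the proof of Proposition \ref{L:SA}.
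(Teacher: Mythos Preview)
Your proposal is correct and takes essentially the same approach as the paper, whose own proof consists of the single sentence ``The proof is a combination of the proof for Proposition~\ref{L:SA}, the second alternative for the subcritical case and the previous Proposition''; your plan realizes exactly this by rerunning the three-stage argument of Proposition~\ref{L:SA} (time slice, logarithmic time expansion, spatial expansion, DeGiorgi iteration) with Proposition~\ref{energy cri} substituted for Proposition~\ref{P:EE} and the drift contribution absorbed via the smallness of $k^{-\beta}\|B\|$ as in Proposition~\ref{L:FAcri}. Note that the inequality in the first clause of the stated dichotomy should read ``$>$'' rather than ``$\leq$'' (compare \eqref{cond cri} and Case~2 of Section~\ref{SS:cri:proof}), so your reading---assume $k^{-\beta}\|B\|$ is small and derive the $\esssup$ bound---is indeed the intended one.
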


\subsection{Sketch of the proof for Theorem \ref{main cri}}\label{SS:cri:proof}

The proof is similar to the one of Lemmas \ref{L:main linear}, \ref{L:main} and Theorem \ref{T:Holder:v}.

We only consider the case $m>1$ here. As before, let $v=u^{1/m}$ and consider $(0,0)$ point.
As before we are going to find pairs $(k_n,r_n)$ and proceed the proof inductively. Let us start with some $k_0,r_0<\frac{1}{4}$ such that
\[osc_{Q_{k_0,r_0}}v\leq 4k_0.\]

We need to consider the modulus of continuity generated by $B$. Similarly as in \eqref{def varrho}, set
\[\varrho_n:=\left\|B\right\|_{L_t^{2\hat{q}_1}L_x^{2\hat{q}_2}({Q_{k_n,r_n}})}.\]
We are going to show inductively that for some $\eta\in (\frac{1}{2},1)$ and $\lambda<\frac{1}{2}$, there are
\begin{equation}
\label{lnlem idct2cri}
    \begin{aligned}
        & \essosc_{Q_{k_n,  r_n}} v \leq 4k_n,\\
        & \lambda r_{n-1}<r_n\leq \frac{1}{2} r_{n-1},\\
        &\frac{1}{2}k_{n-1}\leq k_n\leq \max\{\eta k_{n-1},  (\frac{\varrho_{n-1}}{\delta})^{1/\beta}\}
    \end{aligned}
\end{equation}
where $\delta$ comes from Proposition \ref{L:FAcri}.

To do the induction, suppose \eqref{lnlem idct2cri} is satisfied for the pair $(k_{n},r_n)$.
We introduce the following notations
\[\mu_{n,+}:=\sup_{Q_n}v,\quad \mu_{n,-}:=\inf_{Q_n}v
\quad \text{ where }
Q_n:=Q_{k_n,r_n}\]
The next pair $(k_{n+1},r_{n+1})$ is generated depending on the following cases.
\begin{itemize}

    \item[Case 1:]   If
    $   2k_n\geq osc_{Q_n}v$,
   let $k_{n+1}=\frac{k_n}{2}$ and $r_{n+1}=\frac{r_n}{2}$. We have
    \begin{equation*}
    \essosc_{Q_{n+1}} v \leq \essosc_{Q_{n}} v \leq 2k_n \leq 4 k_{n+1}.\end{equation*}

    \item[Case 2:]  Now let us suppose for the rest of the cases that
    \[2k_n\leq osc_{Q_n}v\leq 4k_n.\]
    If
   $
      \delta  k_n^{\beta}< \varrho_n=\|B\|_{{{L^{2\hat{q}_1}_t L^{ 2\hat{q}_2}_x}(Q_{n})}},$
     the oscillation is under control.
    We set
    \[k_{n+1}=k_n, r_{n+1}=\frac{1}{2}r_n.\]
Then it is direct that
    \begin{equation*}
        \begin{aligned}
 osc_{Q_{n+1}}v&\leq 4k_{n+1}\quad\text{and}\quad k_{n+1}\leq  (\frac{\varrho_n}{\delta})^{1/\beta}.
        \end{aligned}
    \end{equation*}

\item[Case 3:]
    Now if $\mu_{n,-}\geq k_n$, then inside $Q_n$, $ v\in [k_n,\mu_{n,+}]\subset [k_n,5 k_n]$. We may then rescale the equation in the $x, t$ directions so as to obtain a
uniformly parabolic equation, to which we may apply the previous result and remark \ref{rmk linear} to see that $v$ is uniformly H\"{o}lder continuous in $\frac{1}{2}Q_n$. The proof is then finished.

\item[Case 4:] From the previous Cases 1-3, we are left with the situation when \eqref{case1cri}, \eqref{small mu- cri} and \eqref{cond cri} hold in $Q_n$. Then the conditions of Proposition \ref{L:FAcri} and Proposition \ref{L:SA cri} are satisfied. We can proceed with the two alternatives as done in the (Case 2 \& Case 3) in the proof of Lemma \ref{L:main linear}. The alternatives yield: there exist $\lambda<\frac{1}{4},\eta\in(\frac{1}{2},1)$ such that
for $r_{n+1}:=\lambda r_n$ and $k_{n+1}:=\eta k_n$, we have
\[osc_{Q_{n+1}}v\leq 4k_{n+1}.\]

\end{itemize}

Finally we claim that we proved \eqref{lnlem idct2cri}. By definition, $Q_n=Q_{k_n,r_n}$ is a subset of either $Q_{r_n}$ or $Q_{k_n^{-{\beta}/{2}}r_n}$.
It follows from the inductive assumptions on $k_n,r_n$, we get
\[r_n\leq 2^{-n}r_0\to 0\quad \text{ and }\quad \frac{r_n^2}{k_n^\beta}\leq ({2^{\beta-2}})^n\frac{r_0^2}{k^\beta_0}\to 0 \quad\text{ as }n\to\infty.\]
Write $\gamma_n^2:=\max\{({2^{\beta-2}})^n\frac{r_0^2}{k^\beta_0}, 4^{-n}r_0^2\}$ and therefore for some $c>0$
\begin{equation}
    \label{Qn est}
Q_{n}\subset Q_{\gamma_n}=K_{\gamma_n}\times (-\gamma_n^2,0]\quad\text{ and }\quad \gamma_n\leq c2^{(\frac{\beta}{2}-1)n}.
\end{equation} Using the notation \eqref{def varrho}, we have
\[ \varrho_n\leq \varrho_B(\gamma_n)\to 0 \quad\text{ as }n\to\infty.\]

Hence \eqref{lnlem idct2cri}, \eqref{Qn est} lead to
\[\essosc_{Q(\lambda^n)}v\leq C \,{\max\{\varrho^{1/\beta}_B(\gamma^n),\gamma^n\}}\]
for some $\gamma\in(0,1)$ and $C(\delta)\geq 1$. From this it is not hard to verify \eqref{cri conclu}.

\subsection{{Loss of the uniform continuity}}

In the previous sections (Section~\ref{SS:cri:alter} and Section~\ref{SS:cri:proof}), we complete the proof of Theorem~\ref{main cri} providing a non-negative bounded weak solution of \eqref{PME} is uniformly continuous as long as the divergence-free vector field $B$ is in the critical case. Indeed, we proved that solutions yield weaker norm of continuity depending on the modulus continuity of the drift $\varrho_B(r)$, defined in \eqref{def varrho}.

To claim the sharpness of the divergence-free condition on $B$ giving the uniform continuity of a weak solution of \eqref{PME}, we construct a counterexample distinguishing the regularity of a weak solution of \eqref{PME} corresponding to the divergence-free and general drift terms, respectively.

First, let us recall Theorem~5.2 \cite{KZ} providing the failure of the boundedness in the critical case.
However, the theorem does not take the decay of the norm of $B$ in small scales into consideration.
In order to have a more complete picture, as one extension of Theorem~5.2 \cite{KZ}, we will show that the loss of continuity can be indeed independent of the modulus of continuity $\varrho_B(r)$.




Consider spatial drift $B=B(x)$ in the critical region \eqref{hatq12 cri}. For $2\hat{q}_1=\infty$ and $2\hat{q}_2=d$ in \eqref{def varrho}, we write
\[\varrho_B(r)=\sup_{x_0}\left\|B\right\|_{L^{d}((x_0+K_{r})\cap K_1)}.\]

\begin{theorem}\label{thm:lackofcont}
 There exist sequences of vector fields $\{B_n\}_n$, which are uniformly bounded in $L^d(\R^d)$ and
 \[\varrho_{B_n}(r)\leq \omega(r) \quad \text{for some modulus of continuity }\omega,\]
 along with sequences of uniformly bounded functions $\{u_{n}\}$ in $K_1$ which are stationary solutions to \eqref{PME} with $B=B_n$ such that, they do not share any common mode of continuity.
\end{theorem}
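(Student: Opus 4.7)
The plan is to produce the counterexample via explicit radial stationary profiles. The key starting observation is that for any positive function $u$, setting $B:=-\nabla u^m/u$ forces $\nabla\cdot(Bu)=-\Delta u^m$ pointwise, and hence $(u,B)$ automatically solves the stationary equation $\Delta u^m+\nabla\cdot(Bu)=0$ in the weak sense on $\{u>0\}$. The problem therefore reduces to exhibiting a family of radial profiles $\{u_n\}$ that is uniformly bounded and fails to be equicontinuous, while the associated drifts $B_n$ have uniformly small local $L^d$ norm.

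My candidate, valid for every $m\geq 1$ in dimension $d\geq 2$, is the logarithmic profile $u_\alpha(x):=\bigl(-\log|x|\bigr)^{-\alpha}$ on $K_{1/4}$ (smoothly extended to a uniformly bounded positive function on $K_1\setminus K_{1/4}$), with $\alpha=\alpha_n\to 0^+$. A direct computation gives
\[
|B_\alpha(x)|=m\alpha\,|x|^{-1}\bigl(-\log|x|\bigr)^{-(m-1)\alpha-1}
\]
near the origin, and the polar substitution $t=-\log s$ converts $\int_{K_r}|B_\alpha|^d\,dx$ into $C_d(m\alpha)^d\int_{-\log r}^{\infty}t^{-d[(m-1)\alpha+1]}\,dt$, which converges for every $\alpha>0$ whenever $d\geq 2$ and behaves, for small $\alpha$, like $\alpha^d(-\log r)^{1-d}$. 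Hence $\|B_\alpha\|_{L^d(K_r)}\lesssim \alpha\,(-\log r)^{-(d-1)/d}$, and since $|B_\alpha|$ is radially decreasing near the origin, the same bound controls $\|B_\alpha\|_{L^d((x_0+K_r)\cap K_1)}$ at any center $x_0$ (enclose the shifted ball in a concentric ball around the origin, together with a routine smooth-tail contribution). When $m>1$ one may alternatively use the simpler power profile $u_\alpha(x)=|x|^\alpha$, for which $|B_\alpha|=m\alpha|x|^{(m-1)\alpha-1}$ and an explicit computation yields the same decay.

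Choosing $\alpha_n\to 0^+$ now makes $\|B_n\|_{L^d(K_1)}\to 0$, and the uniform local control
\[
\varrho_{B_n}(r)\leq C_{d,m}\bigl(\sup_n\alpha_n\bigr)(-\log r)^{-(d-1)/d}=:\omega(r)
\]
holds with a single modulus of continuity $\omega$. The failure of any common modulus for $\{u_n\}$ is then immediate: for any fixed $\delta\in(0,1/4)$, $u_n(0)=0$ while $u_n(\delta e_1)=(\log(1/\delta))^{-\alpha_n}\to 1$ as $\alpha_n\to 0$, so no $\omega_0$ satisfying $\omega_0(0^+)=0$ can uniformly bound $|u_n(\delta e_1)-u_n(0)|$.

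The remaining verifications for Definition~\ref{D:weak_sol} are routine: $u_n\in L^\infty(K_1)$ by construction, $\nabla u_n^m\in L^2(K_1)$ via the same polar substitution, $u_nB_n=-\nabla u_n^m\in L^2$, and the pointwise cancellation $\nabla u_n^m+u_nB_n\equiv 0$ on $K_1\setminus\{0\}$, from which the weak formulation $\int(\nabla u_n^m+u_nB_n)\cdot\nabla\phi\,dx\,dt=0$ holds trivially for every admissible $\phi$ (the singleton $\{0\}$ has zero $W^{1,2}$-capacity in $d\geq 2$). The only genuinely nontrivial step is the uniform polar-coordinate estimate producing the explicit modulus $\omega(r)\sim(-\log r)^{-(d-1)/d}$; the extreme slowness of this decay is consistent with, and indeed matches, the very weak continuity conclusion of Theorem~\ref{main cri}.
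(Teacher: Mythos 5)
Your construction is correct and rests on the same underlying mechanism as the paper's: choose a radial potential $\Phi$ and a profile $u$ so that $u\,\nabla\bigl(\tfrac{m}{m-1}u^{m-1}+\Phi\bigr)\equiv 0$, which makes the flux $\nabla u^m+uB$ vanish identically (with $B=\nabla\Phi$), so the stationary equation is trivially a weak identity. Where you differ is in the explicit profile: the paper uses a piecewise-defined $\phi_N$ built from $\ln\ln(1/|x|)$, truncated between radii $1/(10N)$ and $1/N$ and normalized by $\ln\ln N$, producing bump-shaped solutions of height $\sim 1$ with support shrinking like $1/\ln\ln N$; you instead use the single closed-form profile $u_\alpha(x)=(-\log|x|)^{-\alpha}$ with $\alpha\to 0^+$, producing dip-shaped solutions with $u_\alpha(0)=0$ and $u_\alpha\to 1$ pointwise away from the origin. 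Both are valid; your formula is cleaner and the loss-of-continuity argument is immediate, while the paper's version keeps the solutions compactly supported inside a shrinking ball (which some readers may find more visually suggestive of the mechanism). Your decay $\varrho_{B_\alpha}(r)\lesssim\alpha\,(-\log r)^{-(d-1)/d}$ matches the paper's modulus $\omega(r)\sim(\ln(1/r))^{-(d-1)/d}$. One small imprecision: the parenthetical ``enclose the shifted ball in a concentric ball around the origin'' only covers $|x_0|\leq r$; for general $x_0$ the clean justification is the radial rearrangement inequality — since $|B_\alpha|$ is radially nonincreasing on $K_{1/4}$, $\int_{x_0+K_r}|B_\alpha|^d\,dx\leq\int_{K_r}|B_\alpha|^d\,dx$ — which you do allude to via ``radially decreasing'' but should state as such. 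The capacity remark is unnecessary: since $\nabla u^m+uB=0$ a.e.\ and both terms lie in $L^2(K_1)$, the weak identity $\int(\nabla u^m+uB)\cdot\nabla\phi\,dx=0$ holds for every test function without any removable-singularity argument.
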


Here to illustrate the main idea, we choose not to state our theorem in the form the dynamic solutions, but just simple stationary solutions.
\begin{proof}

For each $N\geq 4$, let $\phi_N:[0,\infty)\to \mathbb{R}$ be such that
\begin{itemize}
\item[1.] \begin{equation*}
    \phi_N(r)=\left\{\begin{aligned}
    &\ln\ln \frac{1}{r}& \text{ when }&  r\in (\frac{1}{N},\frac{1}{e})\\
& \phi_N(r)=2\ln\ln N &  \text{ when }&  r \in [ 0,\frac{1}{10N});
    \end{aligned}\right.
\end{equation*}

\item[2.] $ -\frac{N}{\ln N}\leq \frac{d}{dr}\phi_N\leq 0$ when $r\leq \frac{1}{e}$ and $| \frac{d}{dr}\phi_N|\leq 2e$ when $r\geq \frac{1}{e}$.
\end{itemize}
It is not hard to see such $\phi_N$ exists for each $N\geq 4$.

Define
\[\Phi_N(x):=-\frac{\phi_N(|x|)}{\ln\ln N}\]
and then it is trivial to check that $\nabla\Phi_N$ is locally uniformly bounded in $L^d(\R^d)$.

In addition, we show that the modulus of the local $L^d$ norm of the vector field $\nabla\Phi_N$ is uniformly bounded by one modulus of continuity $\omega$ for all $N$.
Since $\Phi_N$ is time-independent and mostly concentrated near the origin, we have
\begin{align*}
    \varrho_N(r):=\varrho_{(\nabla \Phi_N)}(r)=\sup_{x_0\in \R}\|\nabla \Phi_N\|_{L^d(K_r(x_0))}=\|\nabla \Phi_N\|_{L^d(K_r(0))}.
\end{align*}
Note that
\[|\nabla \Phi_N|=\frac{m}{m-1}|\phi_N'|/\ln\ln N.\]
Therefore by definition, $\varrho_N(r)=0$ when $r\leq \frac{1}{10N}$.
When $\frac{1}{10N}\leq r\leq \frac{2}{N}$, we have
\begin{equation*}
    \begin{aligned}
 \varrho_N(r)^d&\lesssim \int_{0}^r |\phi'_N(s)/\ln\ln N|^{d}s^{d-1}ds\\
 &\lesssim (\ln\ln N )^{-d}\int_0^r (\frac{N}{\ln N})^{d}s^{d-1}ds\quad \text{(by property 2 of $\phi_N$)}\\
 &\approx (\ln\ln N \ln N )^{-d}(Nr)^d\\
 &\leq C(d,m)(\ln\ln \frac{1}{r} \ln \frac{1}{r})^{-d}\quad \text{( since $r\geq\frac{1}{10N}$)}.
    \end{aligned}
\end{equation*}
When $\frac{2}{N}\leq r\leq \frac{1}{e}$, by property 1,
\begin{align*}
    \varrho^d_N(r)&\lesssim (\ln\ln N \ln N)^{-d}+\int_{\frac{2}{N}}^{r} |\frac{1}{s\ln s}|^ds^{d-1}ds \\
    &\lesssim (\ln\ln N \ln N)^{-d}+(d-1) |\ln s|^{1-d}\big|_{\frac{2}{N}}^r \\
    &\leq C({d,m})  (\ln\ln \frac{1}{r} \ln \frac{1}{r})^{-d}+C({d,m})(\ln \frac{1}{r})^{-d+1}.
\end{align*}
Lastly when $r\geq \frac{1}{e}$, since $\phi_N'$ is bounded,
\[\varrho^d_N(r)\leq \varrho^d_N(\frac{1}{e})+C r^d.\]

From the above computations, if defining
\[\omega^d(r):=C (\ln\ln \frac{1}{r} \ln \frac{1}{r})^{-d}+C(\ln \frac{1}{r})^{-d+1}+Cr^d\]
for some $C$ large enough only depending on $d,m$, we conclude that $\varrho_{N}(r)$ is indeed bounded by $\omega(r)$, a modulus of continuity independent of $N$.

\medskip

Now we are left to show that the corresponding solutions (to the equation with drift $\Phi_N$) do not share any modulus of continuity.
We start with the case when $m>1$. Notice that
\[u_N:=\frac{m-1}{m}\left(\frac{\phi_N(|x|)-\ln\ln\ln\ln N}{\ln\ln N}\right)_+^\frac{1}{m-1}\]
solves the stationary equation
\[\nabla\cdot\left( u_N \left(\nabla \left(\frac{m}{m-1} u_N^{m-1}+ \Phi_N\right)\right)\right)=\Delta (u_N^m)+\nabla\cdot (u_N\nabla \Phi_N)=0.\]
Also by definition
\[u_N(0)=\left(\frac{2\ln\ln N-\ln\ln\ln\ln N}{\ln\ln N}\right)^\frac{1}{m-1}\sim 1,\]
while it is only supported inside $K_{R}(0)$ with $R=\frac{1}{\ln\ln N}$. These two show that the sequence of bounded stationary solutions $u_N$ can not share any modulus norm of continuity.

When $m=1$, note that
\[u_{N}:=exp\left(\frac{\phi_N(|x|)-\ln\ln\ln\ln N}{\ln\ln N}\right)\]
is one stationary solution to
\[\nabla \cdot\left(u_N \nabla \left(\ln u_N+\Phi_N\right)\right)=\Delta u_N+\nabla\cdot (u_N\nabla \Phi_N)=0.\]
But the following two hold
\[u_N(0)=e^2+O(\ln\ln\ln\ln N/\ln\ln N),\]
\[u_N(\frac{1}{\ln\ln N})=exp(0)=1,\]
which again give the loss of uniform continuity.

\end{proof}

\begin{bibdiv}
   \begin{biblist}


   \bib{Bouss}{article}{
   author={Boussinesq, J.},
   title={Recherches the\'{e}oriques sur l'\'{e}coulement des nappes d'eau infiltr\'{e}s dans le sol et sur le d\'{e}bit de sources},
   journal={Comptes Rendus Acad. Sci. / J. Math. Pures Appl.},
   volume={10},
   date={1903/04},
   pages={5--78},
}

    \bib{CalCar}{article}{
   author={Calvez, Vincent},
   author={Carrillo, Jos\'{e} A.},
   title={Volume effects in the Keller-Segel model: energy estimates
   preventing blow-up},
   language={English, with English and French summaries},
   journal={J. Math. Pures Appl. (9)},
   volume={86},
   date={2006},
   number={2},
   pages={155--175},
}

   \bib{CHKK}{article}{
   author={Chung, Yun-Sung},
   author={Hwang, Sukjung},
   author={Kang, Kyungkeun},
   author={Kim, Jaewoo},
   title={H\"{o}lder continuity of Keller-Segel equations of porous medium type
   coupled to fluid equations},
   journal={J. Differential Equations},
   volume={263},
   date={2017},
   number={4},
   pages={2157--2212},
}

\bib{DB82}{article}{
   author={DiBenedetto, Emmanuele},
   title={Continuity of weak solutions to certain singular parabolic equations},
   journal={Ann. Mat. Prar Appl.},
   volume={4},
   date={1982},
   number={CXXX},
   pages={131--176},
}

\bib{DB83}{article}{
   author={DiBenedetto, Emmanuele},
   title={Continuity of weak solutions to a general porous medium equation},
   journal={Indiana Univ. Math. J.},
   volume={32},
   date={1983},
   number={1},
   pages={83--118},
}

\bib{DB86}{article}{
   author={DiBenedetto, E.},
   title={A boundary modulus of continuity for a class of singular parabolic
   equations},
   journal={J. Differential Equations},
   volume={63},
   date={1986},
   number={3},
   pages={418--447},
}

 \bib{DB93}{book}{
   author={DiBenedetto, E.},
   title={Degenerate Parabolic Equations},
   series={Universitext},
   publisher={Springer-Verlag},
   place={New York},
   date={1993},
   pages={xvi+387},
    }

\bib{DBGiVe06}{book}{
   author={DiBenedetto, Emmanuele},
   author={Gianazza, Ugo},
   author={Vespri, Vincenzo},
   title={Harnack's inequality for degenerate and singular parabolic
   equations},
   series={Springer Monographs in Mathematics},
   publisher={Springer, New York},
   date={2012},
}

\bib{FV}{article}{
   author={Friedlander, Susan},
   author={Vicol, Vlad},
   title={Global well-posedness for an advection-diffusion equation arising
   in magneto-geostrophic dynamics},
   journal={Ann. Inst. H. Poincar\'{e} Anal. Non Lin\'{e}aire},
   volume={28},
   date={2011},
   number={2},
   pages={283--301},
}

\bib{KellerSegel}{article}{
   author={Keller, E. F.},
   author={Segel, L. A.},
   title={Initiation of slime mold aggregation viewed as an instaility},
   journal={J. Theor. Biol.},
   volume={26},
   date={1970},
   pages={399--415},
}

\bib{GSV}{article}{
   author={Gianazza, Ugo},
   author={Surnachev, Mikhail},
   author={Vespri, Vincenzo},
   title={A new proof of the H\"{o}lder continuity of solutions to $p$-Laplace
   type parabolic equations},
   journal={Adv. Calc. Var.},
   volume={3},
   date={2010},
   number={3},
   pages={263--278},
}

\bib{Kowal}{article}{
   author={Kowalczyk, R.},
   title={Preventing blow-up in a chemotaxis model},
   journal={J. Math. Anal. Appl.},
   volume={305},
   date={2005},
   number={2},
   pages={566--588},
}

\bib{LU}{article}{
   author={Lady\v{z}enskaja, O. A.},
   author={Ural\cprime ceva, N. N.},
   title={A boundary-value problem for linear and quasi-linear parabolic
   equations},
   language={Russian},
   journal={Dokl. Akad. Nauk SSSR},
   volume={139},
   date={1961},
   pages={544--547},
   translation={
      journal={Soviet Math. Dokl.},
      volume={2},
      date={1961},}
}


\bib{Lei}{article}{
   author={Leibenson, L. S.},
   title={General problem of the movement of a compressible fluid in a
   porous medium},
   language={Russian., with English summary},
   journal={Bull. Acad. Sci. URSS. S\'{e}r. G\'{e}ograph. G\'{e}ophys. [Izvestia Akad.
   Nauk SSSR]},
   volume={9},
   date={1945},
   pages={7--10},
}

\bib{Lieberman}{book}{
   author={Lieberman, Gary M.},
   title={Second order parabolic differential equations},
   publisher={World Scientific Publishing Co., Inc., River Edge, NJ},
   date={1996},
   pages={xii+439},
}

\bib{Muskat}{book}{
   author={Muskat, M.},
   title={The Flow of Homogeneous Fluids Through Porous Media},
   publisher={McGraw-Hill, New York},
   date={1937},
}

   \bib{KZ}{article}{
   author={Kim, Inwon},
   author={Zhang, Yuming Paul},
   title={Regularity properties of degenerate diffusion equations with
   drifts},
   journal={SIAM J. Math. Anal.},
   volume={50},
   date={2018},
   number={4},
   pages={4371--4406},
}


\bib{NU11}{article}{
   author={Nazarov, A. I.},
   author={Ural\cprime tseva, N. N.},
   title={The Harnack inequality and related properties of solutions of
   elliptic and parabolic equations with divergence-free lower-order
   coefficients},
   language={Russian, with Russian summary},
   journal={Algebra i Analiz},
   volume={23},
   date={2011},
   number={1},
   pages={136--168},
   translation={
      journal={St. Petersburg Math. J.},
      volume={23},
      date={2012},
      number={1},
      pages={93--115},
   },
}

\bib{Osada}{article}{
   author={Osada, Hirofumi},
   title={Diffusion processes with generators of generalized divergence
   form},
   journal={J. Math. Kyoto Univ.},
   volume={27},
   date={1987},
   number={4},
   pages={597--619},
}

\bib{Safanov}{article}{
   author={Safonov, M. V.},
   title={Non-divergence elliptic equations of second order with unbounded
   drift},
   conference={
      title={Nonlinear partial differential equations and related topics},
   },
   book={
      series={Amer. Math. Soc. Transl. Ser. 2},
      volume={229},
      publisher={Amer. Math. Soc., Providence, RI},
   },
   date={2010},
   pages={211--232},
}


\bib{SSSZ}{article}{
   author={Seregin, Gregory},
   author={Silvestre, Luis},
   author={\v{S}ver\'{a}k, Vladim\'{i}r},
   author={Zlato\v{s}, Andrej},
   title={On divergence-free drifts},
   journal={J. Differential Equations},
   volume={252},
   date={2012},
   number={1},
   pages={505--540},
}

\bib{SVZ}{article}{
   author={Silvestre, Luis},
   author={Vicol, Vlad},
   author={Zlato\v{s}, Andrej},
   title={On the loss of continuity for super-critical drift-diffusion
   equations},
   journal={Arch. Ration. Mech. Anal.},
   volume={207},
   date={2013},
   number={3},
   pages={845--877},
}

\bib{Tuval}{article}{
   author={Tuval, I.},
   author={Cisneros, L.},
   author={Dombrowski, C.},
   author={Wolgemuth, C.},
   author={Kessler, J.},
   author={Goldstein, R.},
   title={Bacterial swimming and oxygen transport near contact lines},
   journal={PNAS},
   volume={102},
   date={2005},
   number={7},
   pages={2277--2282},
}

 \bib{Va}{book}{
   author={V\'{a}zquez, Juan Luis},
   title={The porous medium equation},
   series={Oxford Mathematical Monographs},
   note={Mathematical theory},
   publisher={The Clarendon Press, Oxford University Press, Oxford},
   date={2007},
   pages={xxii+624},
}

\bib{Zeldovich}{book}{
   author={Zel'dovich, Ya. B.},
   author={Raizer, Yu. P.},
   title={Physics of Shock Waves and High-Temperature Hydrodynamic Phenomena},
   volume={II},
   publisher={Academic Press, New York},
   date={1966},
}

\bib{Zhang}{article}{
   author={Zhang, Qi S.},
   title={A strong regularity result for parabolic equations},
   journal={Comm. Math. Phys.},
   volume={244},
   date={2004},
   number={2},
   pages={245--260},
}

   \end{biblist}
\end{bibdiv}

\end{document}